\tikzstyle{box1} = [rectangle, rounded corners, minimum width=4cm, minimum height=1.1cm,text centered, draw=black, fill=blue!5]
\tikzstyle{box2} = [rectangle, rounded corners, minimum width=4.5cm, minimum height=1.1cm,text centered, draw=black, fill=red!5]
\tikzstyle{box3} = [rectangle, rounded corners, minimum width=4.5cm, minimum height=1.1cm,text centered, draw=black, fill=green!5]
\tikzstyle{arrow} = [thick,->,>=stealth]
\newtheorem{theorem}{Theorem}[section]
\newtheorem*{variance1}{Principle of Optimal Variance Reduction (V1)}
\newtheorem*{variance2}{Principle of Optimal Variance Reduction (V2)}
\newtheorem{proposition}{Proposition}[section]
\newtheorem{lemma}{Lemma}[section]
\theoremstyle{plain}
\theoremstyle{definition}
\newtheorem{definition}{Definition}[section]
\newtheorem{assumption}{Assumption}[section]
\newcommand*\mymatrixbraceright[4][m]{
    \draw[mymatrixbrace] (#1.west|-#1-#3-1.south west) -- node[left=2pt] {#4} (#1.west|-#1-#2-1.north west);
}
\DeclareMathOperator*{\argmin}{arg\,min}
\definecolor{blizzardblue}{rgb}{0.67, 0.9, 0.93}
\definecolor{babyblueeyes}{rgb}{0.63, 0.79, 0.95}
\definecolor{darklavender}{rgb}{0.45, 0.31, 0.59}
\definecolor{darkmagenta}{rgb}{0.55, 0.0, 0.55}
\definecolor{deepskyblue}{rgb}{0.0, 0.75, 1.0}
\definecolor{frenchblue}{rgb}{0.0, 0.45, 0.73}
\title{Invariant Measures for Data-Driven Dynamical System Identification: Analysis and Application}
\author[1]{Jonah Botvinick-Greenhouse}
\affil[1]{\small\textit{Center for Applied Mathematics, Cornell University, Ithaca, NY 14853}}
\begin{document}
\maketitle

\begin{abstract}
 We propose a novel approach for performing dynamical system identification, based upon the comparison of simulated and observed physical invariant measures. While standard methods adopt a Lagrangian perspective by directly treating time-trajectories as inference data,  we take on an Eulerian perspective and instead seek models fitting the observed global time-invariant statistics. With this change in perspective, we gain robustness against pervasive challenges in system identification including noise, chaos, and slow sampling. In the first half of this paper, we pose the system identification task as a partial differential equation (PDE) constrained optimization problem, in which synthetic stationary solutions of the Fokker--Planck equation, obtained as fixed points of a finite-volume discretization, are compared to physical invariant measures extracted from observed trajectory data. In the latter half of the paper, we improve upon this approach in two crucial directions. First, we develop a Galerkin-inspired modification to the finite-volume surrogate model, based on data-adaptive unstructured meshes and Monte--Carlo integration, enabling the approach to efficiently scale to high-dimensional problems. Second, we leverage Takens' seminal time-delay embedding theory to introduce a critical data-dependent coordinate transformation which can guarantee unique system identifiability from the invariant measure alone. This contribution resolves a major challenge of system identification through invariant measures, as systems exhibiting distinct transient behaviors may still share the same time-invariant statistics in their state-coordinates. Throughout, we present comprehensive numerical tests which highlight the effectiveness of our approach on a variety of challenging system identification tasks.
\end{abstract}


\section{Introduction}
Constructing data-driven models of dynamical systems is a fundamental challenge in many physics and engineering applications, including fluid flow surrogate modeling \cite{lee2021parameterized}, ion thruster model calibration \cite{greve2019data}, gravitational wave parameter estimation \cite{RevModPhys.94.025001},  and weather prediction  \cite{bochenek2022machine}. Such models can facilitate insights into the underlying physics of complex systems and serve as valuable tools for performing state prediction and control \cite{MONTANS2019845}. The task of system identification is typically formulated as an optimization procedure, where empirical time-series measurements are used to calibrate unknown parameters in a family of candidate models, often expressed as differential equations~\cite{baake1992fitting,michalik2009incremental}. Sparse sampling, noisy measurements, partial observations, and chaotic dynamics represent common challenges in system identification for discovering physically informative models.


Popular approaches for modeling dynamical trajectories typically adopt a \textit{Lagrangian perspective} and seek a pointwise matching with either the observed data or its approximate state derivatives. The class of methods termed Sparse Identification of Nonlinear Dynamics (SINDy) computes divided differences along observed trajectories and uses sparse regression to fit a linearly-parameterized vector field consisting of candidate terms from low-degree function bases, e.g., global polynomial or Fourier bases \cite{brunton2016discovering}. Shooting-type methods and neural differential equations instead attempt to recover a vector field which simulates trajectories pointwisely matching the observed data \cite{baake1992fitting,michalik2009incremental,linot2023stabilized}. In their simplest form, these approaches parameterize the unknown vector field $v = v_{\theta}$ by $\theta \in \Theta\subseteq \mathbb{R}^p$ and solve the optimization problem, e.g.,
\begin{equation}\label{eq:optimization}
    \min_{\theta\in \Theta}\mathcal{J}(\theta),\qquad \mathcal{J}(\theta):= \frac{1}{N}\sum_{i=0}^{N-1} |x^*(t_i) - x_{\theta}(t_i)|^2,
\end{equation}
where $\{x^*(t_i)\}_{i=0}^{N-1}$ is the observed time-series data and $x_{\theta}(\cdot)$ solves the initial value problem $\dot{x}_{\theta} = v_{\theta}(x_{\theta})$ with $x_{\theta}(t_0) = x^*(t_0)$. When the observed trajectory $\{x^*(t_i)\}_{i=0}^{N-1}$ is long, variations based upon multiple-shooting, which partition the loss \eqref{eq:optimization} along many sub-trajectories, have been shown to improve the optimization landscape \cite{peifer2007parameter}.

While the approaches described above can lead to effective model discovery in many scenarios, their use is also limited by relatively strong data requirements. Noisy measurements and slow sampling of the observed data prevent SINDy from accurately estimating state derivatives, which directly affects the quality of the velocity reconstruction. Approaches that minimize the pointwise errors between simulated and observerved trajectories following \eqref{eq:optimization} are similarly prone to overfitting noise in the data. When the underlying dynamics are chaotic, it is impossible to distinguish between modeling errors and intrinsic dynamic instability, which is yet another source of difficulty when directly using trajectories to reconstruct the velocity. Lastly, under slow sampling conditions, i.e., when $\Delta t = t_{i+1} - t_i$ is large, the optimization landscape given by \eqref{eq:optimization} becomes intractable; see Figure~\ref{fig:initcompare}. All of these phenomena share the nature that small perturbations to the trajectory data can have a large impact on the reconstructed velocity. 

\begin{figure*}[h!]
    \centering
    \includegraphics[width = \textwidth]{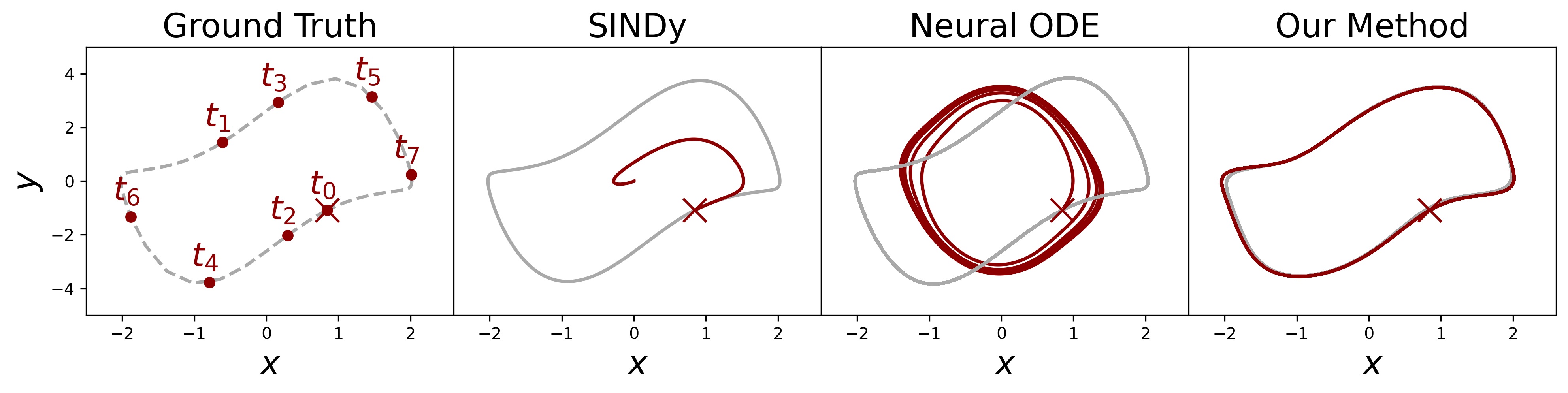}
    \caption{Comparison with the SINDy and the Neural ODE frameworks for reconstructing the velocity from slowly sampled observations. While SINDy and Neural ODE can only reconstruct an accurate model from a quickly sampled trajectory, our approach is robust to slowly sampled data. See Section \ref{sec:numerics} for additional experiment details.}
    \label{fig:initcompare}
\end{figure*}

In contrast with the Lagrangian approach to modeling dynamics, we adopt an \textit{Eulerian perspective} in which models are constructed to match the global time-invariant statistics of observed trajectories \cite{greve2019data,yang2023optimal}. Rather than directly using the observed data $\{x^*(t_i)\}_{i=1}^N$ to reconstruct the velocity, we consider the \textit{occupation measure} $\rho^*$, where for each measurable set $B$,
\begin{equation}\label{eq:occupation}
    \rho^*(B):= \frac{1}{N}\sum_{i=1}^{N}\chi_B(x^*(t_i)),\qquad  \qquad\chi_B(x)=\begin{cases} 1& x\in B\\
    0 & x\not\in B
    \end{cases}.
\end{equation}
Under certain assumptions, the  occupation measure provides an approximation to the system's \textit{physical invariant measure}; see Section \ref{subsec:PM}. Notably, invariant measures can be well-approximated even under slow sampling conditions, and they are resilient to measurement noise \cite{jiang2023training}. Moreover, while nearby trajectories of a chaotic system diverge exponentially quickly, the corresponding occupation measures \eqref{eq:occupation} still converge to the same invariant measure under mild assumptions. Thus,~\eqref{eq:occupation} is robust to many of the data challenges which cause instability  in Lagrangian approaches for parameter identification; e.g.,~\eqref{eq:optimization}. Motivated by these observations, we seek to recover the parameterized velocity $v_{\theta}$ by solving
\begin{equation}\label{eq:opt2}
    \min_{\theta \in \Theta} \mathcal{J}(\theta), \qquad \mathcal{J}(\theta) = \mathcal{D}(\rho^*,\rho_{\varepsilon}(v_{\theta})).
\end{equation}
The formulation~\eqref{eq:opt2} represents an inverse data-matching problem, in which $\mathcal{D}$ denotes a metric or divergence on the space of probability measures and $\rho_{\varepsilon}(v_{\theta})$ is an approximation to the invariant measure of the dynamical system $v_{\theta}$, given some regularization parameter $\varepsilon > 0$; see Section \ref{subsec:FWD}. Whereas $\theta\mapsto \{x_{\theta}(t_i)\}_{i=1}^N$ was the forward model in the Lagrangian formulation \eqref{eq:optimization}, our new forward model is now given by $\theta \mapsto \rho_{\varepsilon}(v_{\theta})$.

Although one could approximate $\rho_{\varepsilon}(v_{\theta})$ by numerically integrating a trajectory and binning the observed states to a histogram \cite{greve2019data}, this approach does not permit simple differentiation of the resulting measure with respect to the parameter $\theta$. When the size of $\theta\in \mathbb{R}^p$ is large, it is practical to use gradient-based optimization methods for solving the optimization problem~\eqref{eq:opt2}, and one has to compute the gradient $\nabla_{\theta} \mathcal{J}$. Following \cite{yang2023optimal}, we instead view $\rho_{\varepsilon}(v_{\theta})$ as the dominant eigenvector of a regularized Markov matrix $M_{\varepsilon}(v_{\theta})\in \mathbb{R}^{n\times n}$ originating from an upwind finite volume discretization of a Fokker--Planck equation. Thus, the formulation~\eqref{eq:opt2} is translated into a \textit{PDE-constrained optimization} problem, in which the gradient $\nabla_{\theta}\mathcal{J}$ can be efficiently computed using the adjoint-state method or automatic differentiation.

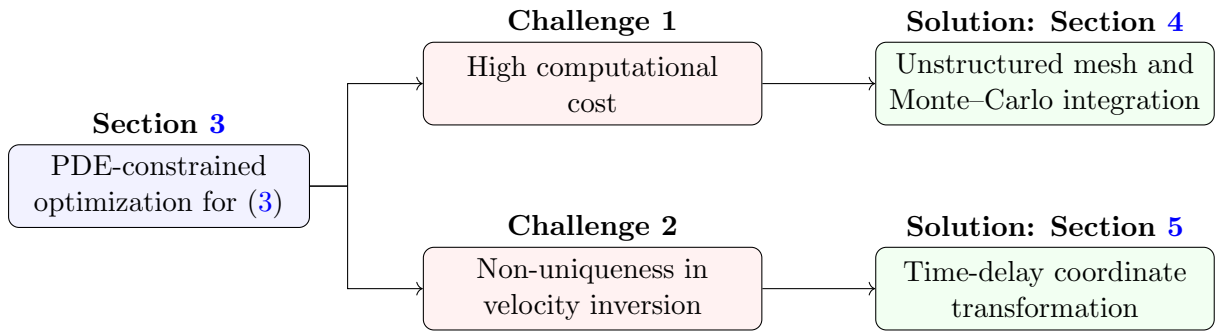
\begin{figure}[h!]
    \begin{tikzpicture}[node distance=2cm]
    \node[align=center] (textblock) {\textbf{Section \ref{sec:PDE_IM}}};
\node[draw, align = center,below =.01cm of textblock] (start) [box1] {PDE-constrained \\ optimization  for \eqref{eq:opt2}};
    \node[draw, align=center, above right=.25cm and 1.5cm of start] (red1) [box2] {High computational\\
    cost};
    \node[draw, align=center, below right=.25cm and 1.5cm of start] (red2) [box2] {Non-uniqueness in \\
    velocity inversion};
\node[draw, align=center, right=1.5cm of red1] (green1) [box3] {Unstructured mesh and\\
Monte--Carlo integration};
    \node[draw, align=center, right=1.5cm of red2] (green2) [box3] {Time-delay coordinate\\
    transformation};
        \node[right=0.5cm of start, draw=none] (invisible) {};

  \node[align=center, above = .01cm of green1] (title2) {\textbf{Solution: Section \ref{sec:mesh}}};
  \node[align=center, above = .01cm of green2] (title3) {\textbf{Solution: Section \ref{sec:delay_IM}}};
    \node[align=center, above = -0.05cm of red1] (c1) {\textbf{Challenge 1}};
      \node[align=center, above = -0.05cm of red2] (c2) {\textbf{Challenge 2}};
\draw (invisible.west) -| (start.east);
\draw (invisible.west) -| (start.east);

\draw[->] (invisible.west) |- (red1.west);
    \draw[->] (invisible.west) |- (red2.west);
    \draw[->] (red1.east) -- (green1.west);
    \draw[->] (red2.east) -- (green2.west);

    \end{tikzpicture}
    \caption{ Flowchart describing the paper's main sections and techniques.}
    \label{fig:enter-label}
\end{figure}
In Section \ref{sec:PDE_IM}, we study invariant measure-based system identification with a large-scale parameter space applied to real data. We exhibit competitive performance of our proposed approach~\eqref{eq:opt2} against standard methods for performing the velocity reconstruction, e.g., \eqref{eq:optimization}, under challenging data scenarios; see Figure \ref{fig:initcompare}. We also showcase the effectiveness of our method on real-world Hall-effect thruster dynamics, and we use the reconstructed Fokker--Planck dynamics to perform uncertainty quantification for the predicted dynamics. While the framework presented in Section~\ref{sec:PDE_IM} is largely successful, we further improve the approach in two key directions.

First, in Section \ref{sec:mesh} we examine alternative approaches to the construction of the Markov matrix $M_{\varepsilon}(v_{\theta})$ which can accelerate computation of the forward model. While obtaining $M_{\varepsilon}(v_{\theta})$ based on the discretization of the Fokker--Planck differential operator on a uniformly-spaced mesh is efficient for low-dimensional dynamical systems, the approach becomes infeasible when the dimension of the state exceeds 4. In practice, the intrinsic dimension of a system's attractor may be significantly smaller than the ambient dimension of the state space in which the system evolves \cite{sauer1991embedology,farmer1983dimension}. Motivated by this phenomena, we instead use a data-adaptive unstructured mesh with cells $\{C_i\}_{i=1}^n$ concentrated on the attractor of the observed trajectory $\{x^*(t_i)\}_{i=0}^{N-1}.$ While the finite-volume method (FVM) can be viewed as one approach for approximating the so-called \textit{Perron--Frobenius Operator} (PFO) \cite{norton2018numerical}, in Section~\ref{sec:mesh} we instead obtain $M_{\varepsilon}$ as a regularized Galerkin projection of the PFO onto the indicator functions supported on the cells~\cite{Stefan, li1976finite}, which we approximate via Monte--Carlo integration. Based on our error analysis we propose a method for cell construction based upon the principle of \textit{optimal variance reduction}, and we prove the convergence of our approach to the true Perron--Frobenius operator as all discretization and regularization parameters are refined; see Theorem~\ref{thm:convergence}. Due to the data-efficiency of our unstructured mesh approach, it now becomes computationally feasible to compare entire Markov matrices during optimization, rather than simply the dominant eigenvectors, as in~\eqref{eq:opt2}. We conclude Section \ref{sec:mesh} by considering a variant of \eqref{eq:opt2}, and we demonstrate its efficiency applied to the 30-dimensional Lorenz-96 model \cite{karimi2010extensive}.

\begin{figure*}[h!]
    \centering
    \includegraphics[width = .9\textwidth]{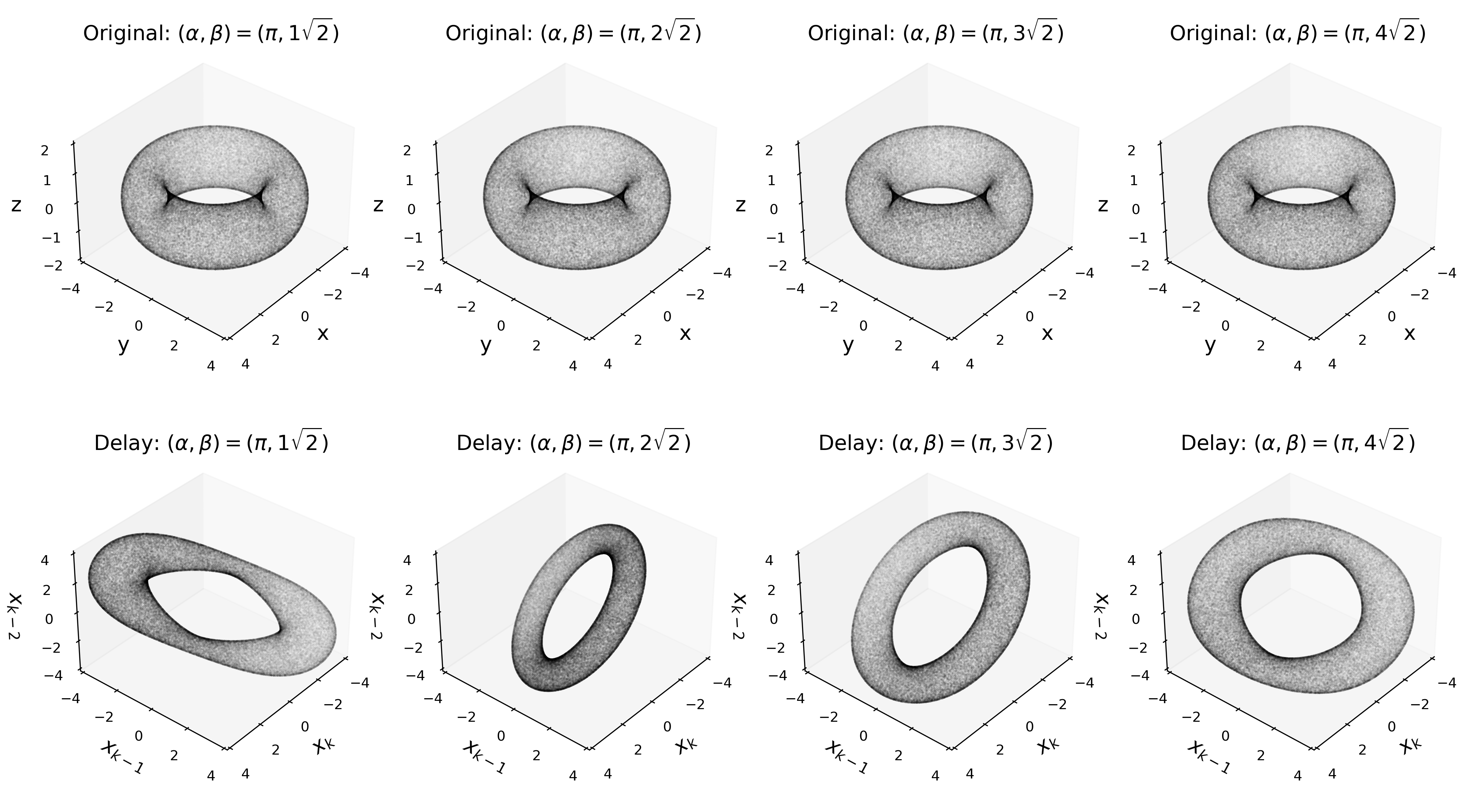}
    \vspace{-.5cm}
    \caption{Delay-coordinate invariant measures improve identifiability of the torus rotation $ T_{\alpha,\beta}(z_1,z_2)= (z_1+\alpha,z_2+\beta) \pmod{ 1}.$ While different choices of $(\alpha,\beta)$ all lead to the same state-coordinate invariant measure (top row), the systems can be distinguished by their delay-invariant measures (bottom row).  }
    \label{fig:torus}
\end{figure*}
Second, the inverse problem \eqref{eq:opt2} suffers from a lack of uniqueness, as systems with different transient dynamics may still exhibit identical asymptotic statistical behavior. Motivated by the delay-coordinate transformation from Takens’ seminal embedding theory~\cite{sauer1991embedology,Takens1981DetectingSA}, we propose using \textit{invariant measures in time-delay coordinates} for dynamical system identification. As illustrated in Figure~\ref{fig:torus}, systems that share the same invariant measure in their original state-coordinates can display distinct invariant measures in time-delay coordinates, offering greater insight into their underlying dynamics. In Section \ref{sec:delay_IM}, we present two key results. First, we prove that if two dynamical systems share the same invariant measure in time-delay coordinates, they are topologically conjugate on the support of their invariant measures; see Theorem \ref{thm:1}. Second, we show that the non-uniqueness from the conjugacy relation can be eliminated by using multiple delay-coordinate invariant measures derived from different observables; see Theorem \ref{thm:2}. A summary of these results is depicted in the flowchart in Figure~\ref{fig:flowchart}. Our work is the first to provide theoretical guarantees supporting the use of delay-coordinate invariant measures for system identification.

The structure of the paper is as follows. Section \ref{sec:background} contains a review of essential background material and related work. The PDE-constrained optimization approach \eqref{eq:opt2} for reconstructing the velocity from noisy measurements is presented in Section~\ref{sec:PDE_IM}. The data-adaptive mesh reformulation, which improves upon the previous approach's computational efficiency, is then explored in Section~\ref{sec:mesh}. In Section \ref{sec:delay_IM}, we pose the inverse problem \eqref{eq:opt2} in a time-delay coordinate system, where we argue that the inverse solution is unique up to a topological conjugacy.  Conclusions then follow in Section~\ref{sec:conclusions}.

\section{Background}\label{sec:background}
\subsection{Discrete, Continuous, and Stochastic Dynamics}\label{subsec:dynamics}
A discrete-time dynamical system is a self-mapping $T:X\to X$ of a set $X$, commonly referred to as the state-space. The dynamics on $X$ are given by iterated compositions with the map $T$. That is, if one fixes an initial condition $x\in X$, then the corresponding trajectory is given by $\{T^k(x)\}_{k=1}^{\infty}$, where $k\in\mathbb{N}$ indexes time. When the state-space has smooth structure, e.g., an open subset of Euclidean space or a Riemannian manifold, one can consider the continuous-time system $\dot{x} = v(x)$, where $v$ is a vector field on $X$ with sufficient regularity to guarantee well-posedness of the initial value problem. Given an initial condition $x\in X$ its trajectory under the continuous dynamics is $\{f_t(x)\}_{t\geq 0},$ where $f_t:X\to X$ is the time-$t$ flow map of the vector field $v$. Lastly, we may consider the It\^o stochastic differential equation (SDE)
\begin{equation}\label{ES}
 dX_t = v(X_t) dt+\sigma(X_t) dW_t, \hspace{1cm} X_0 = x,
\end{equation}
where $W_t$ is a Brownian motion, $v$ is the velocity, and $\sigma$ determines the diffusion matrix $\Sigma (x) = \frac{1}{2} \sigma(x) \sigma(x)^\top.$ In this case, a single trajectory $\{X_t\}_{t\geq 0}$ of the system  is comprised of random variables following \eqref{ES}. Note that a stochastic system with $\sigma = 0$ reduces to the standard continuous time system $\dot{x} = v(x)$. Moreover, by considering the time-$t$ flow of a vector field for fixed $t > 0$, i.e., $f_t:X\to X$, one obtains a discrete-time dynamical system. 

For simplicity, we present the rest of the background material in the setting of discrete-time dynamical systems, though  there are analogous formulations for continuous-time and stochastic dynamics.
\subsection{The Perron--Frobenius Operator}\label{subsec:perron}

The Perron--Frobenius Operator (PFO) plays a central role in analyzing the statistical properties of dynamical systems. While the state-space dynamics given by a discrete-time map $T$ are in general non-linear, the PFO provides an equivalent description of these dynamics which is linear and infinite dimensional. In particular, the PFO acts on the space of probability densities over the state-space and is adjoint to the well-known Koopman operator \cite{lasota2013chaos}. The invariant densities of a dynamical system thus correspond to the fixed points of the PFO, which make it an essential tool in both our analysis of and algorithm development  for invariant measure-based system identification. 

Given a measure space $(X,\mathscr{B},\nu)$, a measurable map $T:X\to X$ is said to be \textit{non-singular} with respect to $\nu$ if for any $B\in\mathscr{B}$ which satisfies $\nu(B) = 0$ it also holds that $\nu(T^{-1}(B)) = 0.$ The PFO can now be defined as follows.
\begin{definition}[Perron--Frobenius operator]\label{def:PFO}
    Given a non-singular system  $T:X\to X$, the \textit{Perron--Frobenius operator} $\mathcal{P}:L_{\nu}^1(X)\to L_{\nu}^1(X)$ is uniquely characterized by the relationship
\begin{equation}\label{eq:ulam}
    \int_B \mathcal{P} f d\nu = \int_{T^{-1}(B)} f d\nu ,\qquad \forall B\in \mathscr{B}, \qquad \forall f\in L_{\nu}^1(X).
\end{equation}
\end{definition} 
The PFO is a  \textit{Markov operator} and thus preserves total mass and non-negativity \cite{lasota2013chaos}. That is, if $f\in L_{\nu}^1(X)$ satisfies $f \geq 0$, then $\mathcal{P}f \geq 0$ and $\int_X f d\nu = \int_X \mathcal{P}f d\nu.$ 

If one wishes to use the PFO in practice to describe either the time-evolution of a density or the invariant measures of a dynamical system, a suitable finite-dimensional discretization of the infinite-dimensional operator must be employed \cite{klus2015numerical}. In Section \ref{sec:PDE_IM} we numerically approximate the PFO using a finite-volume discretization of the Fokker--Planck equation, and in Section \ref{sec:mesh} we approximate the PFO via a Galerkin-inspired approach \cite{li1976finite}. 
\subsection{Physical Invariant Measures}\label{subsec:PM}
 Throughout, we will formulate many of our definitions and results in the abstract setting of Polish spaces. A \textit{Polish space} $X$ is a separable completely metrizable topological space. We will denote by $\mathscr{P}(X)$ the space of \textit{probability measures} on a Polish space $X$, which consists of all Borel measures that assign unit measure to $X$. Throughout our results, it will be important to reference the support of a given probability measure, which can intuitively be viewed as the region on which the measure is concentrated. 
\begin{definition}[Support of a measure]\label{def:support}
    If $X$ is a Polish space and $\mu\in \mathscr{P}(X)$ is a Borel probability measure, then the \textit{support} of $\mu$ is the unique closed set $\text{supp}(\mu)\subseteq X$ such that $\mu(\text{supp}(\mu)) = 1$, and for any other closed set $C\subseteq X$ satisfying $\mu(C) = 1$, it holds that $\text{supp}(\mu)\subseteq C$.
\end{definition}
We will frequently need to discuss how probability measures are transformed under the action of a measurable function. This concept is formalized with the notion of a pushforward measure. 
\begin{definition}[Pushforward measure]
    Let $X$ and $Y$ be Polish spaces, let $f:X\to Y$ be Borel measurable, and let $\mu\in \mathscr{P}(X).$ Then, the \textit{pushforward} of $\mu$ is the probability measure $f\# \mu \in \mathscr{P}(Y)$, defined by $(f\# \mu)(B) := \mu(f^{-1}(B))$, for all Borel sets $B\subseteq Y$. 
\end{definition}
We next define the notion of an invariant measure, which is a probability measure that remains unchanged under the pushforward of a given self-transformation $T:X\to X$ and provides a statistical description of its asymptotic behavior.
\begin{definition}[Invariant measure]
    Let $X$ be a Polish space and let $T:X\to X$ be Borel measurable. The probability measure $\mu \in \mathscr{P}(X)$ is said to be \textit{$T$-invariant} (or simply \textit{invariant} when the map $T$ is clear from context) if $T\# \mu = \mu$. 
\end{definition}
If $\mu$ admits a density $f \geq 0$ with respect to the measure $\nu$ used to define the PFO (see \eqref{eq:ulam}), i.e., $d\mu = f d\nu$, then it additionally holds that $\mathcal{P}f = f$. Moreover, the measure $\mu$ is said to be $T$-ergodic if $T^{-1}(B) = B$ implies that $\mu(B) \in \{0,1\}$, i.e., when all invariant sets trivially have either zero or full measure. Given a dynamical system $T:X\to X$, it is natural to ask which initial conditions $x\in X$ produce trajectories $\{T^k(x)\}_{k\in\mathbb{N}}$ that yield empirical measures which are asymptotic to a given invariant measure. Towards this, we now introduce the notion of a basin of attraction.
\begin{definition}[Basin of attraction]\label{def:basin}
    Let $X$ be a Polish space, and let $T:X\to X$ be Borel measurable. The \textit{basin of attraction} of a $T$-invariant measure $\mu\in \mathscr{P}(X)$ with compact support is
   \begin{equation}\label{eq:basin}
       \mathcal{B}_{\mu,T} := \Bigg\{x\in X: \lim_{N\to\infty}\frac{1}{N}\sum_{k=0}^{N-1} \phi(T^k(x)) = \int_X \phi d\mu, \, \forall \phi \in C(X) \Bigg\}.
   \end{equation} 
\end{definition}
 If the space $X$ admits a Lebesgue measure, then $\mu$ is said to be a \textit{physical measure} when the basin of attraction \eqref{eq:basin} has positive Lebesgue measure. From the perspective of experimentalists, physical measures are the relevant invariant measures which can be observed during data collection. Under certain assumptions on the space $X$ and the system $T$, the existence of physical measures is known (see \cite[Theorem~1]{Young2002}), with common examples including attracting fixed points, stable limit cycles, and various chaotic attractors, e.g., the Lorenz-63 system~\cite{luzzatto2005lorenz}.
 
\subsection{Time-Delay Embedding}\label{sec:time_delay}

The technique of time-delay embedding has revolutionized the way in which nonlinear trajectory data is studied across a diverse range of applications, including fluid mechanics \cite{yuan2021flow}, neuroscience~\cite{tajima2015untangling}, and time-series prediction \cite{young2023deep},  to name a few. Rather than observing trajectories $\{T^k(x)\}$ of the full state of a dynamical system, practitioners may only have access to time-series projections of the form $\{y(T^k(x))\}$, for some scalar-valued observable $y.$ In certain situations, the method of time-delays can provide a reconstruction of the original dynamical system, up to topological equivalence, using only this scalar time-series data. 

That is, if the \textit{embedding dimension} $m \in \mathbb{N}$ is chosen sufficiently large, then the state-coordinate trajectory $\{T^k(x)\}$ can be placed into a continuous one-to-one correspondence with the so-called delay-coordinate trajectory $\{(y(x),y(T(x)),\dots,y(T^{m-1}(x))\}$. We remark that many approaches for numerically determining a suitable embedding dimension from time-series data, such that the delay-coordinate dynamics are well-defined, have been explored \cite{cao1997practical}. The theoretical justification of delay-coordinate reconstructions was first given by Takens \cite{Takens1981DetectingSA}, and later generalized by Sauer, Yorke, and Casdagli \cite{sauer1991embedology}, which is the version we recall here.
\begin{theorem}[Fractal Takens' Embedding]\label{thm:takens}
    Let $T:U \to U$ be a diffeomorphism of an open set $U\subseteq \mathbb{R}^n$, $A\subseteq U$ be compact and  $m > 2d$ where $d=\textup{boxdim}(A)$. Suppose that the periodic points of $T$ with degree at most $m$ satisfy Assumption~\ref{assumption:1} on $A$. Then, it holds that $x\mapsto (y(x),y(T(x)),\dots, y(T^{m-1}(x)))$ is injective on $A$, for almost all $y\in C^1(U,\mathbb{R}).$ 
\end{theorem}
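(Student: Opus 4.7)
The plan is to follow the prevalence-based strategy of Sauer, Yorke, and Casdagli. Recall that a property holds for \emph{almost all} $y \in C^1(U,\mathbb{R})$ in the prevalence sense if there exists a finite-dimensional probe subspace $P \subseteq C^1(U,\mathbb{R})$ such that, for every $y$, Lebesgue-almost every perturbation $y + p$ with $p \in P$ enjoys the property. Writing $\Phi_y(x) := (y(x), y(Tx), \dots, y(T^{m-1}x))$, I would establish prevalent injectivity of $\Phi_y|_A$ by decomposing the bad set $\mathcal{E}_y := \{(x_1,x_2) \in A \times A : x_1 \neq x_2,\ \Phi_y(x_1) = \Phi_y(x_2)\}$ into a non-periodic part (pairs not both lying on $T$-orbits of period at most $m$) and a periodic part, and treating each separately.

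For the non-periodic regime, the key is that each fixed pair $(x_1,x_2)$ defines a bounded linear functional $L_{x_1,x_2}(y) := (y(x_1) - y(x_2),\, y(Tx_1) - y(Tx_2),\, \dots,\, y(T^{m-1}x_1) - y(T^{m-1}x_2))$ from $C^1(U,\mathbb{R})$ to $\mathbb{R}^m$. In the non-periodic case the $2m$ orbit points $\{T^i x_j\}_{0 \leq i < m,\,j=1,2}$ are all distinct, so by taking $P$ to consist of bump functions localized on a fine grid in $U$, one can arrange that $L_{x_1,x_2}$ has full rank $m$ on $P$. Then $\{p \in P : L_{x_1,x_2}(y+p) = 0\}$ is a codimension-$m$ affine subspace. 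A Fubini-type argument, integrating over pairs $(x_1,x_2) \in A \times A$ together with the Sauer--Yorke--Casdagli projection estimates phrased in terms of box-counting numbers, shows that the measure in $P$ of perturbations yielding any non-periodic self-intersection is zero whenever $2d < m$; here one uses the product bound $\textup{boxdim}(A \times A) \leq 2d$.

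The periodic part is more delicate because near a period-$p$ orbit with $p \leq m$, the functional $L_{x_1,x_2}$ decouples into local data at only finitely many orbit points, so the global probe-space argument degenerates. This is precisely the role of Assumption~\ref{assumption:1}: it restricts the structure of low-period orbits of $T$ (typically via non-resonance or eigenvalue-distinctness of $DT^p$) so that generic local jets of $y$ at the finitely many periodic orbit points already separate them under $\Phi_y$. A standard transversality argument at these isolated points produces a prevalent subset of $y$'s avoiding periodic self-intersections. Intersecting this with the prevalent subset obtained for non-periodic pairs yields the claim.

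The main technical obstacle will be the fractal dimension counting in the non-periodic case. Because box dimension is not countably stable and behaves worse under products than Hausdorff dimension, the direct transversality reasoning used in Takens' original smooth proof is unavailable. Instead, one has to invoke the $\varepsilon$-net/covering-number estimates of Sauer--Yorke--Casdagli, which convert the bound $\textup{boxdim}(A \times A) \leq 2d < m$ into an effective statement that the set of perturbations in $P$ landing in the fractal bad set has Lebesgue measure zero. Verifying that Assumption~\ref{assumption:1} is precisely strong enough to close the periodic-point gap, while not being so strong as to exclude the typical systems of interest, is the other subtle point.
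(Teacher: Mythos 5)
The paper does not prove this statement. Theorem~\ref{thm:takens} is quoted verbatim from Sauer, Yorke, and Casdagli (the Fractal Delay Embedding Prevalence Theorem, \cite[Theorem~2.7]{sauer1991embedology}); the paper explicitly points to that reference in Section~\ref{sec:time_delay} and in Appendix~\ref{appendix:embed}, and supplies no proof of its own. There is therefore no ``paper's proof'' to compare your proposal against.

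Taken on its own terms, your sketch does capture the two structural ingredients of the Sauer--Yorke--Casdagli argument: a finite-dimensional probe space in the prevalence sense, plus a decomposition of the failure set into pairs involving low-period points and pairs not involving them, with covering-number (box-dimension) estimates controlling the latter. However, one step as written is not correct and would need to be repaired before the argument closes. You claim that in the ``non-periodic'' regime the $2m$ orbit points $\{T^i x_j\}_{0\le i<m,\,j=1,2}$ are automatically distinct, from which you deduce that the linear map $L_{x_1,x_2}$ can be made rank~$m$ on the probe. That implication fails: if $x_2 = T^k(x_1)$ for some $1\le k < m$, then $x_1$ and $x_2$ lie on a common orbit, $m-k$ of the $2m$ orbit points coincide, and yet neither point need be periodic of any period, let alone period $\le m$. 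The correct argument (as in \cite{sauer1991embedology}) stratifies pairs $(x_1,x_2)$ by their full coincidence pattern among forward-orbit points, not merely by whether a low-period orbit is present, and proves for each stratum a lower bound on the rank of $L_{x_1,x_2}$ restricted to the probe that is still large enough, relative to the stratum's box dimension, to make the bad set in the probe space Lebesgue-null. Your rank-$m$ claim should be replaced by that stratified rank analysis. As a smaller point, your remark that box dimension ``behaves worse under products than Hausdorff dimension'' is somewhat backwards for the purpose at hand: the reason SYC work with (upper) box dimension rather than Hausdorff dimension is precisely that box dimension gives directly usable $\varepsilon$-covering counts, and upper box dimension is subadditive under products, which is exactly what makes the $2d<m$ codimension bookkeeping go through.
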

In Theorem \ref{thm:takens}, $\text{boxdim}(\cdot)$ refers to the box-counting dimension of the set $A$, which reduces to the usual notion of intrinsic dimension whenever $A$ is a manifold; see Definition \ref{def:box}. Moreover, Assumption \ref{assumption:1} is a mild technical assumption on the growth of the number of periodic points of $T$ which is standard in the literature. Lastly, the phrase ``almost all"  comes from the mathematical theory of prevalence~\cite{hunt1992prevalence}, indicating that the set of observables $y\in C^1(U,\mathbb{R})$ for which the conclusion of Theorem \ref{thm:takens} does not hold is negligible in a probabilistic sense; see Appendix \ref{appendix:prev}. For further discussion on Theorem~\ref{thm:takens}, see Appendix \ref{appendix:embed}.

\subsection{Related Work}\label{subsec:related}

In recent years, several works have used invariant measures in data-driven dynamical system modeling. For instance, the Wasserstein distance, derived from optimal transport theory, has been used to compare dynamical attractors~\cite{greve2019data,yang2023optimal}. Moreover, the parameter estimation inverse problem has been reformulated as a PDE-constrained optimization, utilizing the stationary solution of the Fokker--Planck equation as a surrogate for the invariant measure~\cite{yang2023optimal,chen2023detecting}. Pseudo-metrics based on harmonic time averages of observables were introduced for system comparison in~\cite{mezic2004comparison}. Invariant measures, along with other quantities related to the long-term statistics,  have been incorporated as regularization terms when training models according to a trajectory-based mean-squared error loss \cite{jiang2023training,schiff2024dyslim}. Other works have explored optimal perturbations to generate desired linear responses in invariant measures~\cite{galatolo2017controlling}. Furthermore, invariant measures and linear response theory have been applied to study variations in climate systems~\cite{hairer2010simple}. Various works have also explored model architectures and loss functions that reconstruct the invariant measure without directly using the statistical properties of trajectories during training~\cite{linot2023stabilized, park2024when}.
\section{A PDE-Constrained Approach to Dynamical System Identification from Invariant Measures}\label{sec:PDE_IM}

In this section, we introduce a novel PDE-constrained optimization framework for identifying dynamical systems from sparse and noisy observations.  In Section \ref{subsec:FWD}, we describe the forward PDE-based surrogate model $v_{\theta}\mapsto \rho_{\varepsilon}(v_{\theta})$ used for modeling a dynamical system's invariant measure. In Section \ref{sec:gradientcalc}, we then combine the adjoint-state method for PDE-based inverse problems with the backpropagation technique to obtain an efficient gradient calculation for a neural network parameterization of the velocity. Section~\ref{sec:numerics} then showcases the effectiveness of our approach across several numerical experiments. 

\subsection{The Forward Model}\label{subsec:FWD}
\subsubsection{From Trajectories to Densities}\label{subsec:densities}
The Fokker--Planck equation provides a PDE description of the probability density $\rho(x,t)$ of the random variable $X_{t}$ following the SDE \eqref{ES}. In particular, the density evolves as  
\begin{equation}\label{FP2}
    \frac{\partial \rho(x,t)}{\partial t} = -\nabla \cdot (\rho(x,t) v(x)) + D  \Delta \rho(x,t), \qquad x\in \mathbb{R}^d,\qquad t > 0,
\end{equation}
where we have assumed a constant diffusion $\Sigma(x) = D I$, where $I$ denotes the identity matrix and $D \geq 0$ is a constant representing the scale of the diffusion. We remark that if $D = 0$, \eqref{FP2} reduces to the so-called continuity equation, which instead models the probability flow of the ODE given by $\dot{x} = v(x).$ Under certain conditions \cite{huang2015steady}, the steady-state solution $\rho(x)$ of~\eqref{FP2} exists and satisfies
\begin{equation}\label{eq:FPE}
    \nabla \cdot (\rho(x) v(x)) = D \Delta \rho(x),\qquad x\in \mathbb{R}^d.
\end{equation}
Since~\eqref{eq:FPE} describes a limiting distribution $\lim_{t\to \infty} \rho( x,t),$ it has been previously used to approximate invariant measures for stochastically-forced dynamical systems \cite{Allawala_2016}. In what follows, we present an efficient numerical procedure based upon the FVM for computing solutions to \eqref{eq:FPE}. 
\subsubsection{Upwind Finite-Volume Discretization}\label{sec:FVM}

Without loss of generality, we assume that our system evolves on the $d$-dimensional rectangular state space 
$\Omega= [a_1,b_1]  \times \dots \times [a_d,b_d]  \subset \mathbb{R}^d$
with a spatially dependent velocity $v:\Omega \to \mathbb{R}^d$. We define $n_i \in \mathbb{Z}^+$, $1\leq i \leq d$, to be the number of equally-spaced points along the $i$-th spatial dimension at which we wish to approximate the solution of \eqref{FP2}, as well as the mesh spacing $\Delta x_i:= \frac{b_i-a_i}{n_i-1}$. We also discretize the time domain with a step size $\Delta t$. The mesh cells $\{C_j\}_{j=1}^N$, and their corresponding centers $\{x_j\}_{j=1}^N$, are both indexed  using column major order, where $N = \prod_{i=1}^d n_i$. Following \cite{bewley2012efficient}, we implement a first-order upwind finite-volume discretization of the continuity equation, adding a diffusion term using the central difference scheme and enforcing a zero-flux boundary condition \cite{leveque2002finite}. This allows us to obtain an explicit time-evolution of the probability vector $\rho= \begin{bmatrix}\rho_1 & \rho_2 & \dots & \rho_N
\end{bmatrix}^{\top}\in\mathbb{R}^N$.

While $\rho$ is a discrete probability measure over the cells $C_j$, it also corresponds to a piecewise-constant probability density function on $\Omega$. With an abuse of notation, we will refer to both the piecewise-constant density and the discrete probability measure as $\rho$. Based on~\eqref{FP2}, the probability vector at the $l$-th time step evolves as 
$$
\rho^{(l+1)} = \rho^{(l)}+K \rho^{(l)}, \hspace{1cm} K = \sum_{i=1}^d \frac{\Delta t}{\Delta x}K_{i} ,
$$
where each $K_{i}$ is a tridiagonal matrix depending on the upwind velocities
\begin{align*}
v_{j}^{i,-} := \min \big\{0,v_j^i\big\},\quad  v_{j}^{i,+} := \max \big\{0,v_j^i\big\},\quad 
w_{j}^{i,-} := \min \big\{0,w_j^i\big\},\quad  w_{j}^{i,+} := \max \big\{0,w_j^i\big\},
\end{align*}
where  $ v_j^i := v\big(x_{j} - \mathbf{e}_i\Delta x_i/2\big)\cdot \mathbf{e}_i$ and $w_j^i :=v\big(x_{j} + \mathbf{e}_i\Delta x_i/2\big)\cdot \mathbf{e}_i$ denote the $i$-the components of the velocity field at the center of cell faces,  and $\{\mathbf{e}_i\}$ is the standard basis in $\mathbb{R}^d$. For the explicit form of $K$; see Appendix \ref{subsec:FVM_appendix}. To enforce the zero-flux boundary condition, we set both the velocity $v$ and diffusion $D$ to be zero on $\partial \Omega$. Moreover, the Courant--Friedrichs--Lewy~(CFL) stability condition enforces $\Delta t = \mathcal{O}(\Delta x^2)$ to ensure the stability of the scheme. To be more specific, we choose
\[
\Delta t < \frac{1}{2d}\frac{\Delta x^2}{D + \Delta x \|v\|_\infty}\,,
\]
where $\|v\|_\infty = \max_i \| v(x)\cdot \mathbf{e}_i\|_\infty$. In this way, we can enforce that all entries of $I + K$ are non-negative with columns summed to one, which implies that $I + K$ is a Markov matrix. As a result,  the total probability $\rho^{(l)}\cdot \mathbf{1}=1$ is conserved under time evolution, where  $\mathbf{1}:= \begin{bmatrix} 1 & \dots & 1
\end{bmatrix}^\top \in \mathbb{R}^N$. For a complete description of the finite volume scheme, we refer to \cite{leveque2002finite}.

\subsubsection{Teleporation and Diffusion Regularization}\label{subsec:regularization}
We use the finite volume discretization of the Fokker--Planck equation in~\Cref{sec:FVM} to approximate its steady-state solution. After discretization, finding such stationary distributions to~\eqref{eq:FPE} is equivalent to solving the linear system
$
(I+K)\rho =\rho.
$ 
Since the columns of $K$ sum to zero, we have that $M:=I+K$ is a column-stochastic Markov matrix. When $D \neq 0$, $M$ is a transition matrix for an ergodic Markov chain, which has a unique equilibrium. When $D =0$, to guarantee the uniqueness of the equilibrium,~\cite{yang2023optimal} applies the so-called teleportation regularization~\cite{gleich2015pagerank} and considers
$$M_{\varepsilon}:=(1-\varepsilon) M+ \varepsilon\, U,\quad U = N^{-1} \mathbf{1}\mathbf{1}^{\top}  \in \mathbb{R}^{N\times N}.$$
There is now a unique solution to the linear system 
\begin{equation}\label{eq:tele_ls}
M_{\varepsilon} \rho = \rho, \hspace{.5cm} \rho \cdot \mathbf{1}=1, \hspace{.5cm} \rho >0.
\end{equation}

Since $U$ is also a column stochastic Markov matrix with the uniform probability of visiting any point of the mesh, application of $M_{\varepsilon}$ amounts to stopping the dynamics based on $M$ at a random time and restarting it from a uniformly randomly chosen initial point. The size of $\varepsilon$ represents the restarting frequency. The use of teleportation connects all attractors through this ``random restart'', and thus the unique solution $\rho_\varepsilon$ to \eqref{eq:tele_ls} has full support. Similarly, when $D \neq 0$, the Brownian motion connects all disjoint attractors of the deterministic dynamics, giving a unique steady-state solution.

In Figure~\ref{F3}, we illustrate the $\rho_\varepsilon$ computed as the steady-state solution to the Fokker--Planck equation in the top row and the approximation to physical invariant measures of the corresponding SDE in the bottom row. From Figure \ref{F3}, we see that on a  coarse mesh, the first-order finite volume scheme incurs significant numerical error, which gives a computed solution with an artificial diffusion effect and thus is often referred to as the numerical diffusion \cite{bewley2012efficient}. The amount of numerical diffusion is expected to decay as $\mathcal{O}(\max_i{\Delta x_i})$ in the $L^\infty$ norm as we refine the mesh \cite{leveque2002finite}.

\begin{figure}[h!]
\centering
\subfloat[The computed steady state solution to \eqref{eq:FPE} for decreasing values of $\Delta x$.]{
\includegraphics[width = .85\textwidth]{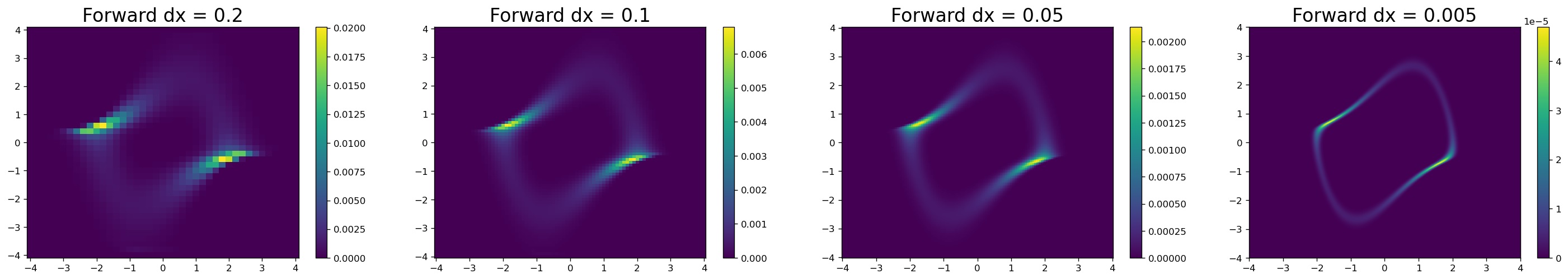}}\\
\subfloat[The occupation measure (see \eqref{eq:occupation}) from the SDE~\eqref{ES} for decreasing values of $\Delta x$.]{\includegraphics[width = .85\textwidth]{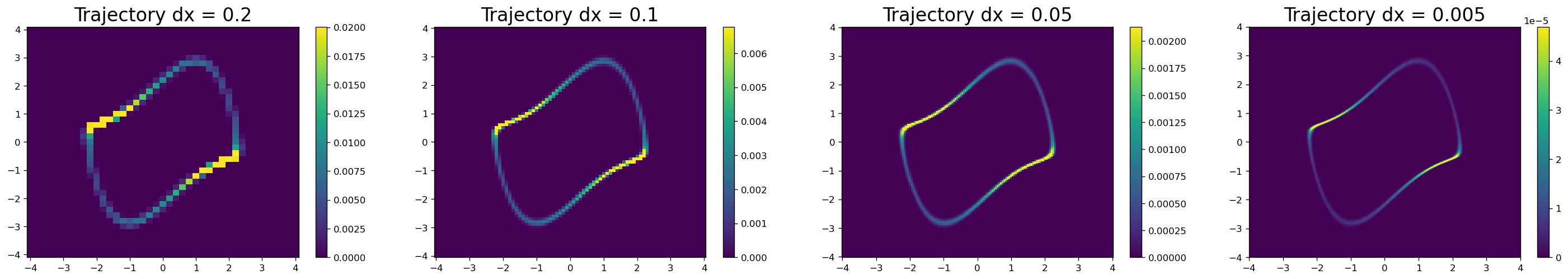}}
\caption{ As the mesh size of the forward model discretization is refined, we visually observe the convergence of the computed steady-state solution (a) to the approximate physical measure (b). The Van der Pol oscillator with $c = 1$ and $D = 0.001$ is used in this example. \label{F3}}
\end{figure}
\subsection{Gradient Calculation}\label{sec:gradientcalc}
The forward model yields a discrete measure $[\rho_1(\theta) \dots \rho_j(\theta) \dots \rho_N(\theta)]^\top$ over the cells $\{C_j\}$. Denoting by $\rho(v_{\theta}) = \rho(\theta)$ the correspoinding piecewise constant density over $\Omega$ (see \cite[Eqn.~(5.1)]{yang2023optimal}) our goal is to solve the optimization problem \eqref{eq:opt2}
by using gradient-based methods. The \textit{adjoint-state method} is an efficient technique by which we can evaluate the derivative $\partial_{\theta} \mathcal{J}$, as the computation time is largely independent of the size of $\theta$. One can derive the adjoint-state method for gradient computations by differentiating the discrete constraint, which in our case is the eigenvector problem:
$$
g(\rho(\theta),\theta) = M_{\varepsilon}(\theta) \rho(\theta) - \rho(\theta)=\mathbf{0},
$$ 
where $\rho(\theta) \cdot \mathbf{1} = 1.$ Specifically, we compute $\partial_\theta \mathcal{J} = \lambda^\top  \partial_\theta g$ where $ \lambda$ solves $\left( \partial_{\rho} g\right)^\top \lambda =  - \left( \partial_{\rho} \mathcal{J}\right)^\top.$
In our case, this linear system is the adjoint equation (see \cite[Eqn.~(5.8)]{yang2023optimal})
\begin{equation}\label{eq:adjoint}
   (M_{\varepsilon}^\top - I)\lambda = - \left( \partial_{\rho} \mathcal{J}\right)^\top +\left( \partial_{\rho} \mathcal{J}\right)^\top \rho \, \mathbf{1}, 
\end{equation}
and the derivative
\begin{equation} \label{eq:adjgradient}
    \partial_\theta \mathcal{J} = \lambda^\top \big(\partial_{\theta} M_{\varepsilon}\big) \rho.
\end{equation}
As a result, we only need to compute the derivatives $\partial_{\rho}\mathcal{J}$ and $\partial_{\theta} M_{\varepsilon}$ to determine the gradient $\nabla_\theta \mathcal{J} = \left(\partial_\theta \mathcal{J} \right)^\top$. The former depends on the choice of the objective function, while the latter is determined by a specific parameterization of the velocity field $v_{\theta}(x)$ determined by its hypothesis space.

Next, we show how to obtain $\partial_\theta M_\varepsilon$, which is the other necessary component in the adjoint-state method for gradient calculation; see~\eqref{eq:adjoint}-\eqref{eq:adjgradient}. To begin with, we consider $\theta =\{ v_j^i\}$ for all $i=1,\dots,d$ and $j = 1,\dots,N$, which corresponds to a piecewise-constant velocity parameterization. Expanding the matrix-vector products in \eqref{eq:adjoint} and taking advantage of the fact that $\partial_{v_j^i}K_i$ is only non-zero in four entries (see Appendix \ref{subsec:FVM_appendix}), we obtain
\begin{align}\label{eq:simplifygrad}
\frac{\partial \mathcal{J}}{\partial v_{j}^i} &=  \lambda \cdot \frac{\partial M_{\varepsilon}}{\partial v_j^i}\rho =   (1-\varepsilon)\frac{\Delta t}{\Delta x}\left( \lambda \cdot \frac{\partial K_i}{\partial v_j^i}\rho \right) \nonumber \\ &=(1-\varepsilon)\frac{\Delta t}{\Delta x}\left( H(v^{i}_j)\rho_{j-S_i} \lambda_j+ (1-H(v^{i}_j)) \rho_{j}\lambda_j -H(v_j^{i}) \rho_{j-S_i}\lambda_{j-S_i} - (1-H(v_j^{i}))\rho_j \lambda_{j-S_i} \right)\nonumber \\
 &= (1-\varepsilon)\frac{\Delta t}{\Delta x} \left(\lambda_j - \lambda_{j-S_i}\right)\left(H(v_j^{i})\rho_{j-S_i} +(1-H(v_j^{i})) \rho_{j}\right),
\end{align}
where $H(\boldsymbol{\cdot})$ is the Heaviside function. Equation~\eqref{eq:simplifygrad} provides an efficient way for computing the gradient of the objective function with respect to the piecewise-constant velocity on cells  $\{C_j\}$  from our finite-volume discretization. 

Alternatively, if the velocity $v = v(x;\theta)$ is smoothly parameterized by $\theta = [\theta_1,\dots,\theta_k,\dots,\theta_p]^\top\in\mathbb{R}^p$, for each $\theta_k$, we can then evaluate
\begin{align}\label{eq:gradient}
    \frac{\partial \mathcal{J}}{\partial \theta_k} &= \sum_{j=1}^N\sum_{i=1}^d \frac{\partial \mathcal{J}}{\partial v_j^i}\frac{\partial v_j^i}{\partial \theta_k},\qquad   \frac{\partial v_j^i}{\partial \theta_k} = \mathbf{e}_i\cdot \frac{\partial v}{\partial \theta_k}\bigg\rvert_{(x_{j} - \mathbf{e}_i\Delta x_i/2;\theta)},
\end{align}
to determine the derivative $\partial_{\theta}\mathcal{J}.$ Motivated by the universal approximation theory of neural networks, one choice is to model the velocity $ v_{\theta}(x)$ as a neural network, where the tunable parameters $\theta\in \mathbb{R}^p$ make up the network's weights and biases. We combine the adjoint-state method for the PDE constraints and the backpropagation technique to update the weights and biases of the neural network. The term $\partial_{v_j^i} \mathcal{J}$ in the gradient calculation~\eqref{eq:gradient} can be computed by first evaluating the neural network on the mesh of cell face centers oriented in the direction of $\mathbf{e}_i$ to obtain $\{v_j^i\}$, which is then plugged into~\eqref{eq:simplifygrad} to obtain $\partial_{v_j^i} \mathcal{J}$. The remaining term $\partial_{\theta} v$ in \eqref{eq:gradient} is then computed via backpropagation.


\subsection{Numerical Results}
\label{sec:numerics}
\subsubsection{Synthetic Test Systems}\label{subsec:vanderpol}
We begin our numerical experiments by considering the autonomous Van der Pol oscillator \cite{1084738}, given by
\begin{equation}\label{eq:VDP}
\begin{cases}
    &\dot{x} = y, \\
    & \dot{y} = c(1-x^2)y - x,
\end{cases}
\end{equation}
where we set $c = 2.$ We compare our approach with the SINDy and Neural ODE frameworks for reconstructing the dynamics \eqref{eq:VDP} based upon a slowly sampled trajectory of length~$10^3$. Both our approach and the Neural ODE are trained using the Adam optimizer \cite{kingma2014adam} and have the same architecture.  As shown in Figure~\ref{fig:initcompare}, all three frameworks perform well when using the quickly sampled trajectory. However, SINDy and the Neural ODE frameworks are less robust to changes in the sampling frequency of the inference data than our approach. This is further demonstrated in Table~\ref{table:compare}, where we quantify the error in the simulated invariant measure using the reconstructed velocity. When the data is sampled at a sufficiently high frequency, Table~\ref{table:compare} also shows that methods such as SINDy or the Neural ODE are preferable in terms of both computational cost and accuracy.
\begin{center}
\begin{table}[h!]
\centering

\begin{tabular}{||c| c |c| c||} 
 \hline
 Method &  Sampling Freq. (Hz) & Wall-Clock Time (s) &Error  \\ [0.5ex] 
 \hline\hline
 SINDy & 10.00 & $2 \cdot 10^{-2}$ & $5.6\cdot 10^{-3}$\\
 Neural ODE & $10.00$ & $5 \cdot 10^2 $ & $5.32\cdot 10^{-3} $ \\
 Ours & $10.00$ & $5\cdot 10^2$ & $1.14\cdot 10^{-1} $ \\
 \hline\hline
  SINDy & 0.25 & $10^{-2}$ & $3.52$\\
 Neural ODE & $0.25$ & $5 \cdot 10^2 $ & $1.81$ \\
 Ours & $0.25$ & $5 \cdot 10^2$  &$6.79 \cdot 10^{-2} $ \\
 \hline
\end{tabular}
\caption{Comparison with the SINDy and Neural ODE frameworks for performing system identification from slowly sampled trajectories. The error is quantified by the 2-Wasserstein distance between the simulated and observed occupation measures (see Appendix \ref{appendix:obj}). \label{table:compare}}

\end{table}
\end{center}

We next demonstrate the applicability of our proposed approach to chaotic dynamical systems. Specifically, we study the Lorenz-63 system \cite{luzzatto2005lorenz}, defined by
\begin{equation}\label{eq:lorenz}
\begin{cases}
    \dot{x} &= c_1(y-x) \\
    \dot{y} &= x(c_2-z)-y\\
    \dot{z} &= xy-c_3z
\end{cases},
\end{equation}
where we consider $(c_1,c_2,c_3) = (10,28,8/3).$ 
For these choices of parameters, the Lorenz-63 system exhibits chaotic behavior and admits a unique physical measure \cite{TUCKER19991197}.
\begin{figure}[h!]
\centering
\subfloat[The reconstructed velocity and simulated trajectories. ]{
\includegraphics[width = .75\textwidth]{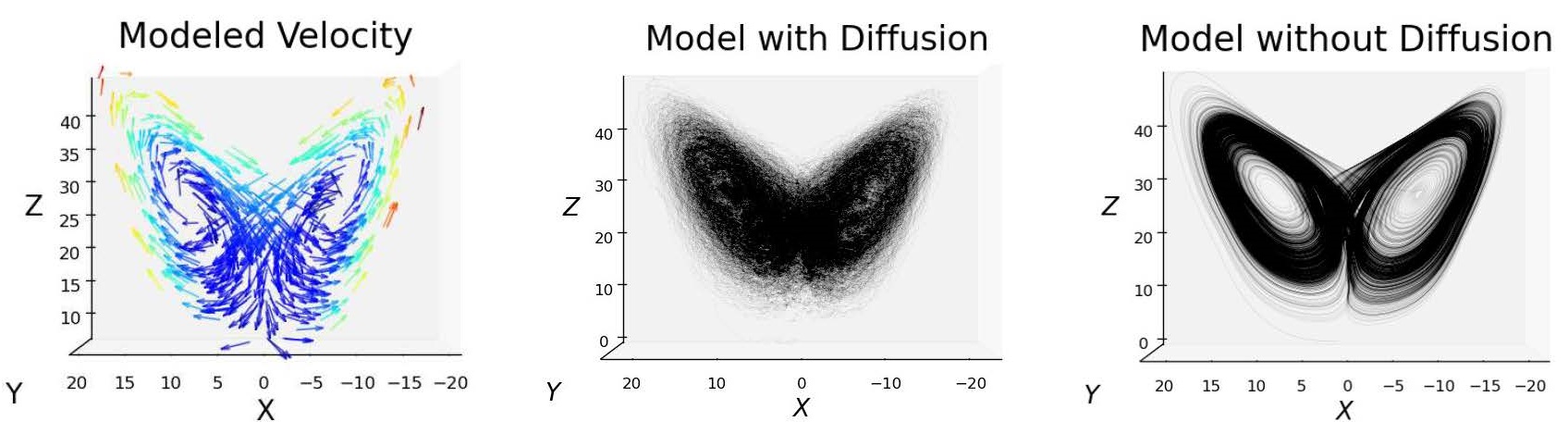}}\\
\vspace{.1cm}
\subfloat[The ground truth velocity and ground truth trajectories.]{\includegraphics[width = .75\textwidth]{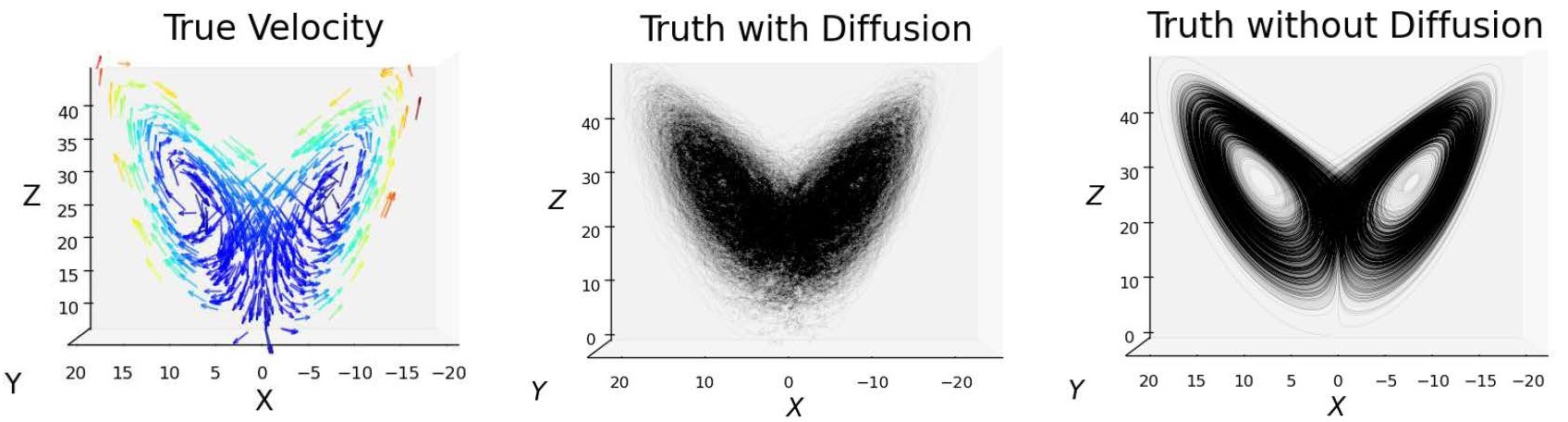}}
\caption{Reconstructing the $\dot{x}$ component of the Lorenz-63 system from its observed, noisy occupation measure. We used a mesh-spacing of $\Delta x = 2$ and a diffusion coefficient of $D = 10.$  \label{fig:lorenz}}
\end{figure}

 In Figure \ref{fig:lorenz}, we assume that the quantities $\dot{y}$ and $\dot{z}$ are known, and we reconstruct the $x$-component of the velocity, using the stochastically-forced Lorenz-63 system's occupation measure. We emphasize that the data used to approximate the Lorenz system's occupation measure can be sampled slowly or even randomly in time; see \cite[Figure 7]{yang2023optimal}. Using the occupation measure \eqref{eq:occupation}, we are able to successfully invert the first component $\dot{x}$ of the Lorenz-63 system's velocity via a neural network parameterization.

\subsubsection{Hall-Effect Thruster}\label{subsec:HET}

We now turn to the more realistic setting of experimentally sampled time-series data, where we also demonstrate the applicability of our approach for performing uncertainty quantification for the forecasted dynamics. Specifically, we study a time-delay embedding (see Section \ref{sec:time_delay}) of the Cathode--Pearson signal sampled from a Hall-effect thruster (HET) in its breathing mode \cite{eckhardt2019spatiotemporal, greve2019data}.  Hall-effect thrusters are in-space propulsion devices that exhibit dynamics resembling stable limit cycles while in breathing mode. Intrinsic physical fluctuations present in the Cathode--Pearson signal indicate that the HET's dynamics may be modeled well by a Fokker--Planck equation. 

\begin{figure*}[h!]
\centering
\subfloat[Inference data]{
\includegraphics[width = .28\textwidth]{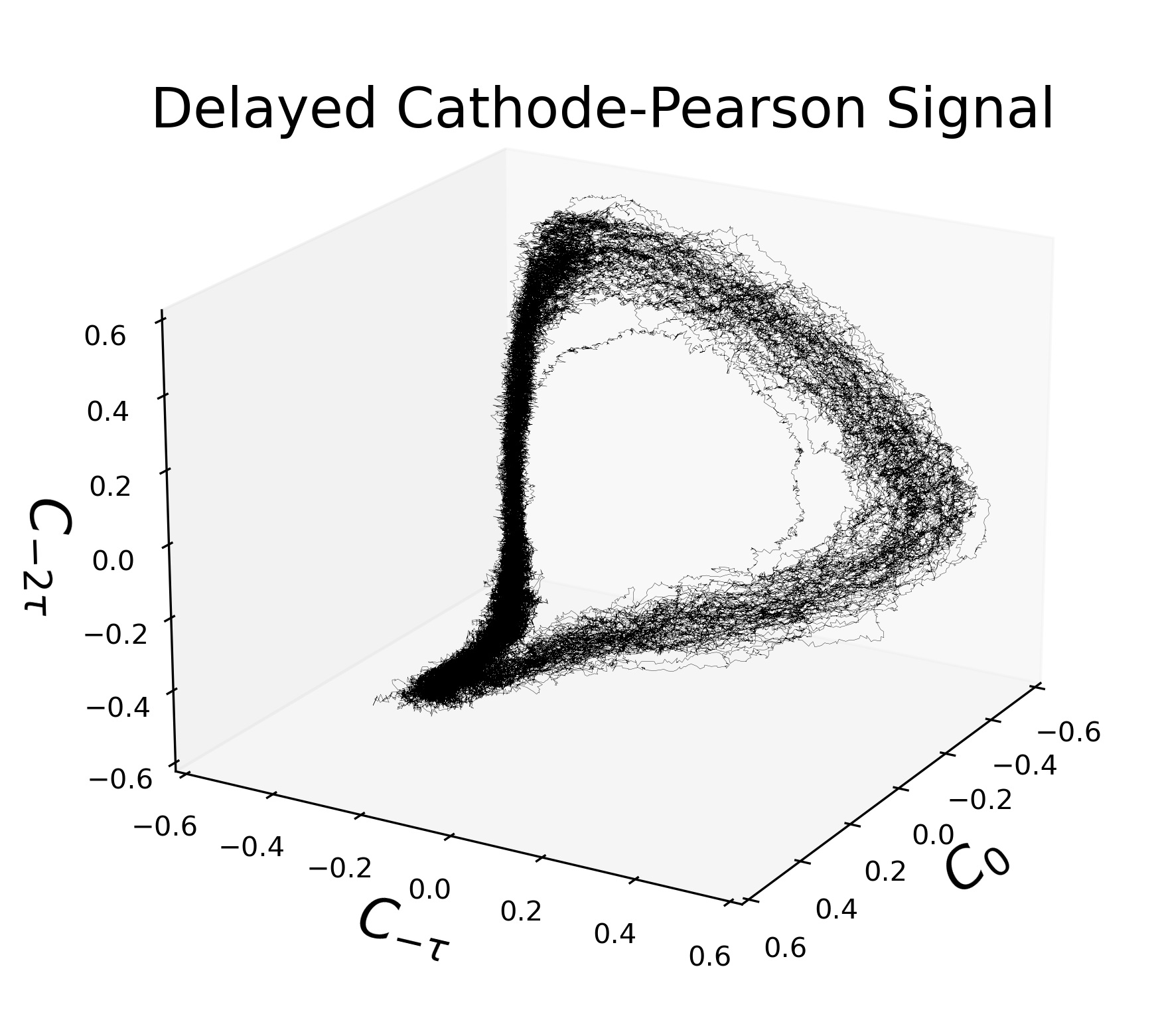}}\label{sf:1}
\subfloat[Reconstructed velocity]{\includegraphics[width = .28\textwidth]{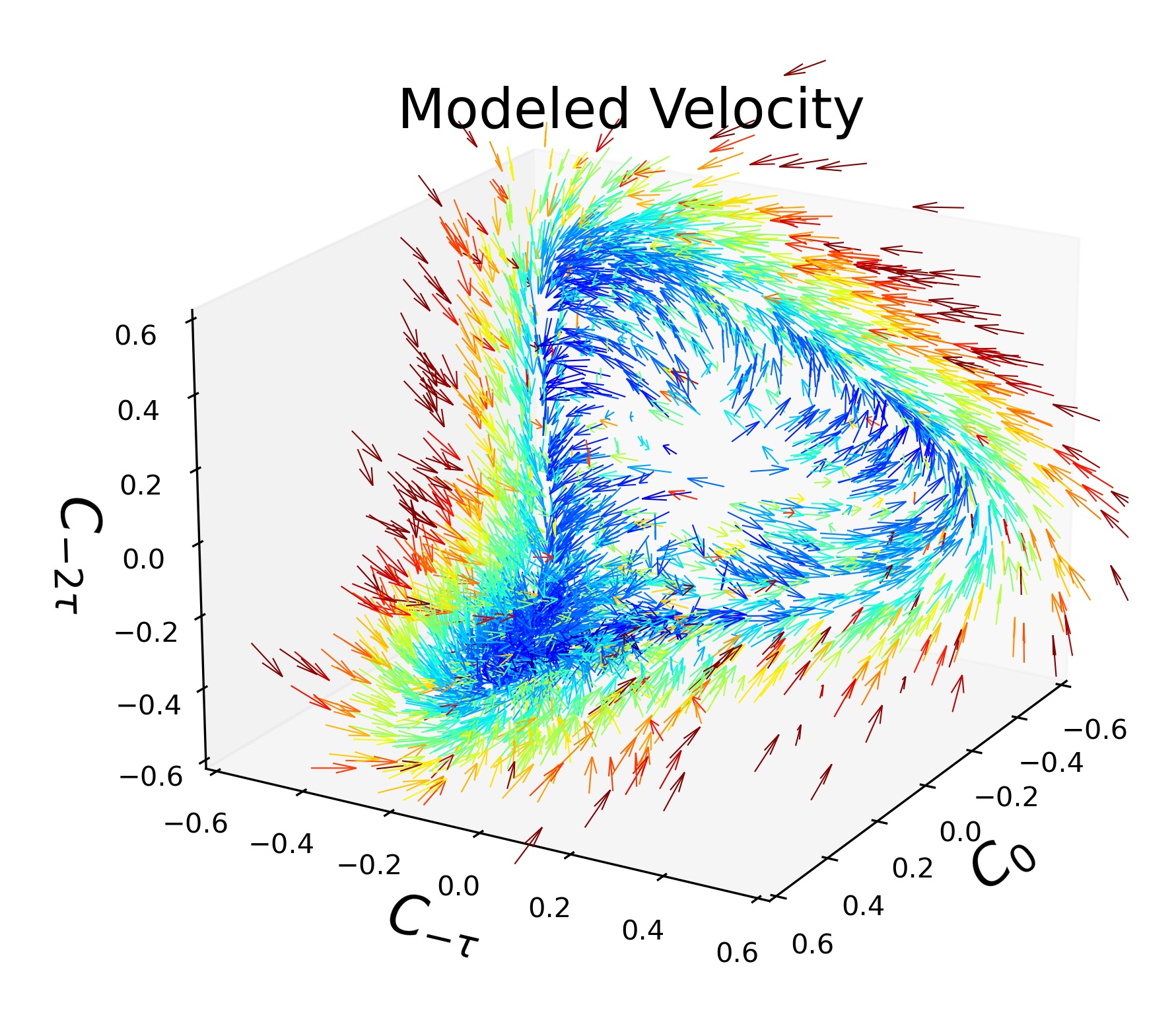}}
\subfloat[Simulated trajectory]{\includegraphics[width = .28\textwidth]{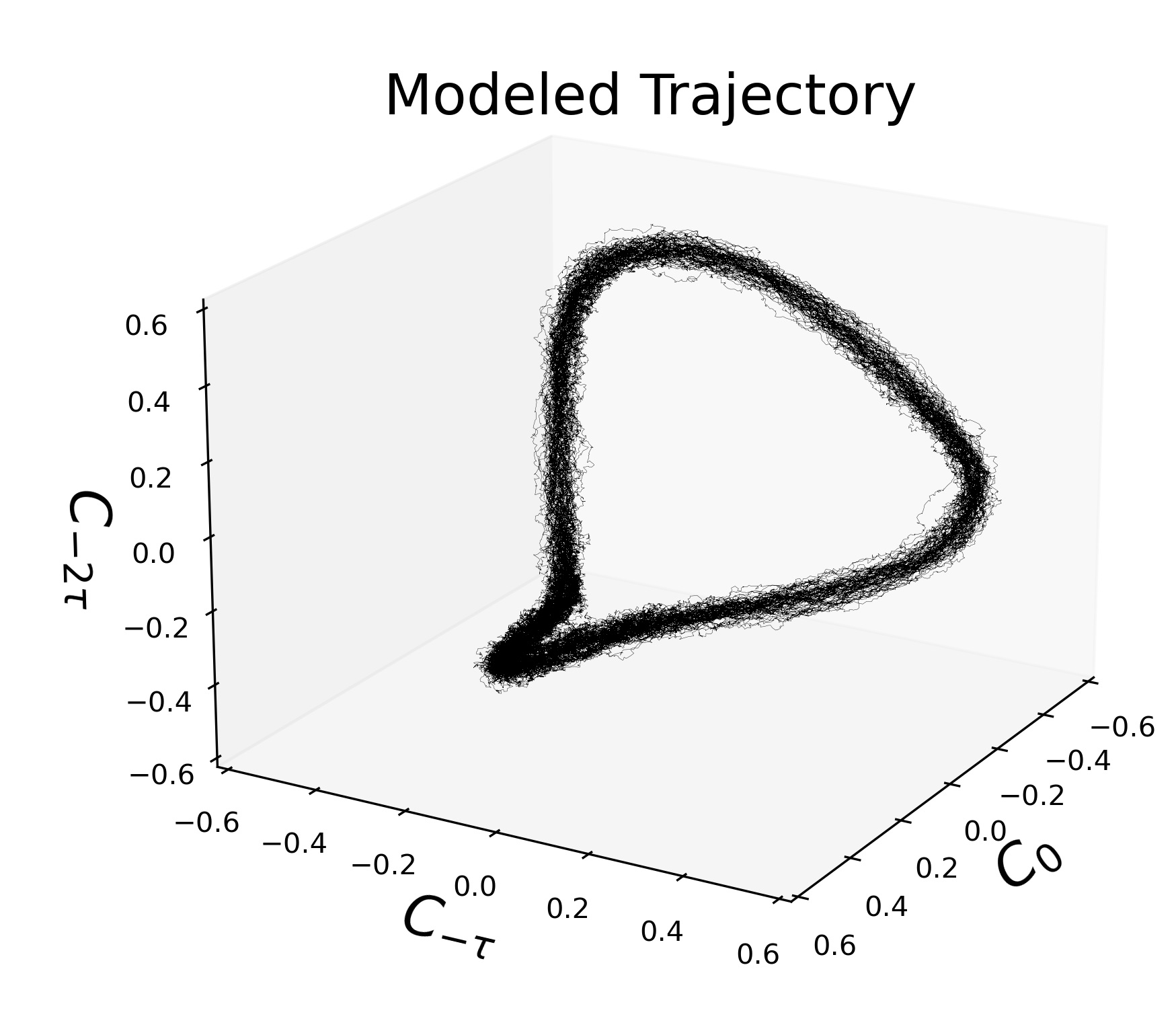} \label{fig:HETc}}\\\vspace{.6cm}
\subfloat[ Using the Fokker--Planck model to predict the evolution of a distribution of initial conditions.]{\includegraphics[width = .9\textwidth]{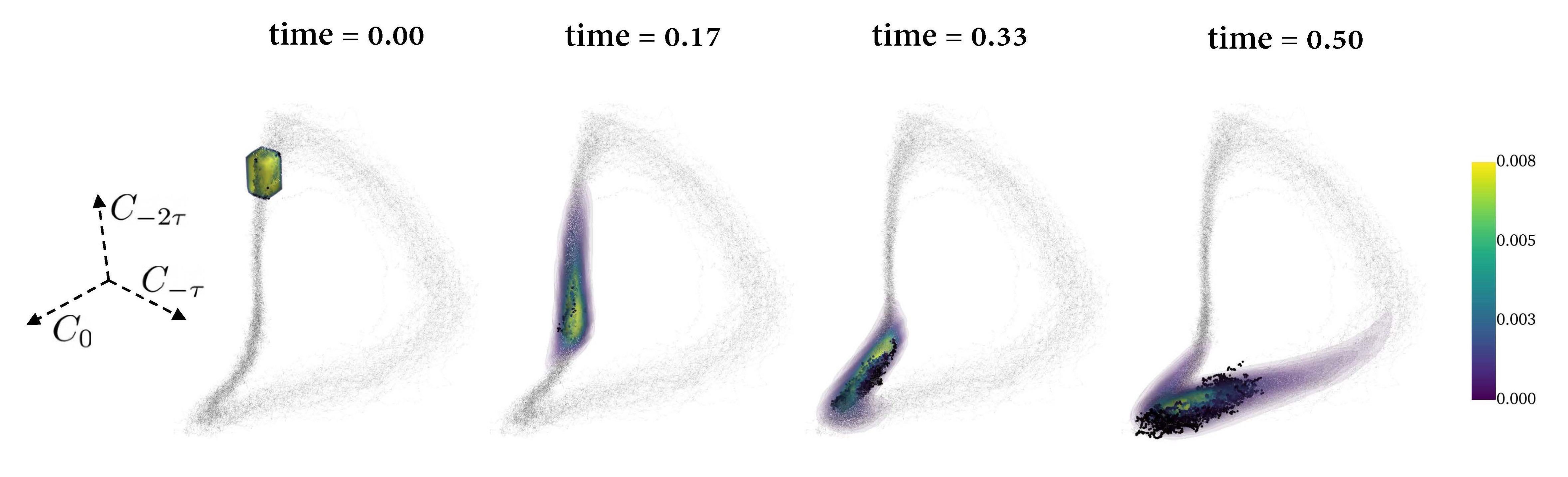} \label{fig:HETd}}\\
\caption{Reconstructing the velocity from the embedded Cathode-Pearson signal's invariant measure. In Figure~\ref{fig:HETc} we simulate a long SDE trajectory according from this model, and in Figure~\ref{fig:HETd} we evolve the reconstructed Fokker--Planck equation forward in time to predict the evolving empirical distribution of certain data samples. \label{fig:HET}}
\end{figure*}
Towards this, we first time-delay embed the observed Cathode--Pearson signal in $d$-dimensions and use a histogram approximation to compute the occupation measure $\rho^*$; see \eqref{eq:opt2}. We then seek a solution to the optimization problem \eqref{eq:opt2} for a velocity $v = v_{\theta}$, which can provide us with a model of the asymptotic statistics of the embedded trajectory. To further ensure that the reconstructed velocity $v_{\theta}$ evolves trajectories according to the same time-scale as the observed trajectory, we rescale both $v_{\theta}$ and the diffusion coefficient $D$ after training. Our results for modeling the HET dynamics are shown in Figure \ref{fig:HET} for an embedding dimension of $d = 3$ and time-delay of $\tau = 1.4\cdot 10^{-5}$ seconds, or rather $\tau = .23$ when normalizing the time-scale to the HET breathing mode frequency (16.6kHz). In Figure \ref{fig:HETc}, we observe that the simulated trajectory of the reconstructed SDE which accurately matches the shape of the embedded Cathode--Pearson signal. In Figure \ref{fig:HETd}, we demonstrate the ability of our approach for performing uncertainty quantification for the forecasted dynamics.

\section{Approximation of the Perron--Frobenius Operator via a Data-Driven Mesh}\label{sec:mesh}

The FVM introduced in Section \ref{sec:FVM} is one approach for approximating a dynamical system's Perron--Frobenius Operator (PFO) \cite{norton2018numerical}; see Section~\ref{subsec:perron}. However, the numerical experiments in Section~\ref{sec:numerics} based upon the FVM quickly become computationally infeasible as the dimension of the state grows; see Table \ref{tab:my_label}.
\begin{table}[h!]
    \centering
    \begin{tabular}{|c|c|c|}
    \hline
    Number of cells     & Wall-clock time (s) & Memory usage (MB)  \\
    \hline
    $\approx 10^2$ & 0.07 & 0.07 \\
    $\approx 10^3$ & 0.08 & 0.39 \\
    $\approx 10^4$ & 0.12 & 3.33\\
    $\approx 10^5$ & 34.04 &320.23 \\
    \hline
    \end{tabular}
    \caption{Cost of the FVM forward evaluation as the number of cells grows.}
    \label{tab:my_label}
\end{table}

In this section, we develop a Galerkin-inspired method for approximating the PFO based upon data-adaptive unstructured meshes and Monte--Carlo integration. 
In Section \ref{subsec:galerkin}, we discuss the assumptions under which a Galerkin projection of the PFO is expected to converge and we introduce a critical regularization parameter that can address an issue of vanishing gradients. In Section~\ref{subsec:MC_analysis}, we study sample-based approximations to this Galerkin projection and perform an analysis of the resulting Monte--Carlo integration error to determine the best placement of mesh cells from the principle of  optimal variance reduction. Numerical experiments and discussions follow in Section~\ref{subsec:numerics2}, where we showcase the applicability of the proposed method for modeling high-dimensional dynamical systems from noisy time-series data.
\subsection{Regularized Projection of the Perron--Frobenius Operator}\label{subsec:galerkin}
Ulam's method is a Galerkin projection of the PFO onto a collection of indicator functions partitioning the state space which preserves critical properties of the PFO, such as positivity and mass-conservation \cite{li1976finite}. Due to the use of non-differentiable characteristic functions, the approach suffers from the problem of vanishing gradients which hinders its utility in gradient-based optimization problems. In this section, we introduce a regularization of this Galerkin projection which utilizes smooth approximations to the indicator functions on cells and can eliminate the problem of vanishing gradients. Our scheme retains the properties of positivity and mass-conservation, and in Theorem \ref{thm:convergence} we prove that as the regularization and discretization parameters are refined, we obtain convergence to the underlying PFO in a suitable operator norm. 

We now begin our technical discussion.  Let $X$ be a compact metric space and let $\mu\in\mathscr{P}(X)$ be fixed. As in Definition \ref{def:PFO}, we consider the Perron--Frobenius operator $\mathcal{P}:L_{\mu}^1(X)\to L_{\mu}^1(X)$ based upon the non-singular discrete map $T:X\to X$. In the continuous-time case, $T$ can simply be taken as the time-$\Delta t$ flow map of some vector field. Now consider a sequence of partitions  $\{\{C_{i,n}\}_{i=1}^n\}_{n=1}^{\infty}$ of $X$ and, for $\varepsilon > 0$, a sequence of partitions of unity $\{\{\psi^{(\varepsilon)}_{i,n}\}_{i=1}^n\}_{n=1}^{\infty}$ belonging to $L_{\mu}^1(X)\cap L_{\mu}^{\infty}(X)$ satisfying Assumption \ref{assumption1} and Assumption \ref{assumption:2}, respectively.
\begin{assumption}\label{assumption1}
The sequence $\{\{C_{i,n}\}_{i=1}^n\}_{n=1}^{\infty}$ satisfies the following. 
    \begin{itemize}
\item \textit{(Partition)}: $C_{i,n}\cap C_{j,n} = \emptyset$ if $i \neq j$ and $\bigcup_{i=1}^n C_{i,n} = X.$
    \item \textit{(Increasing resolution)}: $\displaystyle\lim_{n\to\infty}\max_{1\leq i\leq n}(\textup{diam}(C_{i,n})) = 0$.
    \item \textit{(Positive measure)}: $\mu(C_{i,n}) > 0,$ for all $1\leq i \leq n$ and all $n\in\mathbb{N}$.
\end{itemize}
\end{assumption}
\begin{assumption}\label{assumption:2}
    The collection $\{\{\psi^{(\varepsilon)}_{i,n}\}_{i=1}^n\}_{n=1}^{\infty}$ satisfies the following properties.
    \begin{itemize}
        \item (\textit{Partition of unity}): For $\varepsilon > 0$ and $n\in\mathbb{N}$ fixed,
        $\sum_{i=1}^n \psi_{i,n}^{(\varepsilon)}(x) = 1$ and $\psi_{i,n}^{(\varepsilon)}(x)\geq 0.$ 
        \item (\textit{Approximation}): For $n\in\mathbb{N}$ and $i\leq n$ it holds $\psi_{i,n}^{(\varepsilon)} \xrightarrow[]{\varepsilon \to 0}\chi_{C_{i,n}}$ pointwise $\mu$-a.e.
    \end{itemize}
\end{assumption}
Note that $n\in\mathbb{N}$ represents the resolution of a given partition $\{C_{i,n}\}_{i=1}^n$, which for notational convenience we will occasionally suppress. Now, we define 
\begin{equation}\label{eq:phi}
    \phi_{i}:= \frac{\chi_{C_{i}}}{\mu(C_{i})}\in L_{\mu}^1(X) \cap L_{\mu}^{\infty}(X),\qquad 1\leq i \leq n,
\end{equation}
which is a normalized characteristic function on the cell $C_{i}.$ It also holds that $\phi_{i} d\mu = d\mu_{i},$ where $\mu_{i}\in\mathscr{P}(X)$ is the conditional measure defined by $\mu_i(B) :=\mu(C_i\cap B)/\mu(C_i)$, for all $B\in\mathscr{B}$. The subspace of $L_{\mu}^1(X)$ spanned by the collection \eqref{eq:phi} will be denoted  $\Delta_n:=\text{span}(\{\phi_{i,n}\}_{i=1}^n) \subseteq L_{\mu}^1(X)$. It is helpful to define the operator $\mathcal{Q}^{(n)}:L_{\mu}^1(X)\to \Delta_n$, where  
\begin{equation}\label{eq:proj}
    \mathcal{Q}^{(n)} f := \sum_{i=1}^n a_{i,n} \phi_{i,n} ,\qquad a_{i,n} := \int_{C_{i,n}}fd\mu.
\end{equation}
Intuitively, $\mathcal{Q}^{(n)}f$ is a piecewise constant function describing the average value of $f$ over each cell.
Now, for $n\in\mathbb{N}$ and $\varepsilon > 0$, we define $\mathcal{L}^{(n,\varepsilon)}:\Delta_n\to \Delta_n$ by setting 
\begin{equation}\label{eq:ulam_projection_eps}
    \mathcal{L}^{(n,\varepsilon)}\phi_{i,n} := \sum_{j=1}^n M_{i,j}^{(n,\varepsilon)} \phi_{j,n}, \qquad M_{i,j}^{(n,\varepsilon)}:= \int_{X}(\psi_{j,n}^{(\varepsilon)}\circ T  )\phi_{i,n} d\mu. 
\end{equation}
In the case when $\psi_{j,n}^{(\varepsilon)} = \chi_{j,n}$, we have that 
\begin{equation}\label{eq:cond_int}
    M_{i,j} = \int_X \chi_{T^{-1}(C_j)} \phi_{i}d\mu =  \int_X \chi_{T^{-1}(C_j)}d\mu_i,
\end{equation}
and thus \eqref{eq:ulam_projection_eps} reduces to the usual Ulam method in which the entry $M_{i,j}$ describes the transition probability between cell $C_{i}$ and cell $C_{j}$, under the dynamics $T$; see \cite{li1976finite}. Using the operator $\mathcal{L}^{(n,\varepsilon)}:\Delta_n\to \Delta_n$, we finally define $\mathcal{P}^{(n,\varepsilon)}:L_{\mu}^1(X)\to L_{\mu}^1(X)$ by setting $\mathcal{P}^{(n,\varepsilon)}:= \mathcal{L}^{(n,\varepsilon)}\mathcal{Q}^{(n)},$ which we regard as our approximation to the true PFO. Theorem \ref{thm:convergence} establishes the main approximation properties of $\mathcal{P}^{(n,\varepsilon)}$. In particular, for any finite $n\in\mathbb{N}$ and $\varepsilon > 0$ we show that the operator preserves positivity and satisfies the mass-conservation principle. Moreover, as the parameters $n\in\mathbb{N}$ and $\varepsilon > 0$ are refined, we obtain convergence to the true PFO in the $\|\cdot \|_{L_{\mu}^1\to L_{\mu}^1}$ norm.
\begin{theorem}\label{thm:convergence}
    For $\varepsilon > 0$ and $n\in\mathbb{N}$ fixed,  the operator $\mathcal{P}^{(n,\varepsilon)}:L_{\mu}^1(X)\to L_{\mu}^1(X)$ is Markov. Moreover, it holds that  $\displaystyle \lim_{n\to\infty}\lim_{\varepsilon \to 0}\|\mathcal{P}^{(n,\varepsilon)} - \mathcal{P}\|_{L_{\mu}^1\to L_{\mu}^1} = 0.$
\end{theorem}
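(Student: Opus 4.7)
My plan is to decompose the claim into the Markov property of $\mathcal{P}^{(n,\varepsilon)}$ (which holds for each finite $n$ and $\varepsilon > 0$) and a two-stage convergence argument, via the triangle inequality
\[\|\mathcal{P}^{(n,\varepsilon)} - \mathcal{P}\|_{L_\mu^1\to L_\mu^1} \leq \|\mathcal{P}^{(n,\varepsilon)} - \mathcal{P}^{(n)}\|_{L_\mu^1\to L_\mu^1} + \|\mathcal{P}^{(n)} - \mathcal{P}\|_{L_\mu^1\to L_\mu^1},\]
where $\mathcal{P}^{(n)} := \lim_{\varepsilon\to 0}\mathcal{P}^{(n,\varepsilon)}$ is the classical Ulam--Galerkin projection, built from matrix entries $M_{i,j}^{(n)} = \int_X \chi_{T^{-1}(C_{j,n})}\phi_{i,n}d\mu$. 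The Markov property is straightforward: positivity is preserved because $f\geq 0$ forces the cell averages $a_{i,n}\geq 0$, while the basis $\{\phi_{j,n}\}$ and each entry $M_{i,j}^{(n,\varepsilon)} = \int_X (\psi_{j,n}^{(\varepsilon)}\circ T)\phi_{i,n}d\mu$ are nonnegative. Mass preservation follows from the partition-of-unity identity in Assumption~\ref{assumption:2}, which gives $\sum_j M_{i,j}^{(n,\varepsilon)} = \int_X \phi_{i,n}d\mu = 1$, so that $\int_X \mathcal{P}^{(n,\varepsilon)}f d\mu = \sum_i a_{i,n} = \int_X f d\mu$.

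For the inner limit (fixed $n$, $\varepsilon\to 0$), I exploit that both $\mathcal{P}^{(n,\varepsilon)}$ and $\mathcal{P}^{(n)}$ factor through the $n$-dimensional subspace $\Delta_n$, whose basis $\{\phi_{j,n}\}$ has mutually disjoint supports with $\|\phi_{j,n}\|_{L_\mu^1} = 1$. A direct computation using $\sum_i|a_{i,n}| \leq \|f\|_{L_\mu^1}$ yields
\[\|\mathcal{P}^{(n,\varepsilon)} - \mathcal{P}^{(n)}\|_{L_\mu^1\to L_\mu^1}\leq \max_{1\leq i\leq n}\sum_{j=1}^n |M_{i,j}^{(n,\varepsilon)} - M_{i,j}^{(n)}|.\]
Each entry converges by dominated convergence: the integrand $\psi_{j,n}^{(\varepsilon)}\circ T$ is uniformly bounded by $1$ and converges $\mu$-a.e.\ to $\chi_{T^{-1}(C_{j,n})}$, where non-singularity of $T$ is used to transport the $\mu$-null exceptional set of Assumption~\ref{assumption:2} through $T$. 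Since $n$ is finite, the componentwise convergence of matrix entries promotes to operator-norm convergence.

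For the outer limit ($n\to\infty$), I first verify the identity $\mathcal{P}^{(n)} = \mathcal{Q}^{(n)}\mathcal{P}\mathcal{Q}^{(n)}$ by matching both operators' action on the generators of $\Delta_n$ via the defining relation $\int_{C_j}\mathcal{P}g d\mu = \int_{T^{-1}(C_j)}g d\mu$, which reproduces the Ulam matrix entries. This gives the decomposition
\[\mathcal{P}^{(n)} - \mathcal{P} = \mathcal{Q}^{(n)}\mathcal{P}(\mathcal{Q}^{(n)} - I) + (\mathcal{Q}^{(n)} - I)\mathcal{P}.\]
The Increasing Resolution condition in Assumption~\ref{assumption1}, combined with uniform continuity on the compact Polish space $X$, gives $\|\mathcal{Q}^{(n)}g - g\|_{L_\mu^\infty}\to 0$ for continuous $g$; since $C(X)$ is dense in $L_\mu^1$ and $\|\mathcal{Q}^{(n)}\|_{L_\mu^1\to L_\mu^1}\leq 1$, a standard $\varepsilon/3$ argument upgrades this to $\mathcal{Q}^{(n)}\to I$ in the strong operator topology on $L_\mu^1$. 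Together with $\|\mathcal{P}\|_{L_\mu^1\to L_\mu^1}\leq 1$, each summand tends to zero on every fixed input. The main obstacle is upgrading this pointwise convergence to operator-norm convergence, since the projections $\mathcal{Q}^{(n)}$ themselves do not converge to $I$ in norm. I plan to close this gap by establishing that $\mathcal{P}:L_\mu^1\to L_\mu^1$ is a compact operator, i.e., it sends bounded sets to relatively compact (uniformly integrable and tight) subsets of $L_\mu^1$. Compactness would then reduce the strong convergence of $\mathcal{Q}^{(n)} - I$ to uniform convergence on the image $\mathcal{P}(\overline{B}_{L_\mu^1}(0,1))$ via a Kolmogorov--Riesz equicontinuity argument, closing the estimate after combining with $\|\mathcal{Q}^{(n)}\|\leq 1$ for the first summand. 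Establishing this compactness of $\mathcal{P}$ from regularity properties of the map $T$ implicit in the setup is where I expect to spend the bulk of the technical effort.
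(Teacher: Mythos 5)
Your overall route mirrors the paper's: decompose into the Markov property plus a two-stage $\varepsilon\to 0$, $n\to\infty$ convergence; establish Markovianity from nonnegativity of the matrix entries together with the partition-of-unity row-sum identity; handle the inner limit via dominated convergence on each entry $M_{i,j}^{(n,\varepsilon)}$ (promoting to operator norm because $\Delta_n$ is finite dimensional and $\|\mathcal{Q}^{(n)}\|_{L^1_\mu\to L^1_\mu}\le 1$); and handle the outer limit through the factorization $\mathcal{P}^{(n)} = \mathcal{Q}^{(n)}\mathcal{P}\mathcal{Q}^{(n)}$ combined with $\mathcal{Q}^{(n)}\to I$ in the strong operator topology. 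Your explicit bound $\|\mathcal{P}^{(n,\varepsilon)}-\mathcal{P}^{(n)}\|\le\max_i\sum_j|M_{i,j}^{(n,\varepsilon)}-M_{i,j}^{(n)}|$ is correct (disjoint supports and $\|\phi_{j,n}\|_{L^1_\mu}=1$), and your use of non-singularity to push the $\mu$-null exceptional set through $T^{-1}$ is the right justification. Up to the last step everything matches the paper, and these parts are fine.

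The issue you flag at the end is real but your proposed repair will not work, and it is worth being precise about why. You correctly observe that strong convergence $\mathcal{Q}^{(n)}\to I$ does not, on its own, upgrade to operator-norm convergence of $\mathcal{P}^{(n)}\to\mathcal{P}$; in fact this same gap is present in the paper's own argument, which fixes a single $f$ with $\|f\|_{L^1_\mu}=1$ and shows $\|(\mathcal{P}^{(n)}-\mathcal{P})f\|_{L^1_\mu}\to 0$, i.e.\ strong operator convergence, not norm convergence. However, the compactness of $\mathcal{P}$ that you hope to exploit is not available under the hypotheses in force. The only structural assumption on $T$ is non-singularity with respect to $\mu$, and this is far too weak: taking $T=\mathrm{id}$ (which is non-singular) gives $\mathcal{P}=I$, which is not compact on any infinite-dimensional $L^1_\mu$. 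That same example shows the operator-norm claim itself fails in this generality: for $T=\mathrm{id}$ one has $\mathcal{P}^{(n)}=\mathcal{Q}^{(n)}$, and $I-\mathcal{Q}^{(n)}$ is a nonzero projection whenever $\Delta_n\subsetneq L^1_\mu$, so $\|\mathcal{Q}^{(n)}-I\|_{L^1_\mu\to L^1_\mu}\ge 1$ for every $n$. Concretely, any $f$ supported in a single cell $C_{1,n}$ with $\int_{C_{1,n}}f\,d\mu=0$ and $\|f\|_{L^1_\mu}=1$ is annihilated by $\mathcal{Q}^{(n)}$. So the bulk of the technical effort you were planning to spend on compactness would be wasted. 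What the $\varepsilon/3$ density argument actually and correctly delivers is strong operator convergence, $\lim_{n\to\infty}\lim_{\varepsilon\to 0}\|(\mathcal{P}^{(n,\varepsilon)}-\mathcal{P})f\|_{L^1_\mu}=0$ for each fixed $f\in L^1_\mu$, and without further regularity or spectral assumptions on $T$ (or a change of Banach space) one cannot do better. Keep your inner-limit estimate, state the outer limit as strong convergence, and drop the compactness detour.
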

Our proof of Theorem~\ref{thm:convergence} is presented in Appendix \ref{appendix:proj_proof}. 
\subsection{Optimal Partition Construction}\label{subsec:MC_analysis}
While in Section \ref{subsec:galerkin}, we established the convergence of a PFO approximation scheme in the infinite-data and infinite-resolution limit (see Theorem \ref{thm:convergence}), in practice we must always perform computations using a finite amount of data and a finite number of cells. In this section, we investigate how the construction of the partition $\{C_i\}_{i=1}^n$ impacts our approximation error. While the optimal choice of partition for Ulam's method has previously been studied in the literature in the infinite-data limit, see e.g.~\cite{murray2004optimal}, we focus  on characterizing optimality from the perspective of finite-sample variance reduction. We formalize our optimality principle as a minimax problem and we show that  constructing a partition which \textit{evenly distributes} the observed data across the cells satisfies this optimality principle (see Proposition \ref{prop:optimal_strat}). In practice, this can be approximately achieved using a $k$-means clustering of the observed dataset, and in certain situations a constrained $k$-means routine can build clusters which exactly satisfy this condition \cite{bradley2000constrained}.

For simplicity, we consider the case without regularization, i.e., we assume $\psi_j^{(\varepsilon)} = \chi_{C_j}$ for $1\leq j \leq n$. We assume access to $N$ i.i.d.~samples $\{x_k\}_{k=0}^{N-1}\sim \mu\in\mathscr{P}(X)$, as well as the pairings $\{(x_k,T(x_k)\}_{k=1}^N$ under some dynamical system $T:X\to X$. That is, we do not know the evolution rule $T$ a-priori, but we instead observe $N$~i.i.d. samples describing its action on a fixed data-distribution. We will also assume throughout that $\text{mod}(n,N)  = 0$, i.e., it is possible to split the observed samples into $N/n$ distinct subsets. 

Let us now write $N_i:=|\{x_k\in C_i\}|$ to denote the number of samples in the $i$-th cell, which we assume to be positive. We also define $\{x_k^i\}_{k=1}^{N_i}$ as the subset of $\{x_k\}_{k=1}^N$ which is contained in the cell $C_i$.
Since the samples $\{x_k\}_{k=1}^N$ are i.i.d.~with respect to $\mu$, it also holds that the collection $\{x_k^i\}_{k=1}^{N_i}$ of samples which are contained in $C_i$ are i.i.d.~with respect to $\mu_i$; see Section \ref{subsec:galerkin}. In practice, one represents the PFO using the matrix $M\in\mathbb{R}^{n\times n}$ defined in \eqref{eq:cond_int}, and with access to only finitely many samples from $\mu$, the entries of $M$ are approximated via Monte--Carlo integration:
\begin{equation}\label{eq:MC}
    \widehat{M}_{i,j}= \frac{1}{N_i}\sum_{k=1}^{N_i} \chi_{T^{-1}(C_j)}(x_{k}^i),\qquad1\leq i,j\leq n.
\end{equation}
The approximation \eqref{eq:MC} has known standard deviation
\begin{equation}\label{eq:err}
    \mathcal{D}_{i,j}:=\Big(\mathbb{E}[(M_{i,j}-\widehat{M}_{i,j})^2]\Big)^{1/2} = \sqrt{\frac{\text{Var}_{\mu_i}(\chi_{T^{-1}(C_j)})}{N_i}}, \qquad 1\leq i,j\leq n ;
\end{equation}
see \cite[Section 2]{Caflisch_1998}. We now state our principle of optimal variance reduction, which seeks a partition of $X$ minimizing the worst-case standard deviation.
\begin{variance1}
    Let $\mathcal{X}_n$ denote the set of all  partitions of $X$ into $n$ subsets. Then, $\{\hat{C}_{i}\}\in \mathcal{X}_n$ satisfies the principle of optimal variance reduction (V1) if 
    \begin{equation}\label{eq:variance_1}
        \{\hat{C}_i\} \in \argmin_{\{C_i\} \in \mathcal{X}_n } \max_{1\leq i,j \leq n} \mathcal{D}_{i,j},
    \end{equation}
    where $\mathcal{D}_{i,j}$ is the standard deviation defined in \eqref{eq:err} based on a partition $\{C_i\} \in \mathcal{X}_n$
\end{variance1}
Without prior knowledge of the underlying system $T$ or the measure $\mu$, which are both required to evaluate the numerator of \eqref{eq:err}, we cannot check if a given partition $\{C_i\}\in \mathcal{X}_n$ satisfies our principle of optimal variance reduction. Notice that the numerator of \eqref{eq:err} is given by 
$$\text{Var}_{\mu_i}(\chi_{T^{-1}(C_j)}) =  \int_X \chi_{T^{-1}(C_j)} d\mu_i  - \bigg(\int_X \chi_{T^{-1}(C_j)}d\mu_i\bigg)^2 = M_{i,j}-M_{i,j}^2.$$
Since $M_{i,j}-M_{i,j}^2\in [0,1/4]$, we instead consider a modification to our principle of optimal variance reduction which places a worst-case bound on the standard deviation \eqref{eq:err} and can be verified when only finite sample data is available.
\begin{variance2}
A partition $\{\hat{C}_i\}\in \mathcal{X}_n$ satisfies the principle of optimal variance reduction (V2) if 
\begin{equation}\label{eq:solve_arg}
 \{\hat{C}_i\}\in \argmin_{\{C_i\}\in \mathcal{X}_n} \max_{1\leq i \leq n}\frac{1}{\sqrt{N_i}}.   
\end{equation}
\end{variance2}
Note that the minimax problem \eqref{eq:solve_arg} depends only on the number of points contained in each cell. Thus, solving \eqref{eq:solve_arg} is equivalent to finding numbers $N_1,\dots,N_n\in\mathbb{N}$ which solve
\begin{equation}\label{eq:minimizesum}
   \argmin_{N_1,\dots,N_n\in\mathbb{N}} \max_{1\leq i \leq n}\frac{1}{\sqrt{N_i}} \qquad \text{such that}\qquad \sum_{i=1}^n N_i = N.
\end{equation}
By inspection, one can verify that the solution to \eqref{eq:minimizesum} is given by $N_i = N/n$ for each $1\leq i \leq n$. This leads us to Proposition \ref{prop:optimal_strat} which classifies the set of partitions satisfying the principle of optimal variance reduction (V2).
\begin{proposition}\label{prop:optimal_strat}
    A partition $\{\hat{C}_i\}\in \mathcal{X}_n$  solves \eqref{eq:solve_arg} if and only if $N_i = N/n$ for all $1\leq i \leq n$. 
\end{proposition}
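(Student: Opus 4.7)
The plan is to reduce the minimax problem \eqref{eq:minimizesum} to a pigeonhole argument by observing that $t \mapsto 1/\sqrt{t}$ is strictly decreasing on $(0,\infty)$, so minimizing $\max_{1\leq i\leq n}1/\sqrt{N_i}$ subject to $\sum_i N_i = N$ is equivalent to maximizing $\min_{1\leq i \leq n} N_i$ subject to the same constraint. This reformulation turns the claim into the assertion that, among $n$-tuples of positive integers summing to $N$, the tuple that maximizes the smallest coordinate is precisely the balanced one $N_i = N/n$.

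To establish the ``if'' direction, I would simply plug in $N_i = N/n$ (which is an integer by the assumption $\text{mod}(N,n) = 0$) and compute the attained objective value $\sqrt{n/N}$. For the ``only if'' direction I would use pigeonhole: for any feasible assignment with $\sum_{i=1}^n N_i = N$, it cannot be the case that $N_i > N/n$ for every $i$, since summing that strict inequality over $i$ would yield $N > N$, a contradiction. Hence $\min_i N_i \leq N/n$, which gives
\begin{equation*}
\max_{1\leq i \leq n} \frac{1}{\sqrt{N_i}} \geq \sqrt{\frac{n}{N}}.
\end{equation*}
Combined with the value $\sqrt{n/N}$ achieved by the balanced assignment, this shows $\sqrt{n/N}$ is the optimal value. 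Finally, if a minimizer $\{\hat C_i\}$ attains this value, then $\min_i \hat N_i \geq N/n$, and together with $\sum_i \hat N_i = N$ this forces $\hat N_i = N/n$ for every $i$ (any strict inequality at one index would force a strict inequality at another in the opposite direction, violating the bound on the minimum).

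Honestly, the main obstacle here is not technical depth but bookkeeping about the constraint $\text{mod}(N,n)=0$ — this is what ensures $N/n$ is a legitimate integer candidate, so the infimum is actually attained within the feasible set $\mathbb{N}^n \cap \{\sum N_i = N\}$. Without this divisibility assumption the optimum would be achieved by the ``nearly balanced'' tuples that distribute the remainder $N \bmod n$ arbitrarily among the coordinates, and the characterization in Proposition~\ref{prop:optimal_strat} would need to be stated differently. Given the divisibility assumption, the argument above is essentially complete in two short paragraphs.
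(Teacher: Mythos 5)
Your proof is correct and fills in the pigeonhole argument that the paper asserts only ``by inspection'' before Proposition~\ref{prop:optimal_strat}: the averaging bound $\min_i N_i \le N/n$ forces the optimal value to be $\sqrt{n/N}$, and attainment plus $\sum_i N_i = N$ pins down $N_i = N/n$ uniquely. You are also right that the paper's ``$\mathrm{mod}(n,N)=0$'' is a transposition for $\mathrm{mod}(N,n)=0$; your argument uses the correct reading to guarantee $N/n$ is a feasible integer.
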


Proposition \ref{prop:optimal_strat} indicates that we should build partitions which contain the same number of samples in each cell, in order to reduce the worst-case standard deviation when approximating the PFO via the Monte--Carlo integration \eqref{eq:MC}. Notably, if the data samples come from a long trajectory which samples the invariant measure, then this mesh-construction strategy will involve focusing the majority of cells on the support of the invariant measure. We therefore expect this scheme to be especially useful for approximating the PFO of high-dimensional systems admitting low-dimensional invariant measures. In Section \ref{subsec:numerics2}, we implement the selection principle suggested by Proposition \ref{prop:optimal_strat} via $k$-means clustering and demonstrate the improved efficiency of the data-adaptive unstructured mesh compared to a uniform mesh; see Figure \ref{fig:Cat}.
\subsection{Numerical Results}\label{subsec:numerics2}
\subsubsection{Modified Cat Map}\label{subsubsec:cat}

We begin by investigating the efficiency of the uniform and unstructured mesh approaches for approximating the invariant density of a two-dimensional discrete-time system. In particular, we consider the map $S:[0,1]^2\to [0,1]^2$ defined by
\begin{equation}\label{eq:cat}
    S:= g\circ C \circ g^{-1},\qquad g(x,y):=(x^{10},y), \qquad C(x,y):= (2x+y,x+y)\pmod{1},
\end{equation}
where $C:[0,1]^2\to [0,1]^2$ is the prototypical Arnold cat map \cite{lasota2013chaos}, which admits the Lebesgue measure on $[0,1]^2$ as its unique physical invariant measure. We have composed with $g$ and $g^{-1}$ in \eqref{eq:cat} to obtain a new dynamical system which instead admits $\rho^*(x,y) = 10x^9$ as the unique invariant density.

\begin{figure}[h!]
    \centering
    \subfloat[Visualization of the ground truth invariant density and the density approximation by both a uniform mesh and the data-adaptive unstructured mesh using 400 cells.]{\label{fig:CatA} \includegraphics[width=\textwidth]{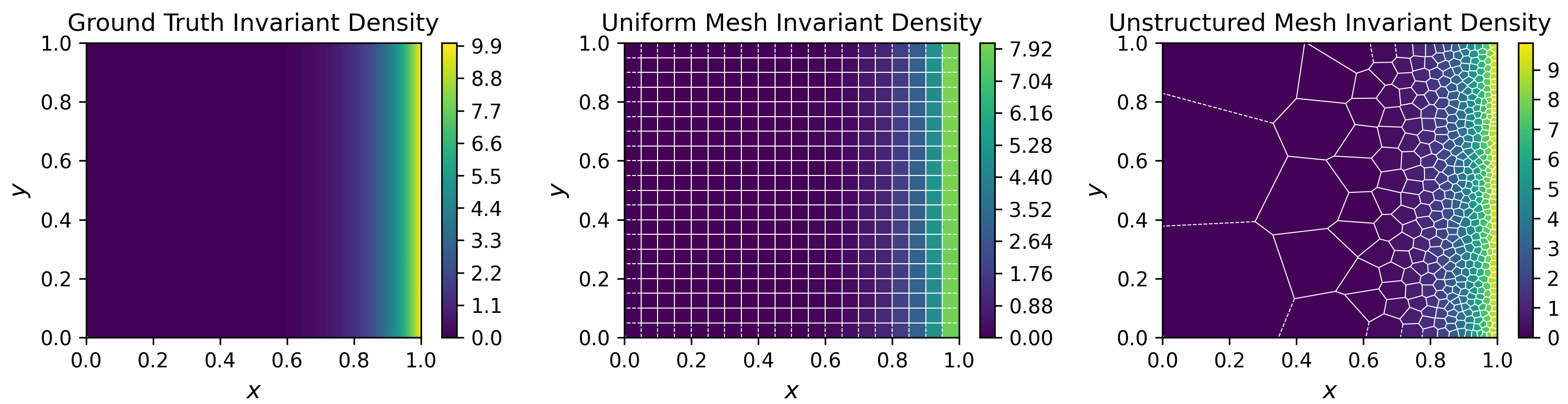}}\\
    \vspace{.5cm}
    \subfloat[Convergence comparison between the uniform and unstructured meshes in the large-data limit.]{\label{fig:CatB}\includegraphics[width = .35\textwidth]{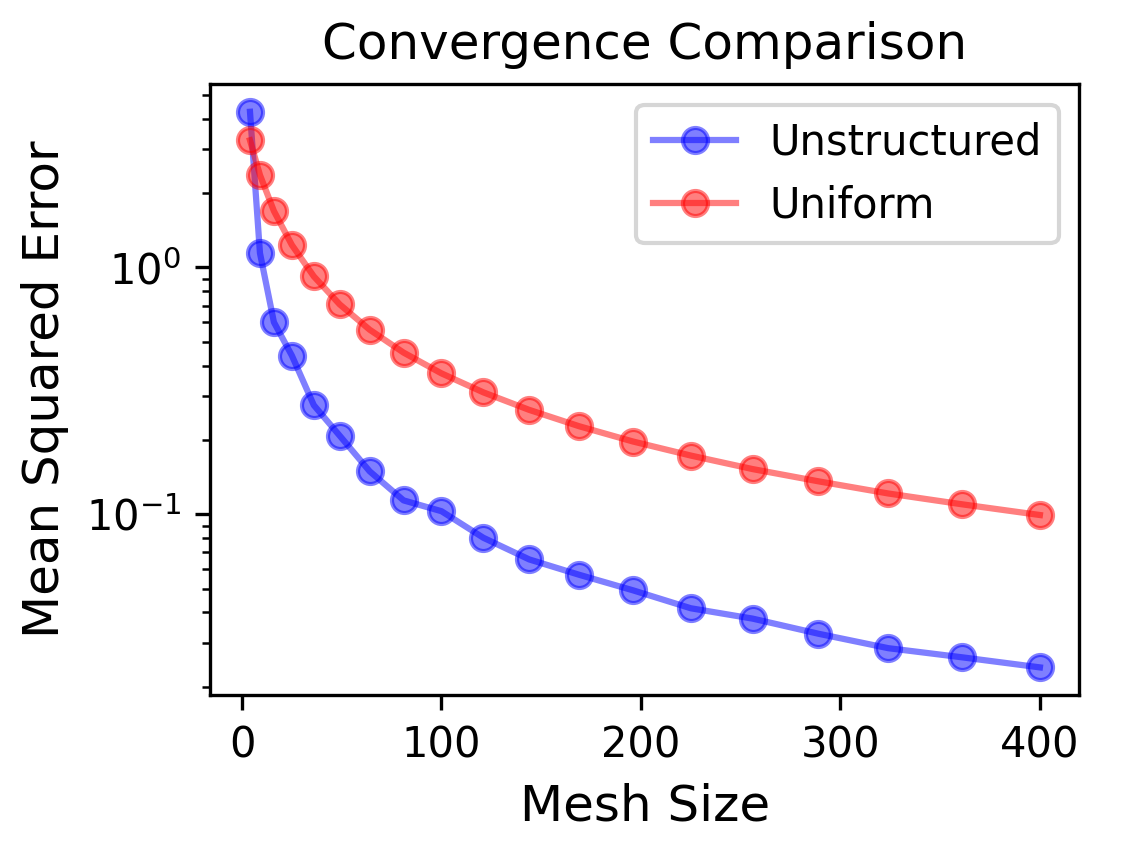}}\qquad \subfloat[Efficiency comparison between the uniform and unstructured mesh approaches for a fixed number of cells.]{\label{fig:CatC}\includegraphics[width = .36\textwidth]{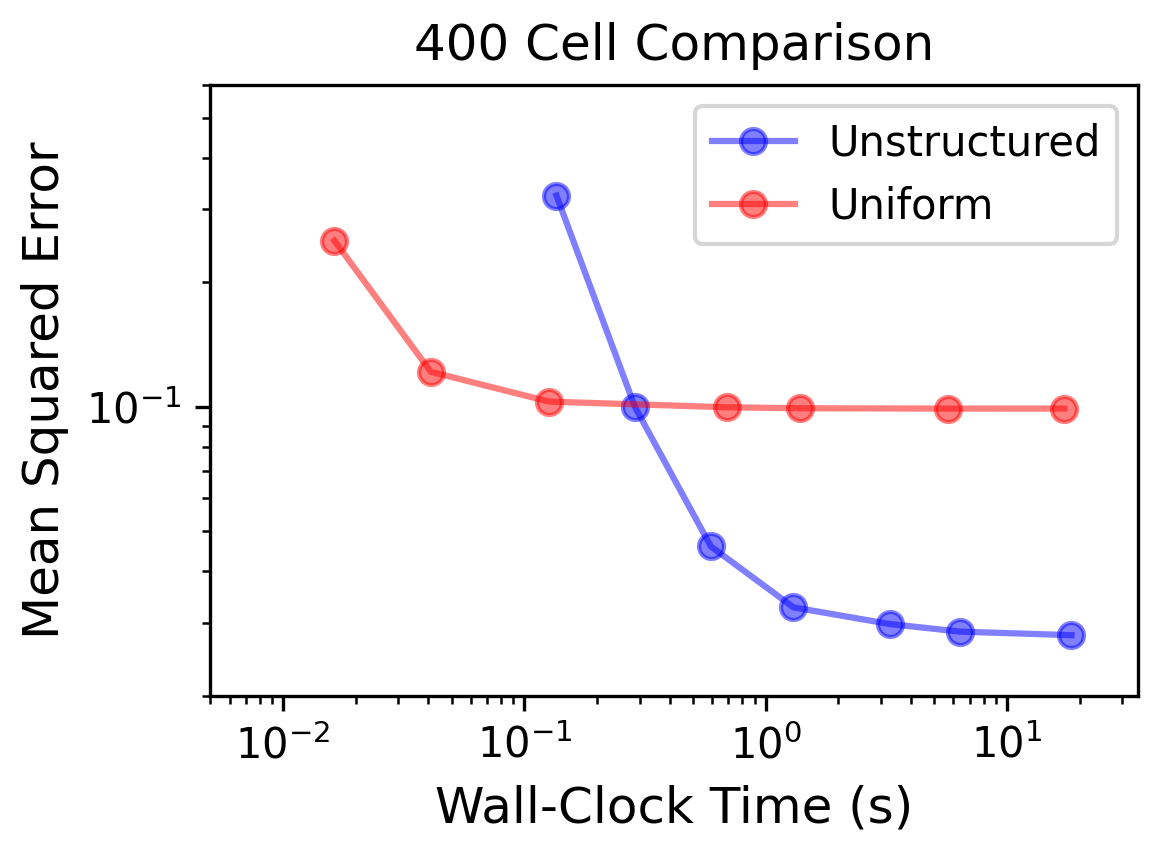}}
    \caption{Comparison between the uniform and unstructured mesh approaches for approximating the invariant density of \eqref{eq:cat}. In Figure \ref{fig:CatA} and Figure \ref{fig:CatB}, we consider a dataset with $N = 10^4$ initial conditions and $K = 10^3$ iterations of \eqref{eq:cat}, and in Figure \ref{fig:CatC}, we vary $K\in 10,10^4]$.  }
    
    \label{fig:Cat}
\end{figure}

In Figure \ref{fig:Cat}, we compare the  uniform and unstructured mesh approaches for approximating the invariant density $\rho^*$ from finite-sample data.  In particular, we consider a dataset of the form $\{\{S^{\ell}(x_k)\}_{k=1}^N\}_{\ell = 1}^{K},$ where $\{x_k\}_{k=1}^N\sim \mathcal{U}([0,1]^2)$ are i.i.d.~uniformly distributed initial conditions. In this example, $N$ represents the number of initial conditions and $K$ is the number of iterations of the map $S$ applied to each initial condition. Using this dataset, we construct the Markov matrix following~\eqref{eq:MC} and extract the approximate invariant measure as its dominant eigenvector. Even for small values of $K$, the data distribution is concentrated near the effective support of the invariant measure. Thus, we observe a fine resolution of the unstructured mesh cells near the region of highest density, whereas the uniform mesh covers all portions of the state-space equally; see Figure \ref{fig:CatA}. For a fixed number of cells, we find that the unstructured mesh achieves lower error than the uniform mesh, and is therefore more memory-efficient; see Figure \ref{fig:CatB}. The main computational cost of the unstructured mesh approach involves determining the cell containment of every sample. In Figure \ref{fig:CatC}, we vary the size of $K$ to study the trade-off in efficiency when using the unstructured mesh approach compared to the uniform mesh with a fixed number of $n = 400$ cells. We find that the unstructured mesh approach converges to a lower error solution with comparable computational cost for a reasonable mesh size.

\subsubsection{30-Dimensional Lorenz-96 System}

We now apply the unstructured mesh approach for approximating the PFO to a high-dimensional Lorenz-96 system \cite{karimi2010extensive}, given by 
\begin{equation}\label{eq:L96}
\dot{x}_i =   (x_{i+1}-x_{i-2})x_{i-1}-x_i + 8,
\end{equation}
where $x_0 = x_d$, $x_{-1} = x_{d-1},$ and $x_{d+1} = x_1$ and $d = 30$ is the state-dimension. Rather than recovering the approximate invariant measure from trajectory samples, as in Section \ref{subsubsec:cat}, we now reconstruct the parameterized vector field $v_{\theta}.$ Note that the FVM-based approach introduced in Section \ref{sec:PDE_IM} is intractable for solving this inverse problem in high-dimensions (see Table~\ref{tab:my_label}), so we instead rely on the unstructured mesh approach. Moreover, the improved computational efficiency of the data-adaptive mesh and Monte--Carlo sampling approach enables us to compare the full Markov matrices during optimization, rather than their dominant eigenvectors as in Section~\ref{sec:PDE_IM}.

\begin{figure}[h!]
    \centering
   \subfloat[Three dimensional projections of a ground truth trajectory (left) and a trajectory from our reconstructed model (right).]{\includegraphics[width=0.8\linewidth]{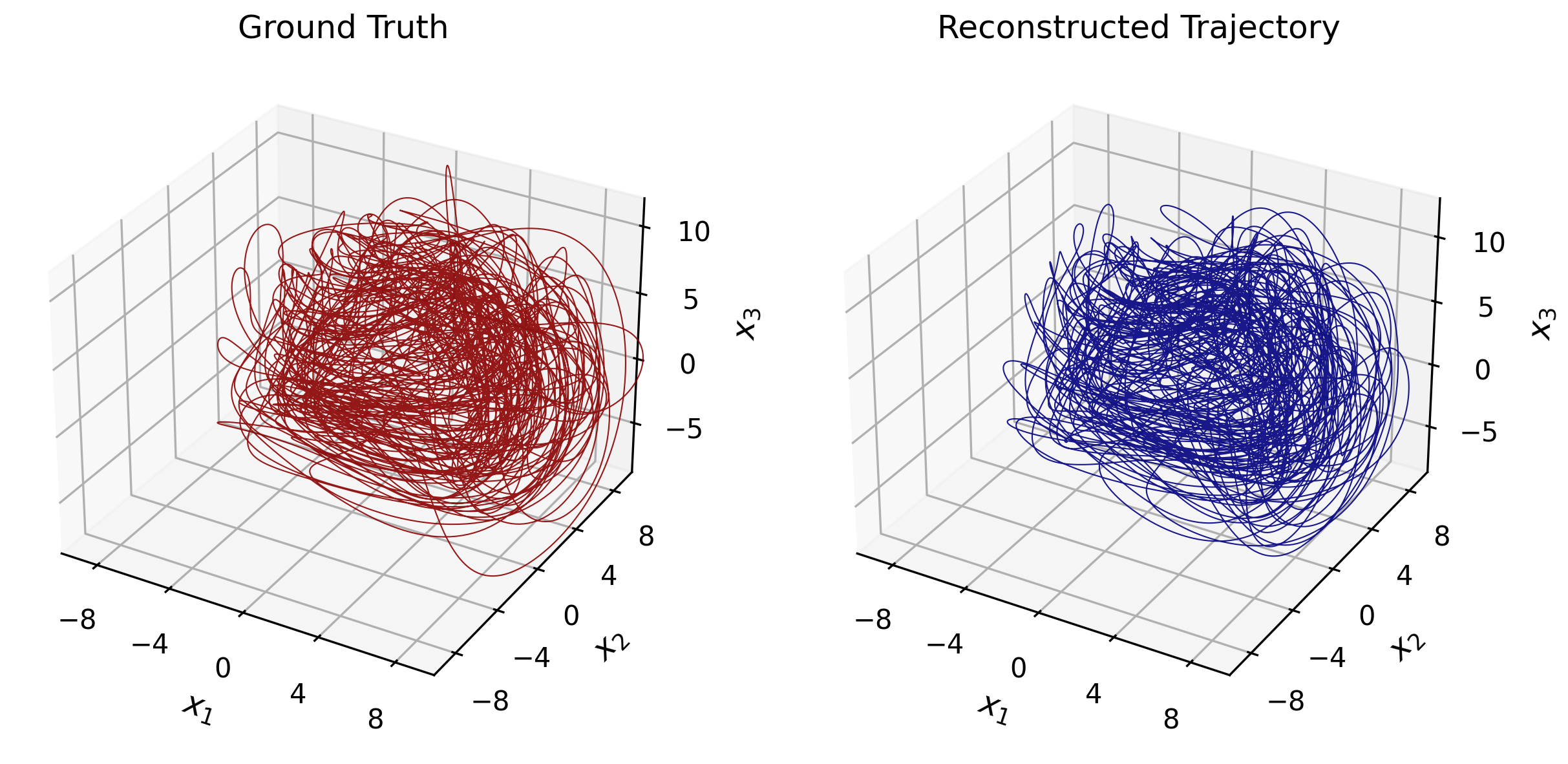} \label{fig:L96a}} \\\vspace{.2cm}
    \subfloat[Visualizations of a short time-trajectory from both the ground truth system (left) and our reconstructed model (right). Each row of the plot corresponds to a fixed state variable, with the color indicating its value at a given time.]{\includegraphics[width=0.8\linewidth]{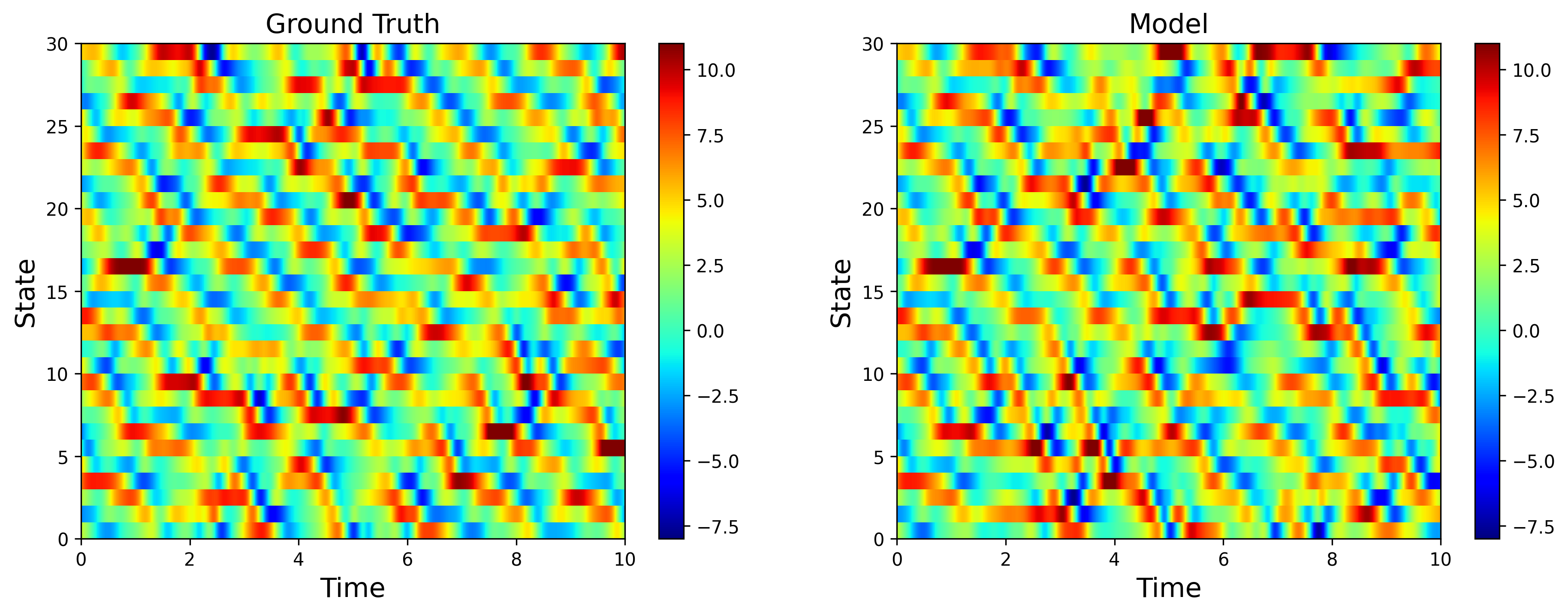} \label{fig:L96b}}
    \caption{Reconstructing the first component of the Lorenz-96 system's velocity $\dot{x}_1 = v_{\theta}(x)$ with a neural network parameterization. Figure \ref{fig:L96a} compares a projection of a long trajectory of the model with the ground truth, while Figure \ref{fig:L96b} compares the time-evolution of all 30 state variables over a short trajectory. In both comparisons, our model obtains close agreement with the ground truth system.}
    \label{fig:L96}
\end{figure}

Thus, if $v_{\theta}$ denotes the parameterized velocity we wish to reconstruct based upon the time-series measurements $\{x^*(t_i)\}_{i=1}^N$, where $t_i = t_0 + i\Delta t,$ we now solve 
\begin{equation}\label{eq:markopt}
    \min_{\theta\in \Theta} \mathcal{J}(\theta),\qquad \mathcal{J}(\theta) = \|M^{(\varepsilon)}(v_{\theta}) - M^*\|,
\end{equation} 
where $M^{(\varepsilon)}(v_{\theta})$ is the regularized Markov matrix constructed based upon the time $\Delta t$ flow map of the vector field $v_{\theta}$ and $M^*$ is the Markov matrix from the observed trajectory samples. We perform a $k$-means clustering of the observed trajectory $\{x^*(t_i)\}_{i=1}^N$ to obtain our unstructured mesh cells with centers $\{c_i\}_{i=1}^n$, which remain fixed during optimization. We then construct the partition of unity $\{\psi_i^{(\varepsilon)}\}_{i=1}^n$, which is used to form the regularized Markov matrix~\eqref{eq:cond_int}, by setting 
$$\psi_i^{(\varepsilon)}(x):= \frac{r_i(x;\varepsilon)}{\sum_{i=1}^nr_i(x;\varepsilon)},\qquad r_i(x;\varepsilon) = \log(1+e^{-|c_i - x|/\varepsilon}),\qquad x\in \mathbb{R}^d,\qquad \varepsilon > 0.$$
We find that choosing a relatively large value of $\varepsilon$ improves the robustness of the reconstruction~\eqref{eq:markopt} to noise in the observed data.

In Figure \ref{fig:L96}, we invert the first component of the 30-dimensional Lorenz-96 system's velocity via a neural network parameterization. We use the Adam optimizer \cite{kingma2014adam} to reduce the Markov matrix objective \eqref{eq:markopt} with $\varepsilon = 5$ and $ n = 200$ cells. That is, we parameterize $\dot{x}_1 = v_{\theta},$ assuming all other velocity components are known and seek to learn the map from $\mathbb{R}^{30}\to \mathbb{R}.$ To evaluate the success of our reconstructed model, we visualize a long trajectory of the 30-dimensional system in a projected coordinate frame (see Figure \ref{fig:L96a}) as well as a short time-series including all 30 state variables (see Figure~\ref{fig:L96b}). The results of Figure~\ref{fig:L96} show close agreement between our model and the ground truth, indicating that our proposed framework for approximating the PFO is well-equipped for modeling high-dimensional dynamical systems. 

\section{Invariant Measures in Time-Delay Coordinates for Unique Dynamical System Identification}\label{sec:delay_IM}

While Section \ref{sec:mesh} studied a computational extension of the PDE-constrained optimization approach to velocity reconstruction from invariant measures presented in Section \ref{sec:PDE_IM}, we now improve the well-posedness of the inverse problem. The inverse problem \eqref{eq:opt2} which is 
solved in Section \ref{sec:PDE_IM} turns out to be highly ill-posed, due to a lack of uniqueness; see Figure \ref{fig:torus}. In this section, we establish that under a simple change of coordinates, the inverse problem solution is unique. Along this direction, we introduce a theoretical framework that rigorously justifies the use of invariant measures in time-delay coordinates for reconstructing dynamical systems from data; see Figure \ref{fig:flowchart} for a flowchart highlighting our main contributions in this section. In Section \ref{subsec:invariant_delay}, we study how invariant measures are affected by a change of coordinates to precisely define what is meant by an invariant measure in time-delay coordinates, and we state our main theoretical results of the section. Discussions and numerical experiments follow in Section \ref{subsec:discussions2} and Section \ref{subsec:numerics3}, respectively.


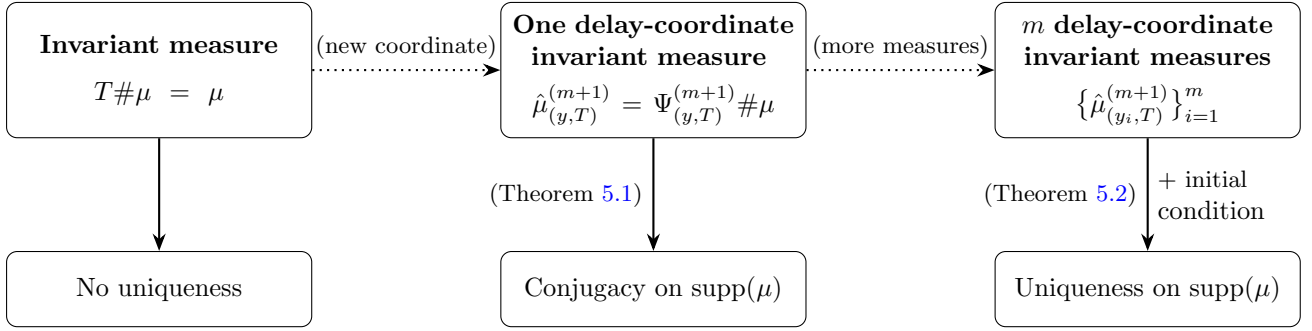
\begin{figure*}[htb!]
\begin{tikzpicture}[scale = 0.7,
  box/.style={draw, minimum width=3.8cm, minimum height=1.8cm, text centered, rounded corners, text width=3.8cm, align=center},
  subbox/.style={draw, minimum width=3.2cm, minimum height=1cm, text centered, rounded corners, text width=3.8cm, align=center},
  arrow/.style={-Stealth, thick},
dottedarrow/.style={-Stealth, thick, dotted},
  every node/.append style={font=\small},
  node distance=1.5cm
]

\node[box] (box1) {\textbf{Invariant measure}\vspace{.2cm}\\
$T\# \mu = \mu$};
\node[box, right=2.5cm of box1] (box2) {
 \textbf{One delay-coordinate invariant measure} \vspace{.2cm} \\ $\hat{\mu}_{(y,T)}^{(m+1)} = \Psi_{(y,T)}^{(m+1)}\# \mu$};
\node[box, right=2.5cm of box2] (box3) {
\textbf{$m$ delay-coordinate invariant measures}\\ \vspace{.2cm} $\big\{ \hat{\mu}_{(y_i,T)}^{(m+1)}\big\}_{i=1}^{m}$};

\node[subbox, below=of box1] (subbox1) {No uniqueness};
\node[subbox, below=of box2] (subbox2) {
Conjugacy on $\text{supp}(\mu)$};
\node[subbox, below=of box3] (subbox3) {
Uniqueness on $\text{supp}(\mu)$};

\draw[dottedarrow] (box1) -- node[above] {\footnotesize(new coordinate)} (box2);
\draw[dottedarrow] (box2) -- node[above] {\footnotesize(more measures)} (box3);
\draw[arrow] (box1) -- node[right,align = left] {} (subbox1);
\draw[arrow] (box2) -- node[left,align = left] {\footnotesize
 (Theorem \ref{thm:1})} (subbox2);
\draw[arrow] (box3) -- node[left,align = left] {\footnotesize
 (Theorem \ref{thm:2})} (subbox3);
\node[above=0.75cm of subbox3, right, align = left]{\footnotesize
 + initial\\
 condition};
\end{tikzpicture}
\caption{Flowchart of our main results. While  the invariant measure $\mu$ cannot uniquely identify the dynamical system, one delay-coordinate invariant measure can ensure topological conjugacy of the reconstructed system on $\text{supp}(\mu)$. Moreover, several delay-coordinate invariant measures can  uniquely determine the dynamics on $\text{supp}(\mu)$, with an appropriate initial condition. }
\label{fig:flowchart}
\end{figure*}
\subsection{Invariant Measures in Time-Delay Coordinates}\label{subsec:invariant_delay}
We will next study how invariant measures are transformed under a coordinate change of the dynamics. The dynamical systems $T:X\to X$ and $S:Y\to Y$ are said to be \textit{topologically conjugate} (related by a change of coordinates) if there exists a homeomorphism $h:X\to Y$, such that $S = h\circ T \circ h^{-1}$. The conjugating map $h$ should be viewed as a nonlinear coordinate change between the spaces $X$ and $Y$ which maps between trajectories of the systems $T$ and $S$. The following proposition relates the invariant measures of conjugate dynamical systems via the pushforward of the conjugating map.
\begin{proposition}\label{prop:1}
If $S = h\circ T \circ h^{-1}$ and $\mu$ is $T$-invariant, then $h\# \mu$ is $S$-invariant. Moreover, if $\mu$ is $T$-ergodic, then $h\# \mu$ is additionally $S$-ergodic.
\end{proposition}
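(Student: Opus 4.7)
The proof plan is a direct unraveling of definitions, exploiting the fact that conjugacy implies the relation $S \circ h = h \circ T$, which passes cleanly through preimages and pushforwards. I will handle the invariance and ergodicity claims in order, since the second essentially reuses the preimage identity established in the first.

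For the invariance part, I would start by observing that for any Borel set $B \subseteq Y$,
\[
(S\#(h\#\mu))(B) = (h\#\mu)(S^{-1}(B)) = \mu(h^{-1}(S^{-1}(B))) = \mu((S\circ h)^{-1}(B)).
\]
Since $h$ is a homeomorphism, the conjugacy relation $S = h\circ T \circ h^{-1}$ yields $S\circ h = h\circ T$, and hence $(S\circ h)^{-1}(B) = T^{-1}(h^{-1}(B))$. Combining these with the $T$-invariance of $\mu$ gives
\[
(S\#(h\#\mu))(B) = \mu(T^{-1}(h^{-1}(B))) = \mu(h^{-1}(B)) = (h\#\mu)(B),
\]
which is exactly $S$-invariance of $h\#\mu$.

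For ergodicity, suppose $\mu$ is $T$-ergodic and let $B \subseteq Y$ be $S$-invariant, i.e., $S^{-1}(B) = B$. The plan is to transport this invariance back to $X$ via $h^{-1}$. Applying $h^{-1}$ to both sides of $S^{-1}(B) = B$ and using the identity $h^{-1}\circ S^{-1} = (S\circ h)^{-1} = (h\circ T)^{-1} = T^{-1}\circ h^{-1}$ derived above, I obtain $T^{-1}(h^{-1}(B)) = h^{-1}(B)$. Thus $h^{-1}(B)$ is a $T$-invariant set, so by ergodicity of $\mu$ we have $\mu(h^{-1}(B)) \in \{0,1\}$. Since $(h\#\mu)(B) = \mu(h^{-1}(B))$, this shows $(h\#\mu)(B) \in \{0,1\}$, establishing $S$-ergodicity.

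There is essentially no obstacle here, since $h$ being a homeomorphism (in particular a bijection whose inverse is measurable) lets us freely move between preimages under $h$, $T$, and $S$. The only minor care point is that the conjugacy relation must be used in the form $S\circ h = h\circ T$ rather than requiring invertibility of $T$ itself; once this is noted, both claims follow from a single preimage identity.
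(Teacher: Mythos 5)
Your proof is correct and follows essentially the same route as the paper's: both reduce $S$-invariance and $S$-ergodicity to the single preimage identity $h^{-1}\circ S^{-1} = T^{-1}\circ h^{-1}$ derived from the conjugacy relation, then apply $T$-invariance (resp.\ $T$-ergodicity) of $\mu$. The paper writes the algebra as $S^{-1} = h\circ T^{-1}\circ h^{-1}$ and cancels $h^{-1}\circ h$, while you write $S\circ h = h\circ T$ and take preimages, but this is a cosmetic difference only.
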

The full proof of Proposition \ref{prop:1} appears in Appendix \ref{appendix:lemmas}.
Proposition \ref{prop:1} motivates a key insight surrounding the issue of uniqueness when using invariant measures to compare dynamical systems. If $\mu\in \mathscr{P}(X)$ is invariant under both $T:X\to X$ and $S:X\to X$, then we cannot distinguish between the dynamical systems by studying the invariant measure alone. To resolve this challenge, we can instead make a change of coordinates and study the invariant measures of related, conjugate dynamical systems. That is, we will construct maps $h_T,h_S:X\to Y$, which are homeomorphic onto their image, and consider the invariant measures, $h_T\#\mu\in\mathscr{P}(Y)$ and $h_S\#\mu\in\mathscr{P}(Y)$, of the resulting conjugate systems, $\hat{T} = h_T \circ T \circ h_T^{-1}$ and $\hat{S} = h_S \circ S \circ h_S^{-1}$. Our ability to distinguish between $T$ and $S$ via $h_T\# \mu$ and $h_S\# \mu$ depends on how the conjugating maps $h_T$ and $h_S$ are chosen. In what follows, we argue that a powerful choice for our purpose is given by the delay-coordinate map, originating from Takens' seminal embedding theory~\cite{Takens1981DetectingSA}. 

\begin{definition}[Time-delay map]\label{def:time_delay}
    Consider a Polish space $X$, a map $T:X\to X$, an observable function $y:X\to \mathbb{R}$, and the time delay parameter $m\in\mathbb{N}$. The \textit{time-delay map} is defined as 
\begin{equation}\label{eq:delay_map}
    \Psi_{(y,T)}^{(m)}(x):=(y(x),y(T(x)),\dots, y(T^{m-1}(x)))\in \mathbb{R}^m,
    \end{equation}
    for each $x\in X$. 
\end{definition}
In \eqref{eq:delay_map}, we stress the dependence of the time-delay map $\Psi_{(y,T)}^{(m)}$ on the scalar observation function $y:X\to \mathbb{R}$, the underlying system $T:X\to X$, and the dimension $m\in\mathbb{N}$. 
When $T$ is continuous and the time-delay map $\Psi_{(y,T)}^{(m)}$ is injective, one can build a dynamical system in the reconstruction space $\mathbb{R}^m$ based on the following definition. 
\begin{definition}[Delay-coordinate dynamics]\label{def:delay_dynamics}
    Assume that $\Psi_{(y,T)}^{(m)}:X\to \mathbb{R}^m$, given in~\eqref{eq:delay_map}, is injective. Then, the \textit{delay-coordinate dynamics} are given by
    \begin{equation*}\label{eq:tdd}
\widehat{T}_{(y,m)}:\Psi_{(y,T)}^{(m)}(X)\to \Psi_{(y,T)}^{(m)}(X),\qquad \widehat{T}_{(y,m)}:= \Big[\Psi_{(y,T)}^{(m)}\Big] \circ T \circ \Big[\Psi_{(y,T)}^{(m)}\Big]^{-1}.
    \end{equation*}
\end{definition}

Since the delay-coordinate map $\widehat{T}_{(y,m)}$ is conjugate to the state-coordinate map $T:X\to X$, we now use Proposition~\ref{prop:1} to motivate our definition for an invariant measure in time-delay coordinates. When $T\#\mu =\mu$, the invariant measure in time-delay coordinates should be viewed as the corresponding invariant measure of the conjugate system $\widehat{T}_{(y,m)}$ given by the pushforward of $\mu$ under the time-delay map~\eqref{def:time_delay}; see Definition \ref{def:delay_im} below. Moreover, an illustration showing the difference between the state-coordinate invariant measure and the delay-coordinate invariant measure for the Lorenz-63 system is shown in Figure~\ref{fig:lorenz_visual}.

\begin{definition}[Invariant Measure in Time-Delay Coordinates]\label{def:delay_im}
      Assume that the delay map $\Psi_{(y,T)}^{(m)}:X\to \mathbb{R}^m$ in Definition~\ref{def:time_delay} is injective. Then, the probability measure 
      \begin{equation}\label{eq:tdim}
          \hat{\mu}_{(y,T)}^{(m)}:=\Psi_{(y,T)}^{(m)}\# \mu \in \mathcal{P}(\mathbb{R}^m)
      \end{equation} is the corresponding \textit{invariant measure in the time-delay coordinates.} We also refer to \eqref{eq:tdim} as a \textit{delay-coordinate invariant measure.} 
\end{definition}
\begin{figure}[h!]
    \centering
    \includegraphics[width=.8\linewidth]{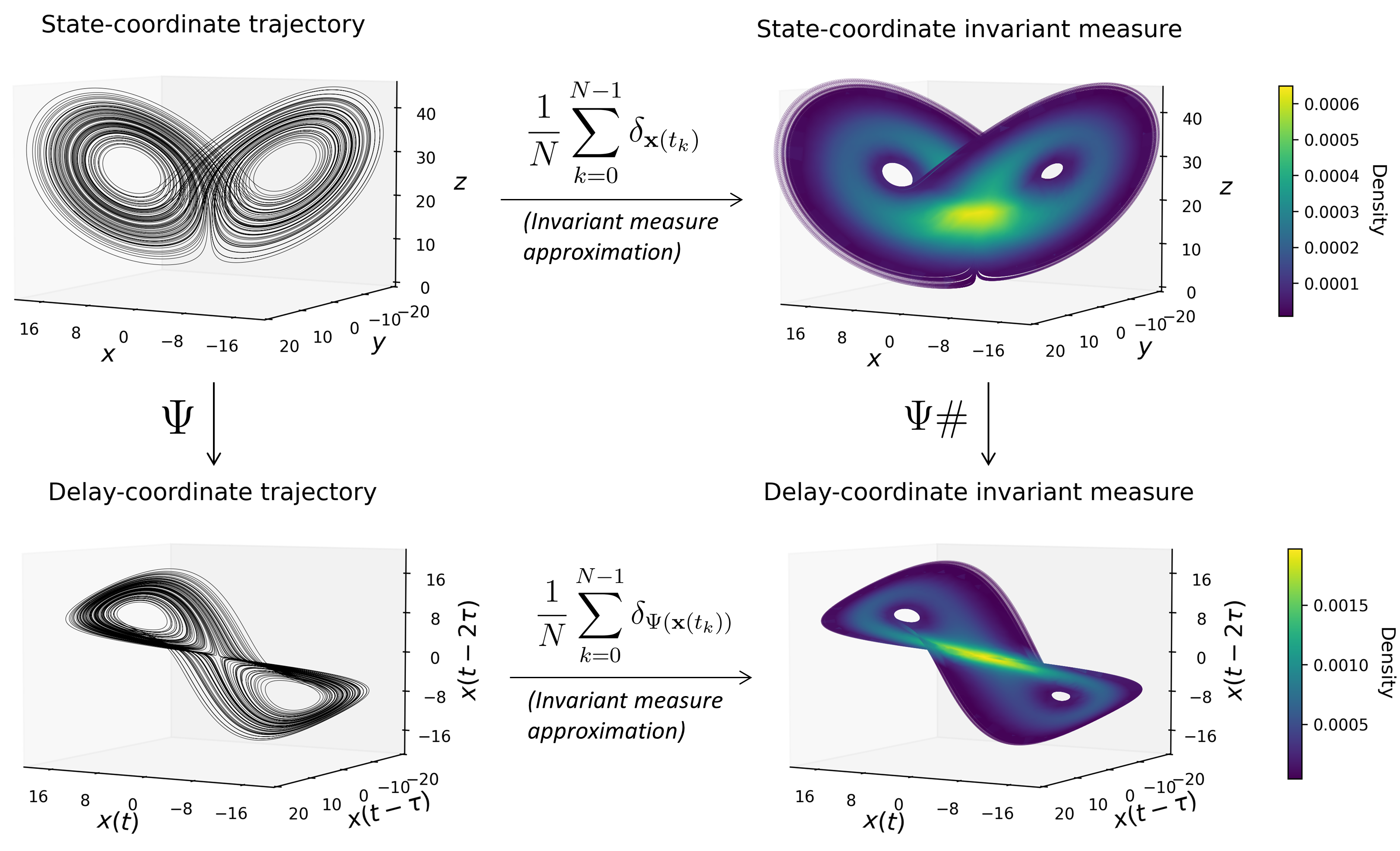}
    \caption{Visualization of the state-coordinate (top row) and delay-coordinate (bottom row) invariant measures for the Lorenz-63 system. Here, $\Psi$ is the delay-coordinate map. }
    \label{fig:lorenz_visual}
\end{figure}

We are now ready to state our main results of the section, which we formulate using the generalized version of Takens' theorem by Sauer, Yorke, and Casdagli; see Theorem \ref{thm:takens}. In what follows, we assume that $T,S:U\to U$ are diffeomorphisms of an open set $U \subseteq \mathbb{R}^n$, where $\mu\in\mathscr{P}(U)$ is $T$-invariant, $\nu\in\mathscr{P}(U)$ is $S$-invariant, and $\textup{supp}(\mu),\text{supp}(\nu) \subseteq U$ are compact. Moreover, we will assume that the embedding dimension $m\in\mathbb{N}$ has been chosen to satisfy $m > 2 \max\{d_{\mu},d_{\nu}\}$, where $d_{\mu} = \text{boxdim}(\text{supp}(\mu))$, is the essential dimension of $\text{supp}(\mu)$.   Lastly, we place a mild assumption on the growth of the number of periodic points of $T$ and $S$, which is standard in the literature of embedding theory; see Assumption \ref{assumption:1}.

Theorem \ref{thm:1} states that the equality of two invariant measures in time-delay coordinates implies that the underlying dynamical systems are topologically conjugate on the supports of their respective invariant measures. 
\begin{theorem}\label{thm:1}
      The equality   $\hat{\mu}_{(y,T)}^{(m+1)} = \hat{\nu}_{(y,S)}^{(m+1)}$ implies  $T|_{\textup{supp}(\mu)}$ and $S|_{\textup{supp}(\nu)}$ are topologically conjugate, for almost every $y\in C^1(U,\mathbb{R})$.
\end{theorem}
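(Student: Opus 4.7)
The plan is to use the delay-embedding maps to explicitly construct a topological conjugacy between $T|_{\textup{supp}(\mu)}$ and $S|_{\textup{supp}(\nu)}$, reducing the problem to showing that the shift-like delay-coordinate dynamics agree on a common support. First, since $m > 2\max\{d_\mu, d_\nu\}$ implies both $m+1 > 2 d_\mu, 2 d_\nu$ and $m > 2 d_\mu, 2 d_\nu$, two applications of Theorem~\ref{thm:takens} (invoking Assumption~\ref{assumption:1}) yield a prevalent set of observables $y \in C^1(U,\mathbb{R})$ for which $\Psi_{(y,T)}^{(m)}$ and $\Psi_{(y,T)}^{(m+1)}$ are simultaneously injective on $\textup{supp}(\mu)$, and likewise for the $S$-system on $\textup{supp}(\nu)$. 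Compactness of the supports then upgrades each such restriction to a homeomorphism onto its image.

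Next, I would invoke the standard fact that $\textup{supp}(f\#\mu) = f(\textup{supp}(\mu))$ when $f$ is continuous and $\textup{supp}(\mu)$ is compact, to deduce
\[ K \;:=\; \textup{supp}\bigl(\hat{\mu}^{(m+1)}_{(y,T)}\bigr) \;=\; \Psi_{(y,T)}^{(m+1)}(\textup{supp}(\mu)) \;=\; \Psi_{(y,S)}^{(m+1)}(\textup{supp}(\nu)), \]
where the second equality uses the hypothesis on delay-coordinate invariant measures. This lets me define the candidate conjugacy $h := \bigl[\Psi_{(y,S)}^{(m+1)}\bigr]^{-1} \circ \Psi_{(y,T)}^{(m+1)} \colon \textup{supp}(\mu) \to \textup{supp}(\nu)$, which is a homeomorphism by construction. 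Chasing the conjugacy relations for the delay-coordinate dynamics from Definition~\ref{def:delay_dynamics}, $h \circ T = S \circ h$ is seen to be equivalent to the coincidence $\widehat{T}_{(y,m+1)} = \widehat{S}_{(y,m+1)}$ as self-maps of $K$. Both maps share the shift form $(z_0,\dots,z_m) \mapsto (z_1,\dots,z_m,\alpha(z))$, so the entire argument reduces to proving $\alpha_T = \alpha_S$ on $K$.

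The main obstacle is this last equality, and the strategy is to leverage the injectivity of the \emph{$m$-dimensional} delay maps on top of the $(m+1)$-dimensional ones. Consider the projection $\pi \colon K \to \mathbb{R}^m$ that drops the first coordinate. Since $\pi \circ \Psi_{(y,T)}^{(m+1)} = \Psi_{(y,T)}^{(m)} \circ T$ and $T(\textup{supp}(\mu)) = \textup{supp}(\mu)$ (by $T$-invariance of $\mu$ and $T$ being a diffeomorphism), injectivity of $\Psi_{(y,T)}^{(m)}$ on $\textup{supp}(\mu)$ forces $\pi$ to be injective on $K$. Running the same argument through the $S$-system forces $\pi(K) = \Psi_{(y,T)}^{(m)}(\textup{supp}(\mu)) = \Psi_{(y,S)}^{(m)}(\textup{supp}(\nu)) =: K_m$, and (since $K$ is compact) makes $\pi \colon K \to K_m$ a homeomorphism. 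Its inverse has the form $w \mapsto (\gamma(w), w)$ for some continuous $\gamma \colon K_m \to \mathbb{R}$, and crucially $\gamma$ is intrinsic to the set $K$, independent of the choice of $T$ or $S$.

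Finally, since $\widehat{T}_{(y,m+1)}(z)$ and $\widehat{S}_{(y,m+1)}(z)$ both lie in $K = \{(\gamma(w), w) : w \in K_m\}$ and share the prefix $(z_1,\dots,z_m)$, applying $\gamma$ yields $z_1 = \gamma(z_2,\dots,z_m,\alpha_T(z)) = \gamma(z_2,\dots,z_m,\alpha_S(z))$. I then close the argument by showing that $\gamma$ is injective in its last argument: if $\gamma(u,a) = \gamma(u,b) = c$ with $(u,a),(u,b) \in K_m$, the two points $(c, u, a), (c, u, b) \in K$ would pull back under $\Psi_{(y,T)}^{(m+1)}$ to distinct states $x, x' \in \textup{supp}(\mu)$ having identical $\Psi_{(y,T)}^{(m)}$-images, contradicting the injectivity of $\Psi_{(y,T)}^{(m)}$ on $\textup{supp}(\mu)$. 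This forces $\alpha_T(z) = \alpha_S(z)$ for every $z \in K$, hence $h \circ T = S \circ h$. I expect this final injectivity step to be the most delicate point, since it is precisely where the strength of the dimension bound $m > 2\max\{d_\mu,d_\nu\}$—allowing Takens to apply at embedding dimension $m$ as well as $m+1$—is used in an essential way.
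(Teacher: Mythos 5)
Your argument is correct, and it rests on the same three ingredients the paper uses: Takens' theorem at embedding dimension $m$ (injectivity of $\Psi^{(m)}_{(y,T)}$ on $\textup{supp}(\mu)$ and of $\Psi^{(m)}_{(y,S)}$ on $\textup{supp}(\nu)$ for prevalent $y$), the pushforward-support lemma combined with the hypothesis to obtain $\Psi^{(m+1)}_{(y,T)}(\textup{supp}(\mu)) = \Psi^{(m+1)}_{(y,S)}(\textup{supp}(\nu))$, and the overlap between the first $m$ and last $m$ entries of an $(m+1)$-dimensional delay vector. The difference is organizational. The paper defines the conjugacy $\Theta_y$ via the $m$-dimensional delay maps and then, for each $x\in\textup{supp}(\nu)$, matches its $(m+1)$-dimensional delay vector against one from $\textup{supp}(\mu)$, reads off the first and last $m$ components, and rearranges into the conjugacy identity; you instead fix the common image $K$, represent both reconstructed systems as shift maps on $K$, and express $K$ as the graph of a section map $\gamma$ over the truncated set $K_m$, deriving $\alpha_T=\alpha_S$ from injectivity of $\gamma$ in its last slot. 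That injectivity step is precisely where the paper appeals to $m$-dimensional Takens injectivity by equating the first $m$ components, so the underlying mechanism is identical; in fact your $h$ equals $\Theta_y^{-1}$. One minor redundancy: invoking Takens separately at dimension $m+1$ is unnecessary, since injectivity of $\Psi^{(m)}_{(y,T)}$ on a set automatically gives injectivity of $\Psi^{(m+1)}_{(y,T)}$ there (the latter merely appends one more coordinate), and likewise for $S$; the paper correspondingly invokes the theorem only at dimension $m$.
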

    The full proof of Theorem \ref{thm:1} appears in Appendix \ref{appendix:lemmas}. It is worth noting that the required embedding dimension to form the delay-coordinate invariant measures in Theorem~\ref{thm:1} is one dimension greater than the dimension in the generalized Takens' theorem in~\cite{sauer1991embedology}. Indeed, the key to our proof is the observation that a single point 
\begin{equation*}\label{eq:point_ex}
    (\lefteqn{\underbrace{\phantom{y(x),y(T(x)),\dots, y(T^{m-1}(x))}}_{\Psi_{(y,T)}^{(m)}(x)}}y(x),\overbrace{y(T(x)),\dots, y(T^{m-1}(x)), y(T^m(x))}^{\Psi_{(y,T)}^{(m)}(T(x))})\in\mathbb{R}^{m+1}
\end{equation*} in $m+1$ dimensional delay coordinates determines both $\Psi_{(y,T)}^{(m)}(x)$ and $\Psi_{(y,T)}^{(m)}(T(x))$. Together, these quantities account for one forward step of the dynamics in $m$-dimensional delay coordinates. 

For our next result, we consider the case when $\nu = \mu$, i.e., when both systems $T$ and $S$ share the same invariant measure. While Theorem \ref{thm:1} assumed access to only a single observable $y\in C^1(U,\mathbb{R})$, our second main result shows that the invariant measures in time-delay coordinates derived from a finite set of such observables, along with a suitable initial condition, uniquely determine the underlying system on $\text{supp}(\mu)$. 
 \begin{theorem}\label{thm:2}
    The conditions 
     \begin{enumerate}
         \item there exists $x^*\in B_{\mu,T}\cap \textup{supp}(\mu)$, such that $T^k(x^*) = S^k(x^*)$ for $1\leq k \leq m-1$, and 
   \item $\hat{\mu}_{(y_j,T)}^{(m+1)} = \hat{\mu}_{(y_j,S)}^{(m+1)}$ for $1\leq j \leq m$, where $Y:=(y_1,\dots, y_m)$ is a vector-valued observable,
     \end{enumerate}
imply  that $T = S$ everywhere on $\textup{supp}(\mu)$, for almost every $Y\in C^1(U,\mathbb{R}^m)$.
 \end{theorem}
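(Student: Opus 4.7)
The plan is to use conditions 1 and 2 to inductively recover the entire forward orbit $\{S^k(x^*)\}_{k\ge 0}$ from $\{T^k(x^*)\}_{k\ge 0}$, and then to promote the resulting pointwise identity $T=S$ along this orbit to all of $\textup{supp}(\mu)$ by a density-and-continuity argument. To begin, I would record three injectivity properties that, by the Whitney-type embedding theorem of \cite{sauer1991embedology} (available since $m>2d_\mu$) and Theorem~\ref{thm:takens} (applied to each of $T$ and $S$), hold on $\textup{supp}(\mu)$ for almost every $Y\in C^1(U,\mathbb{R}^m)$: the vector observable $Y$ itself is injective, and for each component $y_j$ the $m$-dimensional delay maps $\Psi_{(y_j,T)}^{(m)}$ and $\Psi_{(y_j,S)}^{(m)}$ are injective. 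Two further observations, used throughout, are: (a) compactness of $\textup{supp}(\mu)$ together with continuity of the delay maps gives $\textup{supp}\big(\hat{\mu}_{(y_j,S)}^{(m+1)}\big) = \Psi_{(y_j,S)}^{(m+1)}(\textup{supp}(\mu))$; and (b) since $\mu$ is $T$- and $S$-invariant, $\textup{supp}(\mu)$ is forward invariant under both maps.

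The heart of the argument is an induction on $k\ge 0$ establishing $T^k(x^*)=S^k(x^*)$, with the base cases $0\le k\le m-1$ supplied by condition~1. For the step $K\mapsto K+1$ with $K\ge m-1$, fix $j\in\{1,\dots,m\}$ and set $p := \Psi_{(y_j,T)}^{(m+1)}(T^{K-m+1}(x^*))$. By the inductive hypothesis (and using that $T^{K-m+1}(x^*)=S^{K-m+1}(x^*)$), the first $m$ entries of $p$ equal those of $\Psi_{(y_j,S)}^{(m+1)}(S^{K-m+1}(x^*))$. Condition~2 together with observation (a) writes $p = \Psi_{(y_j,S)}^{(m+1)}(x'')$ for some $x''\in\textup{supp}(\mu)$; matching the first $m$ coordinates and using injectivity of $\Psi_{(y_j,S)}^{(m)}$ forces $x'' = S^{K-m+1}(x^*)$, so comparing the last coordinate gives $y_j(T^{K+1}(x^*)) = y_j(S^{K+1}(x^*))$. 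Since this holds for every $j$, injectivity of $Y$ on $\textup{supp}(\mu)$ (together with observation (b), which places both iterates in $\textup{supp}(\mu)$) yields $T^{K+1}(x^*) = S^{K+1}(x^*)$, closing the induction.

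To finish, I would use that $x^*\in B_{\mu,T}\cap \textup{supp}(\mu)$ and the defining limit in Definition~\ref{def:basin} to show that $\{T^k(x^*)\}_{k\ge 0}$ is dense in $\textup{supp}(\mu)$: if some open ball $B\subseteq\textup{supp}(\mu)$ were disjoint from the orbit, a continuous bump supported in $B$ would have vanishing time average along $x^*$ yet strictly positive spatial average against $\mu$, a contradiction. Given any $x\in\textup{supp}(\mu)$, one then selects $k_n\to\infty$ with $T^{k_n}(x^*)\to x$, and continuity of $T$ and $S$ applied to the identity $T^{k_n+1}(x^*)=S^{k_n+1}(x^*)$ yields $T(x)=S(x)$.

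The main obstacle is the inductive step, which repurposes the ``one extra delay'' device behind Theorem~\ref{thm:1}: there a single observable and one extra delay coordinate encode one step of the dynamics, whereas here I must leverage all $m$ observables in parallel to produce $m$ scalar equalities $y_j\circ T^{K+1}(x^*) = y_j\circ S^{K+1}(x^*)$, and then invoke Whitney-style injectivity of the vector observable $Y$ --- precisely enabled by having $m$ components --- to upgrade these to the state-space equality $T^{K+1}(x^*) = S^{K+1}(x^*)$. A minor technicality is checking that the intersection of the three prevalence conditions on $y_1,\dots,y_m$ and on $Y$ remains prevalent in $C^1(U,\mathbb{R}^m)$; this follows from standard closure of prevalence under finite intersections.
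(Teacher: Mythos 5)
Your proposal is correct, and it reaches the conclusion by a noticeably different route than the paper. The paper first invokes Theorem~\ref{thm:1} as a black box to produce, for each $y_i$, a conjugating homeomorphism $\Theta_{y_i}$ with $S|_{\textup{supp}(\mu)} = \Theta_{y_i}^{-1}\circ T|_{\textup{supp}(\mu)}\circ\Theta_{y_i}$, then uses condition~1 together with the explicit form $\Theta_{y_i} = [\Psi_{(y_i,T)}^{(m)}]^{-1}\circ\Psi_{(y_i,S)}^{(m)}$ to deduce $\Theta_{y_i}(x^*)=x^*$, which after rearrangement gives $y_i(S^k(x^*))=y_i(T^k(x^*))$ for all $k$ and all $i$, and hence $S^k(x^*)=T^k(x^*)$ by injectivity of $Y$. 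You instead bypass Theorem~\ref{thm:1} entirely and re-derive the orbit equality by a self-contained induction: at step $K\mapsto K+1$ you invoke equality of the $(m+1)$-dimensional delay-measure supports, pin down the preimage $x''$ using injectivity of $\Psi_{(y_j,S)}^{(m)}$ alone, and read off the last coordinate to get $y_j(T^{K+1}(x^*))=y_j(S^{K+1}(x^*))$ for each $j$. This is in effect an ``unrolled'' version of the mechanism underlying Theorem~\ref{thm:1}, applied along one orbit rather than globally; it is a bit more hands-on but conceptually transparent, and it exposes exactly which injectivities are actually needed ($\Psi_{(y_j,S)}^{(m)}$ and $Y$, but not $\Psi_{(y_j,T)}^{(m)}$). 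For the promotion from the orbit to all of $\textup{supp}(\mu)$, the paper's Lemma~\ref{lemma:7} feeds $\phi(x)=\|S(x)-T(x)\|_2$ into the Birkhoff average defining $B_{\mu,T}$ and concludes $S=T$ $\mu$-a.e., then uses closedness of $\{S=T\}$. Your version first shows the $T$-orbit of $x^*$ is dense in $\textup{supp}(\mu)$ (using a Urysohn bump and the same Birkhoff identity) and then passes to the limit using continuity of $T$ and $S$; this is equivalent in strength but requires the small extra step of producing the bump function, whereas Lemma~\ref{lemma:7} applies the basin property directly to the ``right'' test function. Both the prevalence bookkeeping and the conclusion match the paper's; the Whitney-on-an-open-set technicality that the paper handles via its Lemma~\ref{lemma:extend} is implicitly needed in your invocation of the Whitney-type theorem, and the closure of prevalence under finite intersection you mention corresponds to Lemma~\ref{lemma:prevalence}.
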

 Our proof of Theorem \ref{thm:2}, which is presented in Appendix \ref{appendix:lemmas} uses Theorem \ref{thm:1}, along with a generalized Whitney embedding theorem \cite{sauer1991embedology}.

\subsection{Discussion}\label{subsec:discussions2}
Theorems~\ref{thm:1} and~\ref{thm:2} are our main results for comparing dynamical systems through delay-coordinate invariant measures, based upon uniform time-delay embeddings. We remark that our results hold in great generality, as we place no assumptions on the invariant measures under consideration. In particular, these statements still hold when the invariant measures are singular with respect to the Lebesgue measure and have fractal support, a common situation for attracting dynamical systems. Moreover, we have formulated our results using the mathematical theory of prevalence~\cite{hunt1992prevalence}, arguing that the conclusions of Theorems \ref{thm:1} and \ref{thm:2} hold for almost all observation functions. Practitioners often have little control over the measurement device used for data collection, so our proposed approach of comparing dynamical systems using invariant measures in time-delay coordinates remains broadly applicable.

In Theorem~\ref{thm:1}, we show that the equality of two invariant measures in time-delay coordinates implies a topological conjugacy of the underlying dynamical systems. This represents a significant advancement compared to standard state-coordinate invariant measure matching. One of the primary shortcomings of \cite{greve2019data,yang2023optimal}, which perform system identification by comparing state-coordinate invariant measures, is the inability of such approaches to enforce ergodicity of the reconstructed system. Since ergodicity is preserved under topological conjugacy (see Proposition~\ref{prop:1}), the approach of comparing invariant measures in time-delay coordinates, inspired by Theorem~\ref{thm:1}, resolves this challenge.

It is also worth noting that the invariant measures in delay coordinates used in Theorem \ref{thm:1} can be constructed entirely through partial observations of the full state. This property is essential in large-scale applications when the full state is not directly observable. However, in certain situations when one can probe the full state, Theorem \ref{thm:2} tells us that the delay-coordinate invariant measures corresponding to a finite set of observation functions contain sufficient information to reconstruct the underlying dynamics, provided that a suitable initial condition also holds. The condition $T^k(x^*) = S^k(x^*)$, for $1\leq k \leq m-1$, appearing in Theorem \ref{thm:2} is relatively mild, only requiring that the two systems agree for finitely many iterations at some initial condition $x^*\in \text{supp}(\mu)\cap B_{\mu,T}.$

\subsection{Numerical Results}\label{subsec:numerics3}

In Figure~\ref{fig:lorenz_compare} we compare the ability of loss functions based on (i) the state-coordinate invariant measure and (ii) the delay-coordinate invariant measure to reconstruct the full dynamics of the Lorenz-63 system.

\begin{figure}[h!]
    \centering
    \includegraphics[width=.8\linewidth]{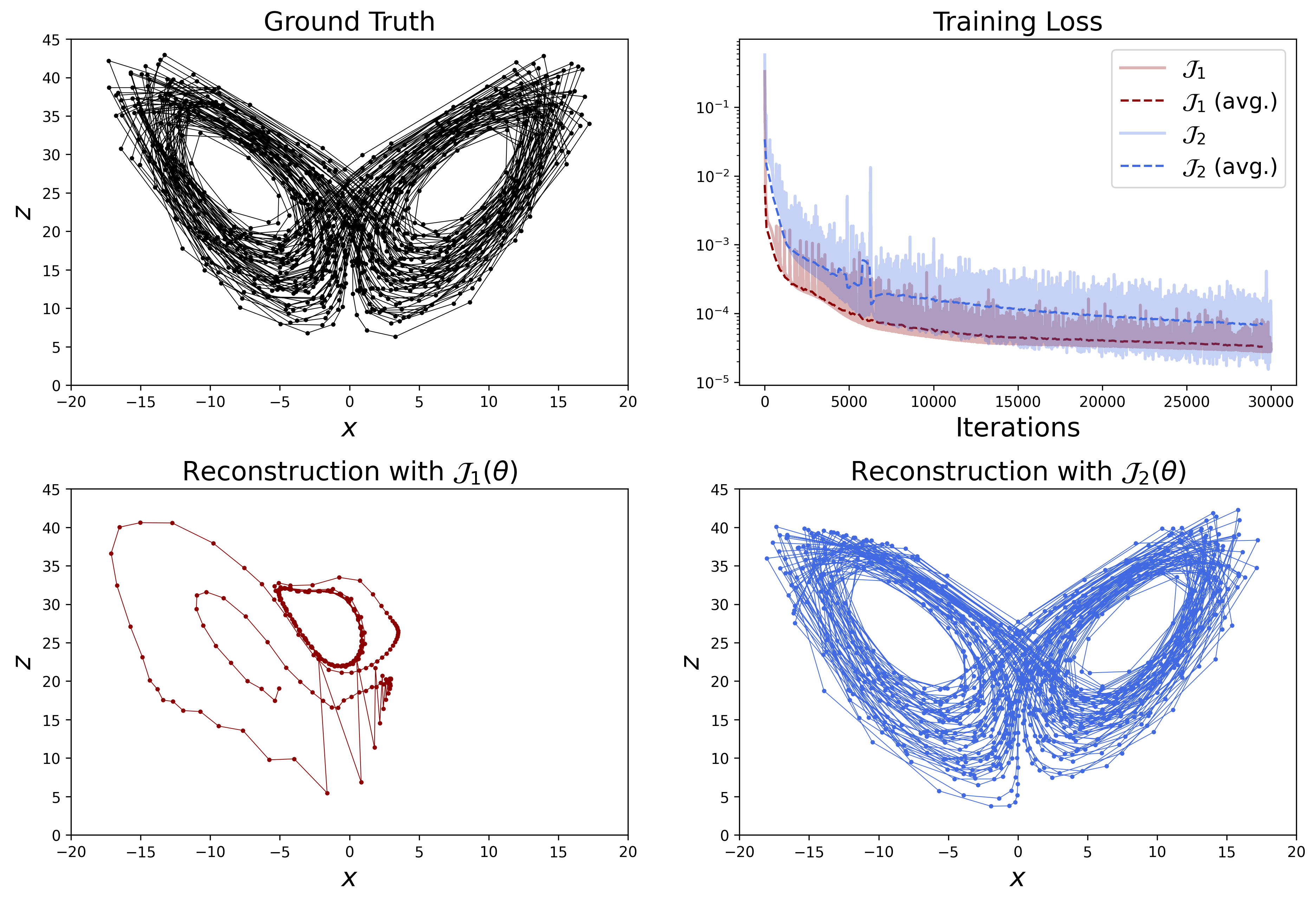}
    \caption{ While the loss $\mathcal{J}_1$, based only on the state-coordinate invariant measure, is insufficient for reconstructing the dynamics (bottom left), the loss $\mathcal{J}_2,$ which enforces equality of the delay-coordinate invariant measures, yields an accurate reconstruction (bottom right). Both models have identical initialization, architecture, and are trained using  the Adam optimizer \cite{kingma2014adam}. }
    \label{fig:lorenz_compare}
\end{figure}
In particular, we parameterize $T_{\theta}:\mathbb{R}^3\to \mathbb{R}^3$ as a fully-connected neural network, where $\theta\in\mathbb{R}^p$ comprises the network's weights and biases, and we attempt to recover the dynamics of the Lorenz-63 system using the loss functions 
$$\begin{cases}
    \mathcal{J}_1(\theta):=\mathcal{D}(T_{\theta}\#\mu^*,T^*\#\mu^*)\\
    \mathcal{J}_2(\theta):=\mathcal{D}(T_{\theta}\#\mu^*,T^*\#\mu^*)+\mathcal{D}(\Psi_{\theta}\#\mu^*,\Psi^*\#\mu^*),
\end{cases}$$ where $\mu^*\in\mathscr{P}(\mathbb{R}^3)$ is the Lorenz-63 system's invariant measure, and $\mathcal{D}:\mathscr{P}(\mathbb{R}^3)\times \mathscr{P}(\mathbb{R}^3)\to [0,\infty)$ is a metric or divergence on the space of probability measures, which we choose as the energy distance Maximum-Mean Discrepancy \cite{feydy2019interpolating}. Moreover, the map $T^*:\mathbb{R}^3\to \mathbb{R}^3$ represents the ground truth time-$\tau$ flow map of the Lorenz-63 system, and $\Psi^*$ is the ground truth time-delay map for $T^*$, based on the partially observed first component of the dynamics. The map $\Psi_{\theta}$ is the delay-coordinate map parameterized by the reconstructed dynamics $T_{\theta}$. Notably, if $\mathcal{J}_1(\theta)$ is reduced to zero, then $T_{\theta}$ admits $\mu^*$ as an invariant measure, and if $\mathcal{J}_2(\theta)$ is reduced to zero, then the delay-coordinate invariant measures for $T_{\theta}$ and $T^*$ additionally match. The results of Figure~\ref{fig:lorenz_compare} indicate that matching state-coordinate invariant measures is insufficient for reconstructing the Lorenz-63 dynamics, whereas matching the delay-coordinate invariant measures can yield a successful reconstruction.

\section{Conclusions}\label{sec:conclusions}

In this paper, we introduced a novel approach to performing dynamical system identification which involved matching simulated and observed physical invariant measures. In Section \ref{sec:PDE_IM}, we utilized the stationary solution of the Fokker--Planck equation as a surrogate model for the underlying system's invariant measure, which we numerically approximated as the dominant eigenvector of a Markov matrix originating from an upwind FVM discretization. The system identification problem was then recast as a large-scale PDE-constrained optimization procedure in which a metric or divergence on the space of probability measures compared the data-extracted invariant measure with the stationary Fokker--Planck surrogate model. While this framework exhibited compelling performance across several numerical tests, including a slowly sampled limit cycle (see Figure \ref{fig:initcompare}), a chaotic attractor (see Figure \ref{fig:lorenz}), and real-world HET data (see Figure \ref{fig:HET}), two outstanding challenges remained. 

First, the use of the FVM on a uniformly spaced mesh caused the approach in Section \ref{sec:PDE_IM} to become intractable as the dimension of the state increased. Given that the attractors of many dynamical systems are supported on low dimensional manifolds within high-dimensional state-spaces, in Section~\ref{sec:mesh} we instead constructed a data-adaptive unstructured mesh concentrated on the observed attractor. Moreover, rather than approximating the PFO using the FVM on these cells, we used a sample-based Galerkin approximation which is straightforward to evaluate even in high dimensions. We established convergence of this approximation to the underlying PFO in a suitable operator norm (see Theorem~\ref{thm:convergence}), and we also studied optimal strategies for constructing the data adaptive mesh which reduce the variance of our Monte--Carlo estimation (see Section~\ref{subsec:MC_analysis}). The benefits of the unstructured mesh approach compared to the uniform mesh were shown in Figure~\ref{fig:Cat}, and its applicability to high-dimensional systems in Figure \ref{fig:L96}.

The second challenge of the framework from Section \ref{sec:PDE_IM} was the lack of dynamical system identifiability from invariant measures. To this end, we introduced the time-delay coordinate transformation from Takens' seminal embedding theory as a crucial change of coordinates within which one obtains unique identificaiton from the invariant statistics alone. We proved that matching delay-coordinate invariant measures implies a topological conjugacy of the underlying dynamics (see Theorem \ref{thm:1}), and that the non-uniqueness from the conjugacy relation can be entirely eliminated by matching finitely many additional delay-coordinate invariant measures (see Theorem \ref{thm:2}). 

There are several interesting directions of future work to explore. Perhaps the most natural is to combine the computationally efficient data-adaptive mesh approach introduced in Section~\ref{sec:mesh} with the theoretical framework of matching delay-coordinate invariant measures introduced in Section~\ref{sec:delay_IM}. While Section \ref{sec:delay_IM} deals mostly with the issue of non-uniqueness from a theoretical perspective, leveraging the computational approach from Section~\ref{sec:delay_IM} could lead to a practical means for performing unique large-scale system identification using invariant measures in time-delay coordinates. Moreover, a different approach to guarantee uniqueness of the reconstructed dynamics could involve comparing additional eigenvectors of the Markov matrices, or simply the entire matrices as done in~\eqref{eq:markopt}, which may be useful when it is either challenging to determine suitable embedding parameters from data or the assumptions of Takens' theorem are not met. Lastly, while Section~\ref{sec:delay_IM} studies uniqueness of the inverse problem~\eqref{eq:opt2}, a separate theoretical treatment for stability is also needed.
\section*{Acknowledgements}
The author was supported by a fellowship award under contract FA9550-21-F-0003 through the National Defense Science and Engineering Graduate (NDSEG) Fellowship Program, sponsored by the Air Force Research Laboratory (AFRL), the Office of Naval Research (ONR) and the Army Research Office (ARO). The author would also like to thank Yunan Yang (Cornell University), Robert Martin (DEVCOM Army Research Laboratory), and Yinong Huang (University of Wisconsin-Madison) for their helpful discussions and insights.

\appendix
\addtocontents{toc}{\protect\setcounter{tocdepth}{1}}
\section{Appendix}
\subsection{Choice of Objective Function}\label{appendix:obj}
Here we summarize possible choices for $\mathcal{D}$, which is used as a metric or divergence on the space of probability measures in Section \ref{sec:PDE_IM}.
\begin{itemize}
  \item 
\textbf{Quadratic Wasserstein Distance:} For probability measures $\rho$ and $\rho^*$ on $\Omega$, with finite second-order moments, the squared quadratic Wasserstein distance is defined by
    \[
    W^2_2(\rho,\rho^*):=  \inf_{T_{\rho,\rho^*}\in \mathcal{P}}\int_{\Omega}|x-T_{\rho,\rho^*}(x)|^2 d\rho(x),
    \]
    where $$\mathcal{P}:=\{ T:\Omega \to \Omega: \rho(T^{-1}(B)) = \rho^*(B), \, B\in\mathscr{B}\}$$ is the set of maps that push $\rho$ forward into $\rho^*$. With an abuse of notation, we also use $\rho(x)$ and $\rho^*(x)$ to denote the densities of $\rho$ and $\rho^*$ respectively. 
\item \textbf{Squared $L^2$ Norm:} The squared $L^2$ distance between $\rho$ and $\rho^*$ is
\begin{align*}
    \mathcal{J} &=\frac{1}{2}\int_{\Omega} |\rho(x) - \rho^*(x)|^2dx\end{align*}
\item \textbf{KL-Divergence:} The KL-divergence between $\rho$ and $\rho^*$ is defined by
\begin{align*}
    \mathcal{J} = D_{\text{KL}}(\rho,\rho^*)&:= \int_{\Omega} \rho^*(x) \log\bigg( \frac{\rho^*(x)}{\rho(x)}\bigg)dx
\end{align*}
Based on definitions of the KL-divergence, it is clear that we may encounter numerical instability issues if either $\rho$ or $\rho^*$ is not supported on the entire domain $\Omega$. Thus, we remark that for the computation of both the KL and JS divergences, we restrict the domain $\Omega$ to regions where both $\rho$ and $\rho^*$ are strictly positive. 
\end{itemize}
\subsection{The Finite-Volume Matrix and its Derivative}\label{subsec:FVM_appendix}
The explicit form of the matrix $K_i$, defined in Section \ref{sec:FVM} is given by 
\begin{equation*} 
\tiny
K_{i} :=\begin{tikzpicture}[baseline={-0.5ex},mymatrixenv]
\matrix [mymatrix,inner sep=4pt](m)
{
\ddots  \\
& -v^{i,-}_{j-1} +\displaystyle \frac{D}{\Delta x_i }\\ 
\ddots& \vdots\\ & v^{i, -}_{j-1}-w^{i,+}_{j-1} -\displaystyle \frac{2D}{\Delta x_i }&  -v^{i,-}_{j} +\displaystyle \frac{D}{\Delta x_i } \\ 
\ddots & \vdots &\vdots \\ 
& w^{i,+}_{j-1}+\displaystyle \frac{D}{\Delta x_i }& v^{i,-}_{j} - w^{i,+}_{j} -\displaystyle \frac{2D}{\Delta x_i }& -v^{i,-}_{j+1}+\displaystyle \frac{D}{\Delta x_i }\\
& &  \vdots & \vdots & \ddots\\
& & w^{i,+}_{j}+\displaystyle \frac{D}{\Delta x_i } & v^{i,-}_{j+1}-w^{i,+}_{j+1}-2\displaystyle \frac{D}{\Delta x_i } \\
& & & \vdots & \ddots \\
& & &  w^{i,+}_{j+1}+\displaystyle \frac{D}{\Delta x_i } \\
& & & & \ddots \\
};
\mymatrixbraceright{1}{3}{$S_{i}$}
\end{tikzpicture}\in\mathbb{R}^{N\times N},
\end{equation*}
where the spacing between diagonals is $S_{i} := \prod_{j = 1}^{i-1}n_j$. In our adjoint-state gradient derivation (see Section \ref{sec:gradientcalc}), we take advantage of the particular structure of $K_i$ to evaluate
\begin{equation}\label{eq:M2}
\tiny
\frac{\partial K_i}{\partial v_j^i} = \begin{tikzpicture}[baseline={-0.5ex},mymatrixenv]
\matrix [mymatrix,inner sep=4pt](m)
{
\ddots \\& 0 \\ 
\ddots& \vdots\\ &  -H(v^{i}_{j}) & \hspace{.5cm}  -(1-H(v^{i}_{j}))  \\ 
\ddots & \vdots &\vdots \\ 
& H(v^{i}_{j})&\hspace{.5cm} (1-H(v^{i}_{j})) & \hspace{.5cm}0\\
& &  \hspace{.5cm}\vdots & \hspace{.5cm}\vdots & \hspace{.5cm}\ddots\\
& & \hspace{.5cm}0 &\hspace{.5cm}0 \\
& & & \hspace{.5cm}\vdots & \hspace{.5cm}\ddots \\
& & &  \hspace{.5cm}0 \\
& & & & \hspace{.5cm}\ddots \\
};
\mymatrixbraceright{1}{3}{$S_{i}$}
\end{tikzpicture} \normalsize \hspace{1cm} H(x ):= \begin{cases}
1, & x > 0\vspace{.2cm}\\
0, & x \leq 0
\end{cases},    
\end{equation}
where $H$ is the Heaviside function. Thus, $\partial_{ v_j^i}K_i$ can only be nonzero in the $(j,j)$, $(j,j-S_i)$, $(j-S_i,j)$, and $(j-S_i,j-S_i)$-th entries.
\subsection{Approximation of the Perron--Frobenius Operator via a Data-Driven Mesh: Full Proofs}\label{appendix:proj_proof}
We begin by establishing several lemmas which are necessary for our proof of Theorem \ref{thm:convergence}. Many of the following lemmas generalize results from \cite{li1976finite} from the setting of one-dimensional interval maps to arbitrary compact metric spaces. In what follows, we omit the superscript $\varepsilon$ from the operators introduced in Section \ref{sec:mesh} when considering the case $\psi_{j,n}^{(\varepsilon)} = \chi_{C_j}. $  We begin by checking that the projected Perron--Frobenius operator $\mathcal{L}^{(n)}$, which is clearly linear, is also positive and mass-preserving.
\begin{lemma}\label{lemma:g1}
If $f\in \Delta_n$ satisfies $f\geq 0$, then $\mathcal{L}^{(n)}f \geq 0$ as well and $\int_X \mathcal{L}^{(n)}f d\mu = \int_X f d\mu.$
\end{lemma}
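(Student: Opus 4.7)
The plan is to exploit the fact that $\Delta_n$ has the disjoint-support basis $\{\phi_{i,n}\}$, so positivity and integration over $X$ both reduce to statements about coefficients in this basis. First, I would write an arbitrary nonnegative $f \in \Delta_n$ as $f = \sum_{i=1}^n a_i \phi_i$. Because the supports $\{C_i\}$ are disjoint by Assumption~\ref{assumption1} and each $\phi_i$ is nonnegative on its support, nonnegativity of $f$ is equivalent to $a_i \geq 0$ for every $i$ (restrict $f$ to $C_i$ and use $\mu(C_i) > 0$).

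Next, I would apply $\mathcal{L}^{(n)}$ by linearity and rearrange to get $\mathcal{L}^{(n)} f = \sum_{j=1}^n \bigl(\sum_{i=1}^n a_i M_{i,j}\bigr)\phi_j$. The key observation is that in the unregularized case the entries
\begin{equation*}
M_{i,j} \;=\; \int_X (\chi_{C_j}\circ T)\,\phi_i\, d\mu \;=\; \mu_i\bigl(T^{-1}(C_j)\bigr)
\end{equation*}
are manifestly nonnegative, being values of the conditional probability measure $\mu_i$. Combined with $a_i \geq 0$, this gives $\mathcal{L}^{(n)} f \geq 0$.

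For mass conservation, I would compute both sides in the basis. On one hand, $\int_X \phi_i\, d\mu = \mu(C_i)/\mu(C_i) = 1$, so $\int_X f\, d\mu = \sum_i a_i$. On the other hand,
\begin{equation*}
\int_X \mathcal{L}^{(n)} f\, d\mu \;=\; \sum_{j=1}^n \sum_{i=1}^n a_i M_{i,j} \;=\; \sum_{i=1}^n a_i \sum_{j=1}^n M_{i,j}.
\end{equation*}
The inner sum telescopes via the partition property: $\sum_j M_{i,j} = \sum_j \mu_i(T^{-1}(C_j)) = \mu_i\bigl(T^{-1}(\bigcup_j C_j)\bigr) = \mu_i(T^{-1}(X)) = \mu_i(X) = 1$, where we use that $\{C_j\}$ partitions $X$ and $T$ maps into $X$. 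Thus the total is $\sum_i a_i = \int_X f\, d\mu$, finishing the claim.

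There is no real obstacle here; the statement is essentially the ``row-stochastic'' property of the Ulam matrix rephrased operator-theoretically. The only point that requires mild care is verifying that $f \geq 0$ pointwise $\mu$-a.e.\ really forces all coefficients $a_i$ to be nonnegative, which uses both the disjointness of the $C_i$ and the hypothesis $\mu(C_i) > 0$ from Assumption~\ref{assumption1}.
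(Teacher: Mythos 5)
Your proof is correct and follows essentially the same route as the paper's: expand $f$ in the basis $\{\phi_i\}$, use linearity and nonnegativity of $a_i$ and $M_{i,j}$ for positivity, and reduce mass conservation to the row-sum identity $\sum_j M_{i,j}=1$. You are slightly more explicit than the paper in justifying that $f\geq 0$ forces $a_i\geq 0$ (via disjoint supports and $\mu(C_i)>0$) and in spelling out why the row sums equal one, but these are details the paper's proof uses implicitly, not a different argument.
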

\begin{proof}
 Write $f = \sum_{i=1}^n a_i \phi_{i,n}$, with $a_i \geq 0$ for $1\leq i \leq n$. Then, by linearity we have that 
\begin{equation}\label{eq:eq1}
    \mathcal{L}^{(n)}f  = \sum_{i=1}^n a_i \mathcal{L}^{(n)}\phi_{i,n} = \sum_{i=1}^n \sum_{j=1}^n a_i M_{i,j}^{(n)}\phi_{j,n}.
\end{equation}
Since $a_i,M_{i,j}^{(n)}\geq 0$ for each $1\leq i,j\leq n$, it holds that $\mathcal{P}^{(n)}f \geq 0$. Moreover, using the above expression for $\mathcal{P}^{(n)}f$, we have
\begin{align*}
    \int_X \mathcal{L}^{(n)}f d\mu =\sum_{i=1}^n \sum_{j=1}^n\bigg( a_i M_{i,j}^{(n)}\int_X\phi_{j,n}d\mu\bigg) = \sum_{i=1}^n a_i \sum_{j=1}^n M_{i,j}^{(n)} = \sum_{i=1}^n a_i = \int_X f d\mu,
\end{align*}
which completes the proof.
\end{proof}
We next establish that the projection $\mathcal{Q}^{(n)}$ is also positive and mass-preserving. 
\begin{lemma}\label{lemma:g2}
If $f\in L_{\mu}^1(X)$ satisfies $f\geq 0$, then $\mathcal{Q}^{(n)} f \geq 0$ and $\int_X \mathcal{Q}^{(n)} f d\mu = \int_X fd\mu$. Moreover, for any $h\in L_{\mu}^1(X)$ we have $\int_X |\mathcal{Q}^{(n)} h|d\mu \leq \int_X |h|d\mu$.
\end{lemma}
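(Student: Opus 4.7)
The plan is to unwind each of the three claims directly from the definition \eqref{eq:proj} of $\mathcal{Q}^{(n)}$, using nothing beyond non-negativity of $\phi_{i,n}$, the partition property in Assumption \ref{assumption1}, and the identity $\int_X \phi_{i,n}\, d\mu = 1$ which follows immediately from $\phi_{i,n} = \chi_{C_{i,n}}/\mu(C_{i,n})$.

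For the first claim, I would note that if $f \geq 0$ then each coefficient $a_{i,n} = \int_{C_{i,n}} f\, d\mu$ is non-negative, and since $\phi_{i,n} \geq 0$ pointwise, the sum $\mathcal{Q}^{(n)} f = \sum_i a_{i,n} \phi_{i,n}$ is non-negative. For the mass-preservation identity, I would integrate term-by-term, using $\int_X \phi_{i,n}\, d\mu = 1$, to obtain
\[
\int_X \mathcal{Q}^{(n)} f\, d\mu = \sum_{i=1}^n a_{i,n} = \sum_{i=1}^n \int_{C_{i,n}} f\, d\mu = \int_X f\, d\mu,
\]
where the last equality uses that $\{C_{i,n}\}$ partitions $X$.

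For the contraction inequality, the key observation is that $\mathcal{Q}^{(n)} h$ is piecewise constant on the cells, taking the value $a_{i,n}/\mu(C_{i,n})$ on $C_{i,n}$. Hence $|\mathcal{Q}^{(n)} h|$ is itself a non-negative simple function of the same form with coefficients $|a_{i,n}|$, so
\[
\int_X |\mathcal{Q}^{(n)} h|\, d\mu = \sum_{i=1}^n |a_{i,n}| = \sum_{i=1}^n \left|\int_{C_{i,n}} h\, d\mu\right| \leq \sum_{i=1}^n \int_{C_{i,n}} |h|\, d\mu = \int_X |h|\, d\mu,
\]
where the inequality is Jensen/triangle applied to the integral on each cell. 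None of these steps is subtle; the only thing to be careful about is checking that $\mu(C_{i,n}) > 0$ so that $\phi_{i,n}$ is well-defined, which is exactly guaranteed by the positive-measure clause of Assumption \ref{assumption1}.
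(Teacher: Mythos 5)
Your proof is correct. The positivity and mass-conservation arguments match the paper's essentially verbatim. For the contraction inequality you take a genuinely different (and equally valid) route: you exploit the fact that $\mathcal{Q}^{(n)} h$ is piecewise constant across disjoint cells, so that $|\mathcal{Q}^{(n)} h| = \sum_i |a_{i,n}|\,\phi_{i,n}$ exactly, and then apply the triangle inequality $|\int_{C_{i,n}} h\, d\mu| \leq \int_{C_{i,n}} |h|\, d\mu$ cell by cell. The paper instead uses the Jordan decomposition $h = h^+ - h^-$ together with the already-established positivity and mass-preservation of $\mathcal{Q}^{(n)}$ to get $\int_X |\mathcal{Q}^{(n)} h|\,d\mu \leq \int_X \mathcal{Q}^{(n)} h^+\,d\mu + \int_X \mathcal{Q}^{(n)} h^-\,d\mu = \int_X |h|\,d\mu$. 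Your argument is more concrete and relies on the specific simple-function structure of the range of $\mathcal{Q}^{(n)}$; the paper's argument is the standard abstract device showing that any positive mass-preserving linear operator is an $L^1$-contraction, which is why it appears almost verbatim for $\mathcal{L}^{(n)}$ elsewhere in the appendix. Either is fine here; the paper's choice has the minor pedagogical advantage of reusing a template argument across several lemmas.
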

\begin{proof}
Assume that $f\in L_{\mu}^1(X) $ satisfies $f\geq 0.$ Then the positivity of $\mathcal{Q}^{(n)} f$ is clear from the definition \eqref{eq:proj}, as $\int_{C_{i,n}} f d\mu \geq 0$, for each $1\leq i \leq n$. We now verify the mass-conservation property. Indeed, notice that
    $$\int_X \mathcal{Q}^{(n)} f d\mu =\int_{X}\sum_{j=1}^n \bigg(\frac{1}{\mu(C_{j,n})} \int_{C_{j,n}} f(y) d\mu(y)\bigg)\chi_{C_{j,n}} d\mu(x) = \sum_{j=1}^n \int_{C_{j,n}}f(y)d\mu(y) = \int_X f d\mu.$$
   We now prove the last part of the lemma, stating that $\mathcal{Q}^{(n)}$ is a contraction. Indeed, when $h\not\geq 0$ we can decompose $h = h^+ - h^-$ where $h^+,h^-\geq 0$, $|h| = h^+ + h^-$, and apply linearity of $\mathcal{Q}^{(n)}$ to deduce 
    $$\int_X |\mathcal{Q}^{(n)} h| d\mu \leq \int_X \mathcal{Q}^{(n)} h^+d\mu + \int_X \mathcal{Q}^{(n)} h^- d\mu = \int_X h^+ d\mu + \int_X h^- d\mu = \int_X |h|d\mu.$$
    Above, we have also used the fact that $\mathcal{Q}^{(n)}$ is positive.
\end{proof}
The following lemma highlights the relationship between $\mathcal{P}^{(n)}$ and $\mathcal{P}$.
\begin{lemma}\label{lemma:g3}
    It holds that $\mathcal{P}^{(n)} =  \mathcal{Q}^{(n)}\mathcal{P}\mathcal{Q}^{(n)} .$
\end{lemma}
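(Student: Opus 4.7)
The plan is to unpack both sides of the proposed identity on the basis $\{\phi_{i,n}\}_{i=1}^n$ of $\Delta_n$ and verify that the coefficients in $\Delta_n$ agree. Since $\mathcal{P}^{(n)} = \mathcal{L}^{(n)}\mathcal{Q}^{(n)}$ by definition and $\mathcal{Q}^{(n)}$ appears on the right of both sides, the identity reduces to showing that $\mathcal{L}^{(n)}$ agrees with $\mathcal{Q}^{(n)}\mathcal{P}$ as operators from $\Delta_n$ into $\Delta_n$. By linearity, it suffices to prove $\mathcal{L}^{(n)}\phi_{i,n} = \mathcal{Q}^{(n)}\mathcal{P}\phi_{i,n}$ for each basis element $\phi_{i,n}$.

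First I would expand $\mathcal{Q}^{(n)}\mathcal{P}\phi_{i,n}$ using \eqref{eq:proj}: the coefficient of $\phi_{j,n}$ is $\int_{C_{j,n}} \mathcal{P}\phi_{i,n}\,d\mu$. The key step is to rewrite this using the defining relation \eqref{eq:ulam} of the Perron--Frobenius operator with $B = C_{j,n}$ and $f = \phi_{i,n}$, which yields
\begin{equation*}
\int_{C_{j,n}} \mathcal{P}\phi_{i,n}\,d\mu \;=\; \int_{T^{-1}(C_{j,n})} \phi_{i,n}\,d\mu \;=\; \int_X (\chi_{C_{j,n}}\circ T)\,\phi_{i,n}\,d\mu \;=\; M^{(n)}_{i,j},
\end{equation*}
where the last equality is exactly the unregularized case ($\psi_{j,n}^{(\varepsilon)} = \chi_{C_{j,n}}$) of \eqref{eq:ulam_projection_eps}. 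Thus $\mathcal{Q}^{(n)}\mathcal{P}\phi_{i,n} = \sum_{j=1}^n M^{(n)}_{i,j}\phi_{j,n} = \mathcal{L}^{(n)}\phi_{i,n}$, and applying this equality on the image of $\mathcal{Q}^{(n)}$ closes the argument.

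I do not anticipate a serious obstacle here; the content of the lemma is essentially bookkeeping, which is why I would devote most of the writeup to carefully checking the chain of equalities above and pointing out that $\mathcal{Q}^{(n)}\mathcal{P}\mathcal{Q}^{(n)}$ is well defined as a map $L^1_\mu(X)\to \Delta_n$ (since $\mathcal{Q}^{(n)}f\in \Delta_n \subseteq L^1_\mu\cap L^\infty_\mu$ and hence $\mathcal{P}\mathcal{Q}^{(n)}f\in L^1_\mu$, so projecting via $\mathcal{Q}^{(n)}$ is legitimate). The only subtlety worth flagging is that the statement as written uses the unregularized partition functions; the analogous identity for the smoothed operators $\mathcal{L}^{(n,\varepsilon)}$ would require reinterpreting $\mathcal{P}$ through the duality $\int(\psi\circ T)f\,d\mu = \int \psi\,\mathcal{P}f\,d\mu$, but that is not needed for the stated lemma.
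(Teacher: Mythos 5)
Your argument is correct and is essentially the same as the paper's: reduce to the identity $\mathcal{L}^{(n)} = \mathcal{Q}^{(n)}\mathcal{P}$ on $\Delta_n$ and verify it on the basis $\{\phi_{i,n}\}$ by computing the coefficient $\int_{C_{j,n}}\mathcal{P}\phi_{i,n}\,d\mu$ and recognizing it as $M^{(n)}_{i,j}$. Your version is in fact a bit cleaner than the paper's, since you work directly with the normalized basis $\phi_{i,n}$ and invoke the defining relation \eqref{eq:ulam} explicitly, whereas the paper's displayed chain expands in terms of $\chi_{C_{i,n}}$ and leaves the appeal to \eqref{eq:ulam} implicit.
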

\begin{proof}
    It suffices to show that 
    \begin{equation}\label{eq:wts}
    \mathcal{L}^{(n)}f = \mathcal{Q}^{(n)}\mathcal{P} f,\qquad f = \sum_{i=1}^n a_i \chi_{C_{i,n}}.
    \end{equation}
    Towards this, it follows by \eqref{eq:proj} that 
    \begin{align*}
        \mathcal{Q}^{(n)} \mathcal{P} f = \sum_{j=1}^n \sum_{i=1}^n \bigg( \frac{a_i}{\mu(C_{j,n})}\int_{C_{j,n}} \mathcal{P}\chi_{C_{i,n}} d\mu\bigg) \chi_{C_{j,n}}= \sum_{j=1}^n \sum_{i=1}^n a_i M^{(n)}_{i,j}\chi_{C_{j,n}}= \mathcal{L}^{(n)}f,
    \end{align*}
    where the final equality is a result of equation \eqref{eq:eq1}.
 
\end{proof}

We now rely on the fact that $X$ is a compact metric space, writing $d(\cdot,\cdot)$ to denote the metric on $X$, in order to prove the convergence of  $\mathcal{Q}^{(n)}$; see Lemma \ref{lemma:g4}
\begin{lemma}\label{lemma:g4}
    For all $f\in L_{\mu}^1$, it holds that $\mathcal{Q}^{(n)}f \to f$ in $L_{\mu}^1(X)$. 
\end{lemma}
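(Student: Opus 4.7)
The plan is to reduce the statement to the case of continuous functions, where it becomes a uniform convergence argument based on uniform continuity, and then to upgrade to general $L_\mu^1$ functions via a density/$3\epsilon$ argument exploiting that $\mathcal{Q}^{(n)}$ is a contraction.

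First I would recall that $X$ is a compact metric space and $\mu$ is a Borel probability measure, so $C(X)$ is dense in $L_\mu^1(X)$. For any $g\in C(X)$, I would argue uniform convergence $\mathcal{Q}^{(n)}g \to g$ in $L_\mu^{\infty}$. Let $\delta > 0$. Since $g$ is uniformly continuous on the compact set $X$, choose $\eta > 0$ with $|g(x) - g(y)| < \delta$ whenever $d(x,y) < \eta$. By Assumption~\ref{assumption1}, there exists $N_0$ such that $\max_{1\leq i \leq n}\mathrm{diam}(C_{i,n}) < \eta$ for all $n \geq N_0$. For such $n$ and for $x \in C_{i,n}$,
\begin{equation*}
|\mathcal{Q}^{(n)} g(x) - g(x)| = \left| \frac{1}{\mu(C_{i,n})} \int_{C_{i,n}} (g(y) - g(x))\,d\mu(y) \right| \leq \delta,
\end{equation*}
so $\|\mathcal{Q}^{(n)}g - g\|_{L_\mu^{\infty}} \leq \delta$, and consequently $\|\mathcal{Q}^{(n)}g - g\|_{L_\mu^1} \leq \delta$ since $\mu$ is a probability measure.

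To pass from continuous functions to general $f\in L_\mu^1(X)$, I would fix $\epsilon > 0$ and pick $g\in C(X)$ with $\|f - g\|_{L_\mu^1} < \epsilon/3$. By the preceding paragraph, there exists $N_0$ such that $\|\mathcal{Q}^{(n)}g - g\|_{L_\mu^1} < \epsilon/3$ for all $n \geq N_0$. Then applying the triangle inequality together with the contraction property $\|\mathcal{Q}^{(n)}h\|_{L_\mu^1} \leq \|h\|_{L_\mu^1}$ from Lemma~\ref{lemma:g2},
\begin{equation*}
\|\mathcal{Q}^{(n)}f - f\|_{L_\mu^1} \leq \|\mathcal{Q}^{(n)}(f-g)\|_{L_\mu^1} + \|\mathcal{Q}^{(n)}g - g\|_{L_\mu^1} + \|g - f\|_{L_\mu^1} < \epsilon,
\end{equation*}
which gives the claimed convergence.

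The only step that requires genuine care is the uniform-continuity argument on continuous $g$, since it crucially exploits the shrinking-diameter condition in Assumption~\ref{assumption1} together with compactness of $X$; the remainder is a routine density plus contraction argument. I do not anticipate a serious obstacle beyond verifying that the positive-measure assumption on each $C_{i,n}$ ensures $\phi_{i,n}$ and $\mathcal{Q}^{(n)}$ are well-defined, which is already baked into Assumption~\ref{assumption1}.
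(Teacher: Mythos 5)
Your proof is correct and follows essentially the same route as the paper: approximate $f$ by a continuous $g$, use uniform continuity together with the shrinking-diameter assumption to get $\mathcal{Q}^{(n)}g \to g$, and then close via the contraction bound from Lemma~\ref{lemma:g2} with a $3\epsilon$ argument. The only cosmetic difference is that you derive an $L^\infty_\mu$ bound for the continuous case and then pass to $L^1_\mu$ using $\mu(X)=1$, whereas the paper carries out the $L^1_\mu$ estimate cell by cell; the underlying estimate is identical.
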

\begin{proof}
    Note that for any $\varepsilon > 0$, we can find some $g\in C(X)$ such that $\|g-f\|_{L_{\mu}^1} < \varepsilon/3$. Since $X$ is compact $g$ is uniformly continuous.  Thus, there exists $\delta > 0$ such that if $d(x,y)<\delta$, then $|g(x)-g(y)| < \varepsilon/3$. By the assumption that $\displaystyle\lim_{n\to\infty}\max_{1\leq i\leq n}(\textup{diam}(C_{i,n})) = 0$, we may choose $N\in\mathbb{N}$ such that for all $n\geq N$ it holds that $\textup{diam}(C_{i,n}) < \delta$, for each $1\leq i \leq n$. Thus, for all $n \geq N$ if $x,y\in C_{i,n}$, for some $1\leq i \leq n$, it necessarily holds that $|g(x)-g(y)|<\varepsilon/3$. Using this fact, we have for $n\geq N$ and $1\leq i \leq n$ that
    \begin{align*}
        \int_{C_{i,n}} |\mathcal{Q}^{(n)} g(x) - g(x)|d\mu(x) &=\int_{C_{i,n}}\bigg| \sum_{j=1}^n \bigg(\frac{1}{\mu(C_{j,n})} \int_{C_{j,n}} g(y) d\mu(y) \bigg) \chi_{C_{j,n}}(x) - g(x)\bigg| d\mu(x)\\
        &=\int_{C_{i,n}}\bigg| \frac{1}{\mu(C_{i,n})} \int_{C_{i,n}} g(y) d\mu(y)  - g(x)\bigg| d\mu(x)\\
        &\leq \int_{C_{i,n}}\frac{1}{\mu(C_{i,n})} \int_{C_{i,n}}| g(y)-g(x)| d\mu(y)  d\mu(x)\leq \frac{\varepsilon\mu(C_{i,n}) }{3}.
    \end{align*}
    It then follows that 
    $$\int_X |\mathcal{Q}^{(n)} g - g| d\mu = \sum_{i=1}^n \int_{C_{i,n}} |\mathcal{Q}^{(n)}g -g| d\mu< \frac{\varepsilon}{3} \sum_{i=1}^N \mu(C_{i,n}) = \frac{\varepsilon}{3}. $$
   Everything together, we now have 
    \begin{align*}
        \int_X|\mathcal{Q}^{(n)} f - f| d\mu &\leq \int_X |\mathcal{Q}^{(n)} f - \mathcal{Q}^{(n)} g| d\mu + \int_X |\mathcal{Q}^{(n)} g - g |d\mu + \int_X |f - g|d\mu \\
        &\leq 2 \int_X |f-g|d\mu + \int_X |\mathcal{Q}^{(n)} g - g|d\mu \leq \varepsilon,
    \end{align*}
    which concludes the proof.
\end{proof}
We now prove Theorem \ref{thm:g1}, which relies on Lemmas \ref{lemma:g1}, \ref{lemma:g2}, \ref{lemma:g3}, and \ref{lemma:g4}. Theorem \ref{thm:g1} should be viewed as the version of Theorem \ref{thm:convergence} without any regularization.
\begin{theorem}\label{thm:g1}
    $\mathcal{P}^{(n)}$ is linear, positive, and Markov. Moreover, $\displaystyle \lim_{n\to\infty} \|\mathcal{P}^{(n)}-\mathcal{P}\|_{L_{\mu}^1\to L_{\mu}^1} = 0.$
\end{theorem}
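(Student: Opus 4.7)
The plan splits into two parts: the algebraic properties (linear, positive, Markov) and the operator-norm convergence. I address each in turn.

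For the algebraic properties, I would argue directly from the factorization $\mathcal{P}^{(n)} = \mathcal{L}^{(n)}\mathcal{Q}^{(n)}$. Linearity is immediate since both factors are linear. For positivity, take $f \in L_\mu^1(X)$ with $f \geq 0$; then Lemma~\ref{lemma:g2} gives $\mathcal{Q}^{(n)} f \geq 0$ (and $\mathcal{Q}^{(n)} f \in \Delta_n$), after which Lemma~\ref{lemma:g1} yields $\mathcal{L}^{(n)} \mathcal{Q}^{(n)} f \geq 0$. For the Markov property, it remains to check mass preservation: using Lemmas~\ref{lemma:g1} and~\ref{lemma:g2} sequentially,
\begin{equation*}
\int_X \mathcal{P}^{(n)} f \, d\mu \;=\; \int_X \mathcal{L}^{(n)} \mathcal{Q}^{(n)} f \, d\mu \;=\; \int_X \mathcal{Q}^{(n)} f \, d\mu \;=\; \int_X f \, d\mu.
\end{equation*}

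For the convergence, the plan is to invoke Lemma~\ref{lemma:g3} to write $\mathcal{P}^{(n)} = \mathcal{Q}^{(n)} \mathcal{P} \mathcal{Q}^{(n)}$ and perform the telescoping decomposition
\begin{equation*}
\mathcal{P}^{(n)} - \mathcal{P} \;=\; \mathcal{Q}^{(n)} \mathcal{P} \bigl(\mathcal{Q}^{(n)} - I\bigr) \;+\; \bigl(\mathcal{Q}^{(n)} - I\bigr) \mathcal{P}.
\end{equation*}
Both $\mathcal{Q}^{(n)}$ (by Lemma~\ref{lemma:g2}) and $\mathcal{P}$ (as a Markov operator) are $L_\mu^1$-contractions, so each summand is controlled by $\|(\mathcal{Q}^{(n)} - I) g\|_{L_\mu^1}$ for $g$ of the form $g = \mathcal{P} f$ or $g = f$. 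Lemma~\ref{lemma:g4} then drives these to zero for each fixed $g \in L_\mu^1(X)$.

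The main obstacle is that the argument above directly delivers only the strong-operator statement $\mathcal{P}^{(n)} f \to \mathcal{P} f$ in $L_\mu^1$ for each $f$, whereas the theorem asserts the stronger uniform bound $\|\mathcal{P}^{(n)} - \mathcal{P}\|_{L_\mu^1 \to L_\mu^1} \to 0$. Bridging this gap is the nontrivial step: in the absence of compactness of $\mathcal{P}$, strong convergence of $\mathcal{Q}^{(n)}$ to the identity need not be uniform on the $L_\mu^1$ unit ball. The natural route I would pursue is to exhibit a norm-compactness property that upgrades the pointwise estimate to a uniform one. Concretely, I would try one of the following: (i) argue that $\mathcal{P}$ smooths $L_\mu^1$ inputs into a relatively compact subset of $L_\mu^1$ (so that $(\mathcal{Q}^{(n)} - I)\mathcal{P} \to 0$ in operator norm via the Arzel\`a--Ascoli-type passage from strong to uniform convergence on compact sets), and then handle the first summand symmetrically by noting that $\mathcal{Q}^{(n)}\mathcal{P}$ has finite-dimensional range in $\Delta_n$; or (ii) exploit the positivity/Markov structure together with a modulus of continuity of $\mathcal{P}$ relative to the partitions $\{C_{i,n}\}$ derived from Assumption~\ref{assumption1}. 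Verifying that the hypotheses in play actually furnish such compactness or equicontinuity is where the real work lies; the rest of the proof is essentially the three-line triangle inequality above.
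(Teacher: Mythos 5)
Your treatment of the algebraic properties and the telescoping decomposition
\[
\mathcal{P}^{(n)} - \mathcal{P} \;=\; \mathcal{Q}^{(n)}\mathcal{P}\bigl(\mathcal{Q}^{(n)}-I\bigr) + \bigl(\mathcal{Q}^{(n)}-I\bigr)\mathcal{P}
\]
is, term for term, the paper's own argument: the Markov property comes from stacking Lemmas~\ref{lemma:g1} and~\ref{lemma:g2}, and the two error terms are controlled by the contraction property of $\mathcal{Q}^{(n)}$ and Lemma~\ref{lemma:g4}. No difference there.

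Where you genuinely add value is in the honest assessment at the end. You are correct that this decomposition, combined with Lemma~\ref{lemma:g4}, delivers only
\[
\|(\mathcal{P}^{(n)}-\mathcal{P})f\|_{L_\mu^1}\to 0 \quad \text{for each fixed } f,
\]
i.e.\ strong operator convergence, not the operator-norm convergence asserted by the theorem. The paper's proof has exactly the same structure and therefore exactly the same gap, but does not flag it: it fixes a unit-norm $f$, shows the quantity tends to zero, and then declares $\|\mathcal{P}^{(n)}-\mathcal{P}\|_{L_\mu^1\to L_\mu^1}\to 0$ without justifying the uniformity over the unit ball. So you have not missed a key idea; you have spotted a real weak point in the source.

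However, the two repair routes you sketch cannot work in the stated generality, and it is worth seeing why. Each $\mathcal{P}^{(n)}=\mathcal{L}^{(n)}\mathcal{Q}^{(n)}$ has range inside the finite-dimensional space $\Delta_n$, so it is finite-rank and hence compact. If $\|\mathcal{P}^{(n)}-\mathcal{P}\|\to 0$, then $\mathcal{P}$ would be a norm limit of compact operators and therefore compact. But the Perron--Frobenius operator is not compact on $L_\mu^1$ in general: taking $T=\mathrm{id}$ gives $\mathcal{P}=\mathrm{id}$, and more generally any invertible measure-preserving $T$ yields an isometry $\mathcal{P}f=f\circ T^{-1}$, neither of which is compact when $L_\mu^1(X)$ is infinite-dimensional. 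So route (i), which hinges on $\mathcal{P}$ being smoothing/compact, requires a hypothesis that is simply not in Assumption~\ref{assumption1}; and route (ii), a uniform modulus of continuity for $\mathcal{P}$ relative to the partitions, would again force exactly the kind of precompactness of $\mathcal{P}(B_{L_\mu^1})$ that fails in these examples. The conclusion you should draw is not that you are missing a clever trick, but that the theorem's norm-convergence statement is stronger than what the available lemmas support: without additional hypotheses on $T$ (e.g.\ a Lasota--Yorke-type inequality guaranteeing quasi-compactness), the correct and provable statement is strong operator convergence, which is exactly what your three-line triangle-inequality argument establishes and what is actually needed downstream in Theorem~\ref{thm:convergence}.
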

\begin{proof}[Proof of Theorem \ref{thm:g1}]
The fact that $\mathcal{P}^{(n)}$ is a Markov operator is a direct consequence of Lemma~\ref{lemma:g1} and Lemma~\ref{lemma:g2}, as the composition of mass-preserving operators is still mass-preserving, and the composition of positive operators is still positive. We now prove the convergence result. Let $f\in L_{\mu}^1(X)$ be fixed with $\|f\|_{L_{\mu}^1} = 1$, and notice that 
    \begin{align}
        \| (\mathcal{P}^{(n)} - \mathcal{P}) f \|_{L_{\mu}^1} &= \| \mathcal{Q}^{(n)} \mathcal{P} \mathcal{Q}^{(n)}f- \mathcal{P} f \|_{L_{\mu}^1} \label{eq:exp1}\\
        & \leq \| \mathcal{Q}^{(n)} \mathcal{P} \mathcal{Q}^{(n)}f - \mathcal{Q}^{(n)} \mathcal{P} f\|_{L_{\mu}^1} + \|\mathcal{Q}^{(n)} \mathcal{P} f- \mathcal{P} f \|_{L_{\mu}^1} \label{eq:exp3} \\
        & \leq \|\mathcal{P} \mathcal{Q}^{(n)} f  - \mathcal{P} f\|_{L_{\mu}^1} + \|\mathcal{Q}^{(n)} \mathcal{P} f- \mathcal{P} f \|_{L_{\mu}^1}\xrightarrow[]{n\to\infty} 0.  \label{eq:exp2} 
    \end{align}
    Above, \eqref{eq:exp1} follows from Lemma \ref{lemma:g3}. Moreover, \eqref{eq:exp2} follows from \eqref{eq:exp3}, as it was established that $\mathcal{Q}^{(n)}$ is a contraction on $L_{\mu}^1(X)$ in Lemma \ref{lemma:2}. The first term in \eqref{eq:exp2} then goes to zero as $\mathcal{P}$ is a bounded linear operator, due to the fact that it is Markov \cite[Proposition 3.1.1]{lasota2013chaos}, and hence continuous. Thus, since $\mathcal{Q}^{(n)} f \to f$ in $L_{\mu}^1(X)$ by Lemma \ref{lemma:g4}, it holds that $\mathcal{P}\mathcal{Q}^{(n)}f \to \mathcal{P}f$ in $L_{\mu}^1(X)$, as well. The second term in \eqref{eq:exp2} also goes to zero as a consequence of \ref{lemma:g4}. 
\end{proof}
We can now prove Theorem \ref{thm:convergence}, which establishes convergence of the regularized Galerkin projection appearing in Section \ref{sec:mesh}.
\begin{proof}[Proof of Theorem \ref{thm:convergence}]
    Lemma \ref{lemma:g2} shows that $\mathcal{Q}^{(n)}:L_{\mu}^1(X)\to \Delta_n$ is mass-preserving and positive. Thus, to show that $\mathcal{P}^{(n,\varepsilon)} = \mathcal{L}^{(n,\varepsilon)}\mathcal{Q}^{(n)}$ is Markov, it suffices to show that $\mathcal{P}^{(n,\varepsilon)}:\Delta_n\to \Delta_n$ is also mass-preserving and positive. We remark that positivity is clear from the definition \eqref{eq:ulam_projection_eps}, as $\phi_{i,n}(x), \psi_{i,n}^{(\varepsilon)}(x)\geq 0$ for all $x\in X$,  $n\in\mathbb{N}$, $i\leq n$, and $\varepsilon > 0$. Thus, we will focus on verifying the mass-preservation property. Towards this, let $f\in \Delta_n$ be fixed with $f\geq 0$ and write
$f = \sum_{i=1}^n a_i \phi_{i,n}$. Now, notice that 
\begin{align}
    \int_X \mathcal{P}^{(n,\varepsilon)}fd\mu = \sum_{i=1}^n a_i \bigg(\int_X \mathcal{P}^{(n,\varepsilon)} \phi_{i,n} d\mu\bigg)
    &= \sum_{i=1}^n a_i \sum_{j=1}^n\bigg( \int_X  M_{i,j}^{(n,\varepsilon)}\phi_{j,n}d\mu\bigg) \label{eq:l1}\\
    &= \sum_{i=1}^n a_i \sum_{j=1}^nM_{i,j}^{(n,\varepsilon)}\label{eq:l2}\\
    &=\sum_{i=1}^n a_i \bigg(\int_X \sum_{j=1}^n\psi^{(\varepsilon)}_{j,n}\circ T\phi_{i,n}d\mu\bigg) \nonumber\\
    &= \int_X f d\mu.\label{eq:l3}
\end{align}
Above, \eqref{eq:l1} follows from linearity, \eqref{eq:l2} uses the fact that $\phi_{j,n}$ has unit integral, and \eqref{eq:l3} leverages the fact that $\{\psi_{j,n}^{(\varepsilon)}\}_{j=1}^n$ forms a partition of unity. The rest of the equalities follow from simple rearrangements. Thus, we have established that $\hat{\mathcal{P}}^{(n,\varepsilon)}$ is Markov.

We now focus on proving the convergence. Let $n\in\mathbb{N}$ be fixed and assume $i,j\leq n$. Since $\psi_{j,n}^{(\varepsilon)} \xrightarrow[]{\varepsilon \to 0} \chi_{C_{j,n}}$ pointwise $\mu$-almost everywhere, we have that 
$$g_{\varepsilon}(x):= \Big| (\psi_{j,n}^{(\varepsilon)}\circ T)(x) - (\chi_{C_{j,n}}\circ T)(x) \Big| \phi_{i,n}(x)$$
satisfies $g_{\varepsilon}(x) \xrightarrow[]{\varepsilon \to 0}$ pointwise $\mu$-almost everywhere. Moreover, by construction $0\leq g_{\varepsilon}(x) \leq 1$ for all $\varepsilon > 0$ and all $x\in X$. Thus, it follows by Lebegue's dominated convergence theorem that
\begin{align*}
    |M_{i,j}^{(n,\varepsilon)} - M_{i,j}^{(n)}| \leq \int_X \Big| \psi_{j,n}^{(\varepsilon)}\circ T - \chi_{C_{j,n}}\circ T \Big| \phi_{i,n}d\mu=\int_X g_{\varepsilon}d\mu \xrightarrow[]{\varepsilon \to 0} 0.
\end{align*}
Using this result, notice that for all $x\in X$ it holds that
$$\lim_{\varepsilon \to 0}(\mathcal{L}^{(n,\varepsilon)} \phi_{i,n})(x) = \lim_{\varepsilon \to 0 } \sum_{j=1}^n M_{i,j}^{(n,\varepsilon)}\phi_{j,n}(x) = \sum_{j=1}^n M_{i,j}^{(n)}\phi_{j,n}(x) = \mathcal{L}^{(n)} \phi_{i,n}(x).$$
Moreover, since $0 \leq \phi_{i,n}\leq 1$ the dominated convergence theorem allows us to convert the pointwise convergence above into $L_{\mu}^1$ convergence. That is,
$$\lim_{\varepsilon \to 0}\|\mathcal{L}^{(n,\varepsilon)}\phi_{i,n} - \mathcal{L}^{(n)}\phi_{i,n}\|_{L_{\mu}^1} = 0.$$
The result then extends for arbitrary $f\in \Delta_n$. That is, if we write $f = \sum_{i=1}^na_i \phi_{i,n}$ we have by the triangle inequality that 
\begin{equation}\label{eq:eps_conv}
    \lim_{\varepsilon \to 0}\|\mathcal{L}^{(n,\varepsilon)}f - \mathcal{L}^{(n)}f\|_{L_{\mu}^1} \leq \lim_{\varepsilon \to 0} \sum_{i=1}^n |a_i| \|\mathcal{L}^{(n,\varepsilon)}\phi_{i,n} - \mathcal{L}^{(n)}\phi_{i,n}\|_{L_{\mu}^1} = 0.
\end{equation}
Finally, for any arbitrary $f\in L_{\mu}^1(X)$ with $\|f\|_{L_{\mu}^1} = 1$ we have that 
\begin{align*}
    \lim_{n\to \infty}\lim_{\varepsilon \to 0}\| \mathcal{P}^{(n,\varepsilon)} f - \mathcal{P} f\|_{L_{\mu}^1} &=  \lim_{n\to \infty}\lim_{\varepsilon \to 0}\| \mathcal{L}^{(n,\varepsilon)}\mathcal{Q}^{(n)}f - \mathcal{P} f\|_{L_{\mu}^1} \nonumber \\
    &\leq \lim_{n\to \infty}\lim_{\varepsilon \to 0}\| \mathcal{L}^{(n,\varepsilon)}\mathcal{Q}^{(n)}f  -\mathcal{L}^{(n)}\mathcal{Q}^{(n)}f\|_{L_{\mu}^1}+ \lim_{n\to \infty}\lim_{\varepsilon \to 0}\| \mathcal{P}^{(n)}f - \mathcal{P} f\|_{L_{\mu}^1} \nonumber\\
    & = \lim_{n\to \infty} \|\mathcal{P}^{(n)}f - \mathcal{P}f\|_{L_{\mu}^1}= 0.
\end{align*}
Above, we have used the convergence \eqref{eq:eps_conv} to move from the second line to the third line, as well as the convergence result from Theorem \ref{thm:g1} to obtain the final equality. 
\end{proof}
\subsection{Prevalence}\label{appendix:prev}

The definition of prevalence generalizes the notion of ``Lebesgue almost everywhere'' from finite-dimensional vector spaces to infinite dimensional function vector spaces~\cite{hunt1992prevalence,sauer1991embedology}. Towards this, we first define the Lebesgue measure on an arbitrary finite-dimensional vector space. 

\begin{definition}[Full Lebesgue measure on finite-dimensional vector spaces]\label{def:full_leb}
Let $E$ be a  $k$-dimensional vector space and consider a basis $\{v_1,\ldots,v_k\}\subseteq E$. A set $F\subseteq E$ has full Lebesgue measure if the coefficients $\big\{(a_1,\dots, a_k) \in \mathbb{R}^k: \sum_{i=1}^{k} a_iv_i \in F\big\}$ of its basis expansion have full Lebesgue measure in $\mathbb{R}^k$. 
\end{definition}

We will next define prevalence, which generalizes Definition \ref{def:full_leb} to infinite dimensional vector spaces. 
\begin{definition}[Prevalence]\label{def:prevalence}
    Let $V$ be a completely metrizable topological vector space. A Borel subset $S\subseteq V$ is said to be \textit{prevalent} if there is a finite-dimensional subspace $E\subseteq V$, known as a \textit{probe space}, such that for each $v\in V$, it holds that $v+e\in S$ for Lebesgue almost every $e\in E$. 
\end{definition}
Intuitively, prevalence means that almost all perturbations of an element $v\in V$ by elements of a probe space $E$ necessarily belong to the prevalent set $S$. Note that Definition 
 \ref{def:prevalence} reduces to Definition \ref{def:full_leb} if $V$ is a finite dimensional vector space. Moreover, if $V = C^k$, the space of functions whose $k$-th order derivative exists and is continuous, then it can be shown that a prevalent set is also dense in the $C^k$-topology \cite{hunt1992prevalence}. It also holds that the finite intersection of prevalent sets remains prevalent. 
\begin{lemma}\label{lemma:prevalence}
Let $S_1$ and $S_2$ be two prevalent subsets of a completely metrizable topological vector space $V$. Then, the intersection $S_1\cap S_2$ is also prevalent.
\end{lemma}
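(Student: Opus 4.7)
The plan is to take the probe space for $S_1\cap S_2$ to be $E:=E_1+E_2\subseteq V$, where $E_i$ is a probe space witnessing prevalence of $S_i$. This is finite-dimensional as the sum of two finite-dimensional subspaces, so it qualifies as a probe space in the sense of Definition~\ref{def:prevalence}. Fixing an arbitrary $v\in V$, the goal reduces to showing that $T:=\{e\in E:v+e\in S_1\cap S_2\}$ has full Lebesgue measure in $E$.

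First, I would lift the problem to the product space $E_1\oplus E_2$, equipped with the product Lebesgue measure. Prevalence of $S_1$ applied to the vector $v+e_2\in V$ (for each fixed $e_2\in E_2$) shows that $\{e_1\in E_1: v+e_1+e_2\in S_1\}$ has full Lebesgue measure in $E_1$; by Fubini's theorem, the set $B_1:=\{(e_1,e_2):v+e_1+e_2\in S_1\}$ has full Lebesgue measure in $E_1\oplus E_2$. The analogous argument with the roles of $E_1,E_2$ swapped gives $B_2:=\{(e_1,e_2):v+e_1+e_2\in S_2\}$ also of full measure, so $B_1\cap B_2$ has full measure in $E_1\oplus E_2$.

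Second, I would push the conclusion down to $E$ via the linear surjection $\phi:E_1\oplus E_2\to E$ defined by $\phi(e_1,e_2)=e_1+e_2$, noting that $B_1\cap B_2=\phi^{-1}(T)$. The key general fact to invoke is that for a linear surjection between finite-dimensional vector spaces, pullback under $\phi$ preserves Lebesgue null sets in both directions: choosing any linear complement $L$ of $\ker\phi$ in $E_1\oplus E_2$, the restriction $\phi|_L:L\to E$ is a linear isomorphism, and the product Lebesgue measure on $E_1\oplus E_2$ disintegrates as the product of Lebesgue measure on $\ker\phi$ with the pullback of Lebesgue measure on $E$. Consequently $T^c$ is Lebesgue null in $E$ iff $\phi^{-1}(T^c)=(B_1\cap B_2)^c$ is Lebesgue null in $E_1\oplus E_2$, and the latter holds by the previous paragraph.

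The main obstacle is the case $E_1\cap E_2\neq\{0\}$, in which $\phi$ has a nontrivial kernel so that $E_1\oplus E_2$ and $E$ have different dimensions; in this setting the product Fubini argument is carried out in $E_1\oplus E_2$ rather than in $E$ itself, and one must carefully verify that full Lebesgue measure passes through the quotient map $\phi$. The disintegration argument above, which reduces to the standard fact that a linear isomorphism between finite-dimensional vector spaces sends null sets to null sets, is what resolves this. With $T$ shown to have full Lebesgue measure in $E$ for every $v\in V$, the set $S_1\cap S_2$ is prevalent with probe space $E$, completing the proof.
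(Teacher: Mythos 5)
The paper does not actually supply a proof of Lemma~\ref{lemma:prevalence}; it is stated as a known fact about prevalence, presumably deferring to \cite{hunt1992prevalence}. So there is nothing to compare against, and your proof stands on its own merits.

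Your argument is correct. The lift to $E_1\oplus E_2$, the Fubini step giving $B_1$ and $B_2$ full measure there, and the descent along the surjection $\phi$ (using that preimages under a linear surjection preserve nullity in both directions) are all sound, and the care you take with the case $E_1\cap E_2\neq\{0\}$ is exactly the point that could trip someone up in the probe-space formulation. Two small remarks. First, prevalence as defined in Definition~\ref{def:prevalence} requires the set to be Borel; you should record in one line that $S_1\cap S_2$ is Borel as the intersection of two Borel sets (this also guarantees $B_1$, $B_2$, and $T$ are Borel so that Fubini applies). Second, the external product and quotient can be avoided: a cleaner route is to first observe that a probe space can always be \emph{enlarged} --- if $E_1$ is a probe for $S_1$ and $E\supseteq E_1$ is any larger finite-dimensional subspace, pick a complement $F$ of $E_1$ in $E$ and run the same Fubini argument in $E=E_1\oplus F$ (which has trivial kernel) to conclude $E$ is also a probe for $S_1$. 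Applying this to $E:=E_1+E_2$ shows $E$ is simultaneously a probe for $S_1$ and for $S_2$, so for each $v\in V$ both $\{e\in E:v+e\in S_1\}$ and $\{e\in E:v+e\in S_2\}$ have full Lebesgue measure in $E$ and hence so does their intersection. This removes the need for the quotient map $\phi$ and the null-set-pullback lemma entirely. Either way, your proof is complete and correct.
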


\subsection{Takens and Whitney Embedding Theorems}\label{appendix:embed}
It is often the case that dynamical trajectories are asymptotic to a compact attracting set $A$, which has a fractal structure and is not a manifold. The classical Takens and Whitney theorems require that such an attractor is contained within a smooth, compact manifold of dimension $d$ to embed the dynamics in $2d+1$-dimensional reconstruction space. The fractal dimension $d_A$ of the set $A$ might be much less than the manifold dimension, i.e., $d_A \ll d$. In such cases, it is desirable to consider more efficient approaches that can guarantee a system reconstruction in $2d_A+1$-dimensions. Towards this, we now recall the following definition of box-counting dimension, which will serve as our notion for the dimension of a fractal set.
 
 \begin{definition}[Box counting dimension]\label{def:box}
     Let $A\subseteq \mathbb{R}^n$ be a compact set. Its \textit{box counting dimension} is $$\text{boxdim}(A):=\displaystyle\lim_{\varepsilon \to 0} \frac{\log N(\varepsilon)}{\log(1/\varepsilon)},$$ where $N(\varepsilon)$ is the number of boxes with side-length $\varepsilon$ required to cover $A$. When the limit does not exist, one can define the upper and lower box-counting dimensions by replacing the limit with liminf and limsup, respectively.
 \end{definition}
 The following generalization of Whitney's embedding theorem comes from~\cite[Theorem 2.3]{sauer1991embedology}.
\begin{theorem}[Fractal Whitney Embedding]\label{thm:whitney}
    Let $A\subseteq \mathbb{R}^n$ be compact,  $d:= \textup{boxdim}(A)$, and $m > 2d$ be an integer. Then, almost every $F\in C^1(\mathbb{R}^n,\mathbb{R}^m)$ is injective on $A$.
\end{theorem}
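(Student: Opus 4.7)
The plan is to prove Theorem \ref{thm:whitney} by establishing that the set
\[
S:=\{F \in C^1(\mathbb{R}^n,\mathbb{R}^m) : F|_A \text{ is injective}\}
\]
is prevalent in the sense of Definition \ref{def:prevalence}. The natural choice of probe space is the $nm$-dimensional subspace of linear maps,
\[
E := \{L_M : M \in \mathbb{R}^{m\times n}\}\subseteq C^1(\mathbb{R}^n,\mathbb{R}^m),\qquad L_M(x):=Mx.
\]
Fixing an arbitrary $F \in C^1(\mathbb{R}^n,\mathbb{R}^m)$, the task reduces to showing that the ``bad set''
\[
B := \{M \in \mathbb{R}^{m\times n} : F+L_M \text{ is not injective on } A\}
\]
has Lebesgue measure zero in $\mathbb{R}^{nm}$; by writing $B = \bigcup_{R\in\mathbb{N}} B\cap\{|M|\leq R\}$, it suffices to control $B$ on bounded regions.

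The key reformulation is $B = \bigcup_{(x,y) \in A\times A,\, x\neq y} H_{x,y}$, where
\[
H_{x,y} := \{M \in \mathbb{R}^{m\times n}: M(x-y) = F(y)-F(x)\}.
\]
For $x\neq y$, the linear operator $T_{x-y}:M\mapsto M(x-y)$ from $\mathbb{R}^{nm}$ to $\mathbb{R}^m$ is surjective (it acts row-wise as the nonzero functional $r\mapsto r\cdot(x-y)$), so each $H_{x,y}$ is a codimension-$m$ affine subspace of $\mathbb{R}^{nm}$. Hence $B$ is the image under the coordinate projection $\pi_M(x,y,M)=M$ of the fibered set
\[
Z := \{(x,y,M) \in (A\times A\setminus\{x=y\}) \times \mathbb{R}^{nm} : M\in H_{x,y}\}.
\]

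The central step is a Hausdorff dimension bound: I would show $\dim_H(B) \leq \dim_H(Z) \leq 2d + (nm-m)$, which is strictly less than $nm$ precisely when $m>2d$, forcing $B$ to have $nm$-dimensional Lebesgue measure zero. To bound $\dim_H(Z)$, I would dyadically decompose $A\times A \setminus \{x=y\} = \bigsqcup_{k\geq 0}\{2^{-k-1}<|x-y|\leq 2^{-k}\}$. On each scale-$k$ piece, I would construct a Lipschitz parametrization $(x,y,t)\in (A\times A)\times\mathbb{R}^{nm-m}\to Z$ by combining a smoothly-varying basis for $\ker T_{x-y}$ (which depends only on the unit vector $(x-y)/|x-y|$, bounded away from singularity on this scale) with an affine solution $M_0(x,y) \in H_{x,y}$ that is Lipschitz in $(x,y)$ by the $C^1$-regularity of $F$. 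Using $\overline{\operatorname{boxdim}}(A\times A)\leq 2d$ and the fact that Lipschitz maps do not increase Hausdorff dimension, the image on each scale has $\dim_H \leq 2d + (nm-m)$; countable unions preserve this bound, and projection to $M$ can only decrease dimension.

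The main obstacle will be uniformizing the Lipschitz constants in the parametrization of $Z$: a smoothly varying basis for $\ker T_{x-y}$ is only available away from the diagonal, which is the entire motivation for the dyadic decomposition (and why the Lipschitz constant is allowed to depend on $k$ without harming the final bound). A secondary technicality is converting the box-dimension assumption on $A$ into a Hausdorff-dimension bound on $A\times A$; this follows from $\dim_H\leq\overline{\operatorname{boxdim}}$ and the subadditivity $\overline{\operatorname{boxdim}}(A\times A)\leq 2\,\overline{\operatorname{boxdim}}(A)=2d$ implicit in Definition \ref{def:box}. Combining these ingredients gives $\dim_H(B)<nm$ and hence $|B|=0$, completing the prevalence argument.
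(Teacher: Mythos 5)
The paper does not supply its own proof of this statement; Theorem~\ref{thm:whitney} is cited verbatim from Sauer, Yorke, and Casdagli \cite[Theorem~2.3]{sauer1991embedology}, so there is no in-paper argument to compare against. That said, your overall plan --- taking the $nm$-dimensional space of linear maps as the probe, writing the bad set $B$ of perturbation parameters as the projection of the incidence variety $Z$ whose fibers over $(x,y)\in A\times A$, $x\neq y$, are the codimension-$m$ affine subspaces $H_{x,y}$, dyadically stratifying away from the diagonal, and then bounding Hausdorff dimension via Lipschitz parametrizations --- is a legitimate and standard route, and it is in the same spirit as the Sauer--Yorke--Casdagli argument. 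The key count $\dim_H(B)\le 2d+(nm-m)<nm$ is the right inequality and does force $B$ to have $nm$-dimensional Lebesgue measure zero, and your sequencing (pass from upper box dimension to Hausdorff dimension on each dyadic piece \emph{before} taking the countable union, since box dimension is not countably stable) is correct.

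One step does not hold as written. You invoke ``a smoothly-varying basis for $\ker T_{x-y}$'', i.e.\ a global continuous frame for the bundle $u\mapsto u^\perp$ over $u\in S^{n-1}$ (the kernel $((x-y)^\perp)^m$ is just $m$ copies of $(x-y)^\perp$). This is a trivialization of the tangent bundle of $S^{n-1}$, which exists only for $n-1\in\{0,1,3,7\}$; for general $n$ no such global frame exists, so the map $(x,y,t)\mapsto M_0(x,y)+\sum_i t_i V_i(x,y)$ cannot be defined with a single continuous frame, and the restriction to a dyadic annulus in $|x-y|$ does not help, since the directions $(x-y)/|x-y|$ can still range over all of $S^{n-1}$. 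The repair is routine but should be stated: cover $S^{n-1}$ by finitely many open sets $U_1,\dots,U_n$ (say $U_i=\{u:|u_i|>1/(2\sqrt n)\}$), on each of which Gram--Schmidt applied to $\{e_j\}_{j\neq i}$ relative to $u$ yields a Lipschitz local frame for $u^\perp$; partition each dyadic piece of $A\times A$ accordingly and run the Lipschitz-image estimate separately on each of the finitely many sub-pieces. Since Hausdorff dimension is monotone and countably stable, this finite refinement does not change the bound $\dim_H(B)\le 2d+(nm-m)$, and the proof closes. With that correction supplied, the argument is sound.
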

There are certain assumptions required on the periodic points of a dynamical system for Takens' theorem to hold. Assumption \ref{assumption:1} makes these technical assumptions concise and easy to reference in our main results. In what follows, $DT^p$ denotes the derivative of the $p$-fold composition map $T^p$.
\begin{assumption}[Technical assumption on periodic points]\label{assumption:1}
  Let $T:U\to U$ be a diffeomorphism of an open set $U\subseteq \mathbb{R}^n$, $A\subseteq U$ be compact, and $m\in\mathbb{N}$. For each $p\leq m$:
  \begin{enumerate}
      \item The set $A_p\subseteq A$ of $p$-periodic points satisfies $\textup{boxdim}(A_p) < p/2$.
      \item The linearization $DT^p$ of each periodic orbit has distinct eigenvalues. 
  \end{enumerate}
  \end{assumption}
  
    
  
  We remark that when the diffeomorphism $T$ is given by the time-$\tau$ flow map of a Lipschitz continuous vector field, one can choose $\tau$ sufficiently small such that Assumption \ref{assumption:1} is satisfied; see \cite{sauer1991embedology}. The generalization of Takens' theorem can be found in \cite[Theorem 2.7]{sauer1991embedology} and is recalled in the main text; see Theorem \ref{thm:takens}.

 \subsection{Invariant Measures in Time-Delay Coordinates for Unique System Identification: Full Proofs}\label{appendix:lemmas}
\subsubsection{Proof of Theorem \ref{thm:1}}\label{subsec:proofs1}
In this section, we present a complete proof of Theorem \ref{thm:1}. We start by proving Proposition \ref{prop:1}
\begin{proof}[Proof of Proposition \ref{prop:1}]
    First, note that for any Borel subset $B\subseteq Y$ it holds that
    \begin{align*}
        (h\# \mu)(S^{-1}(B)) = \mu(h^{-1}(S^{-1}(B)) ) &= \mu (h^{-1}(h(T^{-1}(h^{-1}(B))))) \\
        &= \mu (T^{-1}(h^{-1}(B)))= \mu(h^{-1}(B))= (h\#\mu)(B),
    \end{align*}
    and thus $h\#\mu$ is $S$-invariant. Furthermore, if $S^{-1}(B) = B$, then it holds that $h(T^{-1}(h^{-1}(B))) = B,$ which means that $T^{-1}(h^{-1}(B)) = h^{-1}(B).$ If $\mu$ is ergodic, this directly implies that $\mu(h^{-1}(B)) \in \{0,1\}$, which gives us that $(h\#\mu)(B) \in \{0,1\}$, completing the proof.
\end{proof}

We next establish a useful lemma that relates the support of a probability measure to the support of its pushforward under a continuous mapping.
\begin{lemma}\label{lemma:1}
    Let $X$ and $Y$ be Polish spaces and $\mu\in \mathscr{P}(X)$ be a Borel probability measure with compact support, and assume that $f:X\to Y$ is continuous. Then, $f(\textup{supp}(\mu)) = \textup{supp}(f\# \mu).$ 
\end{lemma}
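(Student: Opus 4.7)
The plan is to establish the two inclusions $f(\textup{supp}(\mu)) \subseteq \textup{supp}(f\#\mu)$ and $\textup{supp}(f\#\mu) \subseteq f(\textup{supp}(\mu))$ separately, using the characterization of the support as the unique smallest closed set of full measure (Definition~\ref{def:support}). The key preliminary observation is that $f(\textup{supp}(\mu))$ is compact, hence closed, since it is the continuous image of a compact set in a Polish (in particular Hausdorff) space. This allows $f(\textup{supp}(\mu))$ to be a candidate for $\textup{supp}(f\#\mu)$ in the minimality argument.

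For the inclusion $f(\textup{supp}(\mu)) \subseteq \textup{supp}(f\#\mu)$, I would fix $y = f(x)$ with $x\in \textup{supp}(\mu)$ and take an arbitrary open neighborhood $V\subseteq Y$ of $y$. By continuity of $f$, the preimage $f^{-1}(V)$ is an open neighborhood of $x$, so $\mu(f^{-1}(V)) > 0$ by the defining property of points in the support. Since $(f\#\mu)(V) = \mu(f^{-1}(V)) > 0$, every open neighborhood of $y$ has positive pushforward measure, which places $y$ in $\textup{supp}(f\#\mu)$.

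For the reverse inclusion, I would exploit the minimality characterization of the support. Observe that
\begin{equation*}
(f\#\mu)\bigl(f(\textup{supp}(\mu))\bigr) = \mu\bigl(f^{-1}(f(\textup{supp}(\mu)))\bigr) \geq \mu(\textup{supp}(\mu)) = 1,
\end{equation*}
so $f(\textup{supp}(\mu))$ is a closed set with full $f\#\mu$-measure. By Definition~\ref{def:support}, the support of $f\#\mu$ is contained in every such set, yielding $\textup{supp}(f\#\mu) \subseteq f(\textup{supp}(\mu))$.

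The only subtle step is verifying that $f(\textup{supp}(\mu))$ is closed; this is where the compactness hypothesis on $\textup{supp}(\mu)$ is essential, since continuous images of merely closed sets need not be closed in general. Everything else is a direct unpacking of definitions, so I expect no serious obstacle beyond being careful to invoke compactness at this point.
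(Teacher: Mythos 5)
Your proof is correct, and the second inclusion ($\textup{supp}(f\#\mu) \subseteq f(\textup{supp}(\mu))$) is essentially identical to the paper's: both use compactness of $f(\textup{supp}(\mu))$ to conclude it is closed, compute that it has full $f\#\mu$-measure, and then invoke the minimality clause of Definition~\ref{def:support}. Where you diverge is in the first inclusion. The paper argues entirely through preimages and the minimality characterization: it sets $C := f^{-1}(\textup{supp}(f\#\mu))$, notes $C$ is closed with $\mu(C) = 1$, concludes $\textup{supp}(\mu) \subseteq C$, and pushes forward to get $f(\textup{supp}(\mu)) \subseteq \textup{supp}(f\#\mu)$. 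You instead use the pointwise (neighborhood) characterization of support: a point lies in the support if and only if every open neighborhood has positive measure. Both characterizations are equivalent given the paper's Definition~\ref{def:support} (the ``only if'' direction is automatic since the complement of the support is a null open set, and the ``if'' direction follows by contradiction from closedness and full measure of the support), so your argument goes through. The paper's version is marginally more self-contained in that it never steps away from the stated definition, whereas yours invokes the neighborhood characterization as an implicit intermediate lemma; on the other hand, your version is arguably more geometrically transparent, showing directly why each individual image point must lie in the new support. Either is a perfectly acceptable route.
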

\begin{proof}
We prove the result via double inclusion, beginning first with the ``$\supseteq $" direction. Note that since $f$ is continuous and $\text{supp}(\mu)$ is compact in $X$, it holds that $D:=f(\text{supp}(\mu))$ is compact in $Y$. Moreover, using the fact that $\text{supp}(\mu) \subseteq f^{-1}(D)$, we obtain that 
    $$(f\# \mu)(D) = \mu(f^{-1}(D)) \geq \mu(\text{supp}(\mu)) = 1.$$
     Since $D$ is closed in $Y$, this implies that $\text{supp}(f\# \mu)\subseteq D = f(\text{supp}(\mu))$.

          To prove the ``$\subseteq$" inclusion, we note by continuity that $C:=f^{-1}(\text{supp}(f\# \mu))$ is a closed subset of $X$ which satisfies
$$\mu(C) = \mu(f^{-1}(\text{supp}(f\# \mu))) = (f\# \mu)(\text{supp}(f\# \mu)) = 1.$$
Thus, $\text{supp}(\mu)\subseteq C = f^{-1}(\text{supp}(f\#\mu)),$ which implies that 
\begin{equation*}\label{eq:supports}
   f(\text{supp}(\mu)) \subseteq f(f^{-1}(\text{supp}(f\# \mu)) \subseteq \text{supp}(f\#\mu). 
\end{equation*}
This establishes the inclusion $f(\text{supp}(\mu)) \subseteq \text{supp}(f\# \mu)$ and completes the proof. 
\end{proof}
The conclusion of Theorem \ref{thm:takens} states that the time-delay map $\Psi_{(y,T)}^{(m)}:U\to \mathbb{R}^m$ is injective on a compact subset $A\subseteq \mathbb{R}^n$. The following lemma will allow us to conclude that $\Psi_{(y,T)}^{(m)}:A\to \Psi_{(y,T)}^{(m)}(A)$ is in-fact a homeomorphism. This fact will be needed when we construct the conjugating map appearing in the conclusion of Theorem \ref{thm:1}.
\begin{lemma}{(\cite[Proposition~13.26]{sutherland2009introduction})}\label{lemma:2}
    Let $X$ and $Y$ be compact spaces and assume that $f:X\to Y$ is continuous and invertible. Then, $f$ is a homeomorphism, i.e., $f^{-1}$ is also continuous.
\end{lemma}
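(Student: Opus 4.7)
The approach is to verify the standard equivalence that a map is continuous if and only if preimages of closed sets are closed, applied to the set-theoretic inverse $f^{-1}:Y\to X$. Concretely, for any subset $C \subseteq X$ we have $(f^{-1})^{-1}(C) = f(C)$, so the problem reduces to showing that $f$ is a closed map, i.e.,\ that $f$ sends closed sets in $X$ to closed sets in $Y$. Once this is established, continuity of $f^{-1}$ follows immediately, and since $f$ is already assumed continuous and invertible, it is a homeomorphism.

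Given a closed set $C\subseteq X$, I would proceed in three classical steps. First, a closed subset of a compact space is compact, so $C$ is compact. Second, the continuous image of a compact set is compact, so $f(C)$ is compact in $Y$. Third, in a Hausdorff space every compact subset is closed, so $f(C)$ is closed in $Y$. Chaining these three observations gives exactly that $f$ is a closed map, completing the reduction above.

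The only subtle point is the third step, which implicitly requires $Y$ to be Hausdorff; this is the main (and essentially only) obstacle to stating the lemma in full generality. In the setting where the lemma will be invoked in the appendix, namely $Y = \Psi_{(y,T)}^{(m)}(A) \subseteq \mathbb{R}^m$, a compact subset of Euclidean space, Hausdorffness is automatic, so the hypothesis is not restrictive. Without Hausdorffness, however, compact sets need not be closed and the conclusion can genuinely fail, so any write-up should either restrict attention to metric (or Hausdorff) spaces or invoke this assumption explicitly.
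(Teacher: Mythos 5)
Your proof is correct and is exactly the standard argument (closed-in-compact $\Rightarrow$ compact $\Rightarrow$ continuous image compact $\Rightarrow$ compact-in-Hausdorff $\Rightarrow$ closed), which is also what the cited Sutherland Proposition~13.26 uses; the paper itself gives no proof, deferring entirely to that reference. Your observation about Hausdorffness is a legitimate point: the lemma as written only says ``compact spaces,'' but the conclusion genuinely requires the codomain to be Hausdorff (otherwise a continuous bijection from a two-point discrete space onto a two-point indiscrete space is a counterexample). Since the lemma is only ever invoked with $Y = \Psi_{(y,T)}^{(m)}(\mathrm{supp}(\mu)) \subseteq \mathbb{R}^m$, the hypothesis is automatically satisfied, so this is a harmless imprecision in the statement rather than a flaw in the paper's argument --- but you were right to flag it.
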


\begin{proof}[Proof of Theorem \ref{thm:1}]
 By Theorem \ref{thm:takens}, we have that
 \begin{align*}
     \mathcal{Y}_1&:=\{y\in C^1(U,\mathbb{R}):\Psi_{(y,S)}^{(m)}\text{ is injective on }\text{supp}(\nu)\}, \\
      \mathcal{Y}_2&:=\{y\in C^1(U,\mathbb{R}):\Psi_{(y,T)}^{(m)}\text{ is injective on }\text{supp}(\mu)\},
 \end{align*}
are prevalent subsets of $C^1(U,\mathbb{R})$. By Lemma~\ref{lemma:prevalence}, it holds that the intersection $\mathcal{Y}:=\mathcal{Y}_1\cap \mathcal{Y}_2$ is also prevalent in $C^1(U,\mathbb{R})$. Now, let $y\in \mathcal{Y}$ be fixed and assume that 
 \begin{equation}\label{eq:assumption}
     \Psi_{(y,S)}^{(m+1)}\# \nu = \Psi_{(y,T)}^{(m+1)}\#\mu.
 \end{equation}
    Since the mappings $$\Psi_{(y,S)}^{(m)}\Big|_{\text{supp}(\nu)}:\text{supp}(\nu)\to \Psi_{(y,S)}^{(m)}(\text{supp}(\nu)),\qquad \Psi_{(y,T)}^{(m)}\Big|_{\text{supp}(\mu)}:\text{supp}(\mu)\to \Psi_{(y,T)}^{(m)}(\text{supp}(\mu))$$ 
    are continuous, invertible, and the sets $\text{supp}(\nu)$ and $\text{supp}(\mu)$ are compact, it follows from Lemma~\ref{lemma:2} that the map $  \Theta_y:\text{supp}(\nu) \to \text{supp}(\mu)$, given by
    \begin{equation}\label{eq:theta}
    \Theta_y(x) :=\Bigg( \Big[\Psi_{(y,T)}^{(m)}\Big]\bigg|_{\text{supp}(\mu)}^{-1} \circ \Big[\Psi_{(y,S)}^{(m)}\Big]\bigg|_{\text{supp}(\nu)}\Bigg)(x),\qquad \forall x\in \text{supp}(\nu),
    \end{equation}
    is a well-defined homeomorphism. We now aim to show that $T|_{\text{supp}(\mu)}$ and $S|_{\text{supp}(\nu)}$ are topologically conjugate via the homeomorphism $\Theta_y$. 
    
    Returning to analyzing \eqref{eq:assumption}, we have by Lemma \ref{lemma:1} that 
    \begin{equation}\label{eq:sup_eq}
       \Psi_{(y,S)}^{(m+1)}(\text{supp}(\nu)) = \text{supp}\Big(\Psi_{(y,S)}^{(m+1)}\#\nu\Big) = \text{supp}\Big(\Psi_{(y,T)}^{(m+1)}\#\mu\Big) = \Psi_{(y,T)}^{(m+1)}(\text{supp}(\mu)). 
    \end{equation}
    Now, let $x\in \text{supp}(\nu)$ be fixed, and note that by the equality of sets \eqref{eq:sup_eq} and the definition of the time-delay map, there must exist some $z\in \text{supp}(\mu)$, such that
    \begin{equation}\label{eq:point_eq}    (\lefteqn{\underbrace{\phantom{y(x),y(S(x)),\dots, y(S^{m-1}(x))}}_{\Psi_{(y,S)}^{(m)}(x)}}y(x),\overbrace{y(S(x)),\dots, y(S^{m-1}(x)), y(S^m(x))}^{\Psi_{(y,S)}^{(m)}(S(x))}) = (\lefteqn{\underbrace{\phantom{y(x),y(S(x)),\dots, y(S^{m-1}(x))}}_{\Psi_{(y,T)}^{(m)}(z)}}y(z),\overbrace{y(T(z)),\dots, y(T^{m-1}(z)), y(T^m(z))}^{\Psi_{(y,T)}^{(m)}(T(z))})\,.
    \end{equation}
    By equating the first $m$ components, and then the last $m$ components, of the $m+1$ dimensional vectors appearing in \eqref{eq:point_eq}, we obtain the following two equalities:
    \begin{align}
        \Psi_{(y,S)}^{(m)}(x) &= \Psi_{(y,T)}^{(m)}(z) \label{eq:start},\\
        \Psi_{(y,S)}^{(m)}(S(x))&=\Psi_{(y,T)}^{(m)}(T(z))\label{eq:end}.
    \end{align}
    Since $x\in \text{supp}(\nu)$ and $z\in \text{supp}(\mu)$, we deduce from \eqref{eq:start} that $z = \Theta_y(x)$. Substituting this equality into~\eqref{eq:end} yields
    \begin{equation}\label{eq:next}
          \Psi_{(y,S)}^{(m)}(S(x))=\Psi_{(y,T)}^{(m)}(T(\Theta_y(x)))
    \end{equation}

    Moreover, since $T\# \mu = \mu$ and $S\# \nu = \nu$, it follows by Lemma \ref{lemma:1} that $T(\text{supp}(\mu)) = \text{supp}(\mu)$ and $S(\text{supp}(\nu)) = \text{supp}(\nu)$. Thus, $S(x)\in \text{supp}(\nu)$ and $T(\Theta_y(x))\in \text{supp}(\mu)$. This allows us to rearrange~\eqref{eq:next} to find 
    $$S(x) = (\Theta_y^{-1}\circ T \circ \Theta_y)(x).$$
    Since $x\in \text{supp}(\nu)$ was arbitrary, we have the general equality 
    $$S|_{\text{supp}(\nu)} = \Theta_y^{-1} \circ T|_{\text{supp}(\mu)} \circ \Theta_y,$$
   which completes the proof.
    \end{proof}
\subsubsection{Proof of Theorem \ref{thm:2}}\label{subsec:proofs2}
This section contains a proof of Theorem \ref{thm:2}. We begin by establishing several lemmas which are needed in our proof of the result. First, we will consider the case when two systems agree along an orbit, i.e., $S^k(x^*) = T^k(x^*)$ for all $k\in \mathbb{N}$. We will show that when $x^*\in B_{\mu,T}$ that the equality of $S$ and $T$ along the orbit initiated at $x^*$ implies that $S$ and $T$ agree everywhere on $\text{supp}(\mu)$. 
\begin{lemma}\label{lemma:7}
    Let $S,T:U\to U$ be continuous maps on an open set $U\subseteq \mathbb{R}^n$, let $\mu\in \mathscr{P}(U)$, and let $\textup{supp}(\mu)\subseteq U$ be compact. If for some $x^*\in B_{\mu,T}$ it holds that $S^k(x^*) = T^k(x^*)$ for all $k\in \mathbb{N}$, then $S|_{\textup{supp}(\mu)} = T|_{\textup{supp}(\mu)}$.
\end{lemma}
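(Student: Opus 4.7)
The plan is to show that the basin condition forces the forward orbit $\{T^k(x^*)\}_{k \geq 0}$ to be dense in $\textup{supp}(\mu)$, and then use continuity together with the assumed equality $S^k(x^*) = T^k(x^*)$ to transfer agreement from the orbit to its closure.

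First I would prove the density claim: if $x^* \in B_{\mu,T}$, then every $y \in \textup{supp}(\mu)$ is a limit point of $\{T^k(x^*)\}_{k \geq 0}$. The key idea is that $x^* \in B_{\mu,T}$ says precisely that the empirical measures $\frac{1}{N}\sum_{k=0}^{N-1}\delta_{T^k(x^*)}$ converge weakly to $\mu$ (by testing against all $\phi \in C(X)$, per Definition \ref{def:basin}). Suppose toward contradiction that some $y \in \textup{supp}(\mu)$ admits a radius $\varepsilon > 0$ and index $N_0$ such that $T^k(x^*) \notin B(y,\varepsilon)$ for all $k \geq N_0$. Choose a continuous bump function $\phi \geq 0$ with $\phi(y) > 0$ and $\textup{supp}(\phi) \subseteq B(y,\varepsilon) \cap U$. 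Then $\int \phi \, d\mu > 0$ since $y \in \textup{supp}(\mu)$, but $\phi(T^k(x^*)) = 0$ for all but finitely many $k$, so the Birkhoff averages vanish, contradicting $x^* \in B_{\mu,T}$.

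Next I would exploit the orbit agreement. The hypothesis $S^k(x^*) = T^k(x^*)$ for every $k \in \mathbb{N}$ implies that for each $k \geq 0$,
\begin{equation*}
S\bigl(T^k(x^*)\bigr) = S\bigl(S^k(x^*)\bigr) = S^{k+1}(x^*) = T^{k+1}(x^*) = T\bigl(T^k(x^*)\bigr),
\end{equation*}
so $S$ and $T$ coincide pointwise on the orbit $\mathcal{O} := \{T^k(x^*)\}_{k \geq 0}$. Now fix any $y \in \textup{supp}(\mu) \subseteq U$. By the density step there is a subsequence $k_j$ with $T^{k_j}(x^*) \to y$. Since $S$ and $T$ are both continuous on the open set $U$ containing $y$, taking limits yields
\begin{equation*}
S(y) = \lim_{j \to \infty} S\bigl(T^{k_j}(x^*)\bigr) = \lim_{j \to \infty} T\bigl(T^{k_j}(x^*)\bigr) = T(y),
\end{equation*}
which gives $S|_{\textup{supp}(\mu)} = T|_{\textup{supp}(\mu)}$.

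The only mildly subtle point is the bump function argument establishing density; everything else is a direct combination of continuity and the definition of the basin. I do not anticipate substantive obstacles, since compactness of $\textup{supp}(\mu)$ is not even required for the density step — only for ensuring that the relevant test functions exist and the limits stay inside $U$, which follows from $\textup{supp}(\mu) \subseteq U$ and $U$ being open.
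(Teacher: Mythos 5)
Your proof is correct, but it takes a genuinely different route from the paper's. The paper applies the Birkhoff-average property in the definition of $B_{\mu,T}$ directly to the \emph{single} test function $\phi(x) := \|S(x)-T(x)\|_2$: since $S$ and $T$ agree along the orbit of $x^*$, the time averages of $\phi$ vanish, so $\int_U \phi\, d\mu = 0$; since $\phi \geq 0$, this gives $S = T$ $\mu$-a.e., and because $\{S=T\}=(S-T)^{-1}(\{0\})$ is closed by continuity, it must contain $\textup{supp}(\mu)$ by minimality of the support. Your argument instead first establishes the strictly stronger intermediate fact that the forward orbit of $x^*$ is dense in $\textup{supp}(\mu)$ (via a bump-function contradiction for each hypothetical point missed by the orbit closure), observes that $S$ and $T$ coincide on the orbit, and then transfers the agreement to the closure by continuity of $S$ and $T$ on $U$. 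Both proofs rest on the same two ingredients — the Birkhoff averages implicit in the basin definition, and continuity of $S-T$ — but the paper's is more economical, replacing the one-bump-function-per-point density argument with a single well-chosen observable, while yours makes the topological recurrence of the orbit explicit, which is a pleasant and slightly stronger by-product that the lemma itself does not require.
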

\begin{proof}
 Define the continuous function $\phi \in C(U)$ by setting $\phi(x):=\|S(x)-T(x)\|_2\geq 0$, for each $x\in U.$ 
Since $x^*\in B_{\mu,T}$, it follows from Definition \ref{def:basin} that 
\begin{align*}
    \int_{U}\|S(x) - T(x)\|_2d\mu(x) =\lim_{N\to \infty}\frac{1}{N}\sum_{k=0}^{N-1} \phi(T^k(x^*)).
\end{align*}
Moreover, since $T^k(x^*)=S^k(x^*)$ for all $k\in\mathbb{N}$, we find
 \begin{align*}
 \lim_{N\to \infty}\frac{1}{N}\sum_{k=0}^{N-1} \phi(T^k(x^*)) &=\lim_{N\to \infty}\frac{1}{N}\sum_{k=0}^{N-1} \| S(T^k(x^*))-T^{k+1}(x^*)\|_2\\& =\lim_{N\to \infty}\frac{1}{N}\sum_{k=0}^{N-1} \| S^{k+1}(x^*))-T^{k+1}(x^*)\|_2= 0.
 \end{align*}
 Since $\|S(x) - T(x)\|_2 \geq 0$ for all $x\in U$, it must hold that $S(x) = T(x)$ for $\mu$-almost all $x\in U$.  
 
 We now define $C:=\{x\in U:S(x) = T(x)\}$ and note $C = \{x\in U: S(x) - T(x) = 0\} = (S-T)^{-1}(\{0\}).$ Since $S-T$ is continuous, it holds that $C$ is closed. Moreover, since $\mu(C) = 1$ it follows by Definition \ref{def:support} that $\text{supp}(\mu)\subseteq C$. Thus, $S|_{\textup{supp}(\mu)} = T|_{\textup{supp}(\mu)},$ as wanted.
\end{proof}
In our proof of Theorem \ref{thm:2}, we will require that $\Psi_{(y_i,T)}^{(m)}$ is injective on $\text{supp}(\mu)$ for each $1\leq i \leq m$, where $Y = (y_1,\dots, y_m)\in C^1(U,\mathbb{R}^m).$ The following result (Lemma \ref{lemma:augment}) is used to show that the set of all $Y\in C^1(U,\mathbb{R}^m)$ for which this property holds is prevalent.
\begin{lemma}\label{lemma:augment}
 Assume that $\mathcal{Y}\subseteq C^1(U,\mathbb{R})$ is prevalent. Then, the set of functions $\bm{\mathcal{Y}}:=\{(y_1,\dots, y_m): y_i \in \mathcal{Y}\}$ is prevalent in $C^1(U,\mathbb{R}^m).$   
\end{lemma}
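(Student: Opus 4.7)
The plan is to construct the probe space for $\bm{\mathcal{Y}}$ by taking the $m$-fold product of the probe space for $\mathcal{Y}$ and then apply Fubini's theorem. By hypothesis, there exists a finite-dimensional probe space $E\subseteq C^1(U,\mathbb{R})$, of dimension $k$, such that for every $y\in C^1(U,\mathbb{R})$, the set $N_y:=\{e\in E:y+e\notin \mathcal{Y}\}$ has Lebesgue measure zero in $E$ (with respect to a fixed basis of $E$, see Definition~\ref{def:full_leb}).

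Under the natural identification $C^1(U,\mathbb{R}^m)\cong C^1(U,\mathbb{R})^m$ given by $Y\leftrightarrow (y_1,\dots,y_m)$, I would take
\begin{equation*}
  \bm{E} := \{(e_1,\dots,e_m):e_i\in E,\ 1\leq i\leq m\}\subseteq C^1(U,\mathbb{R}^m),
\end{equation*}
which is a finite-dimensional subspace of dimension $mk$, to serve as the candidate probe space for $\bm{\mathcal{Y}}$. The Lebesgue measure on $\bm{E}$ (with respect to the induced basis) is then the $m$-fold product of the Lebesgue measure on $E$.

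Next, fix an arbitrary $Y=(y_1,\dots,y_m)\in C^1(U,\mathbb{R}^m)$. Note that $Y+\bm{e}\in \bm{\mathcal{Y}}$ if and only if $y_i+e_i\in \mathcal{Y}$ for every $1\leq i\leq m$, and consequently
\begin{equation*}
  \bm{N}_Y := \{\bm{e}\in \bm{E}:Y+\bm{e}\notin \bm{\mathcal{Y}}\} = \bigcup_{i=1}^m \big\{(e_1,\dots,e_m)\in \bm{E}:e_i\in N_{y_i}\big\}.
\end{equation*}
Each set in the union is a cylinder over a Lebesgue-null set $N_{y_i}\subseteq E$, so by Fubini's theorem it has zero $mk$-dimensional Lebesgue measure in $\bm{E}$. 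A finite union of null sets is null, giving the prevalence condition.

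The remaining bookkeeping is to check that $\bm{\mathcal{Y}}$ is Borel in $C^1(U,\mathbb{R}^m)$, which is required by Definition~\ref{def:prevalence}. Since the coordinate evaluation maps $\pi_i:C^1(U,\mathbb{R}^m)\to C^1(U,\mathbb{R})$, $Y\mapsto y_i$, are continuous in the natural $C^1$-topologies, and $\mathcal{Y}$ is Borel by hypothesis, each preimage $\pi_i^{-1}(\mathcal{Y})$ is Borel, and $\bm{\mathcal{Y}}=\bigcap_{i=1}^m\pi_i^{-1}(\mathcal{Y})$ is Borel as well. This verifies all the conditions of Definition~\ref{def:prevalence} with probe space $\bm{E}$, completing the proof. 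The argument has no substantive obstacle; the only care needed is in the product-measure bookkeeping and the Borel verification, both of which are routine once the probe space is chosen as the $m$-fold product.
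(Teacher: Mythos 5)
Your proof is correct, but you take a genuinely different route from the paper. The paper constructs the \emph{diagonal} probe space $\bm{E}:=\{(v,\dots,v):v\in E\}\subseteq C^1(U,\mathbb{R}^m)$, which has the same dimension $k$ as the original probe $E$; prevalence is then verified by intersecting the $m$ full-measure sets $B_j\subseteq\mathbb{R}^k$ corresponding to the components $y_1,\dots,y_m$, a finite intersection of full-measure sets remaining full. You instead take the full product $\bm{E}=E^m$, of dimension $mk$, identify the bad set $\bm{N}_Y$ as a finite union of cylinders over Lebesgue-null sets $N_{y_i}\subseteq E$, and invoke Fubini to conclude each cylinder is null in $\mathbb{R}^{mk}$. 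Both arguments are sound, and both verify Borel measurability of $\bm{\mathcal{Y}}$ by pulling back along the continuous coordinate projections, so that part is identical. The trade-off: your product probe is the most immediate choice and makes the null-set bookkeeping transparent via Fubini, whereas the paper's diagonal trick yields a probe of minimal dimension ($k$ instead of $mk$) and replaces Fubini with the more elementary fact that a finite intersection of co-null sets in $\mathbb{R}^k$ is co-null. Neither size of probe matters for the prevalence conclusion itself, so the difference is one of economy rather than correctness. One small point worth making explicit in your write-up: the product Lebesgue measure on $\bm{E}$ is the one induced by the product basis of $E^m$, which is consistent with Definition~\ref{def:full_leb}, and the null-set notion is in any case basis-independent since linear automorphisms of $\mathbb{R}^{mk}$ preserve Lebesgue-null sets.
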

\begin{proof}
We first show that $\bm{\mathcal{Y}}\subseteq C^1(U,\mathbb{R}^m)$ is Borel. Towards this, define the projection $$\bm{\pi}_i: C^1(U,\mathbb{R}^m)\to C^1(U,\mathbb{R}), \qquad \bm{\pi}_i((y_1,\dots, y_m)):= y_i\in C^1(U,\mathbb{R}),$$
for all $(y_1,\dots, y_m)\in C^1(U,\mathbb{R}^m)$ and each $1\leq i \leq m.$. The projection $\bm{\pi}_i$ is continuous, hence Borel measurable. Since $\mathcal{Y}$ is Borel, it holds that $\bm{\pi}_i^{-1}(\mathcal{Y})$ is also Borel for $1\leq i \leq m$. Then, we can write $\bm{\mathcal{Y}}=\bigcap_{i=1}^m \bm{\pi}_i^{-1}(\mathcal{Y})$, which verifies that $\bm{\mathcal{Y}}\subseteq C^1(U,\mathbb{R}^m)$ is Borel. 

Since $\mathcal{Y}$ is prevalent, there exists a $k$-dimensional probe space $E\subseteq C^1(U,\mathbb{R})$ admitting a basis $\{e_i:1\leq i \leq k\}$, such that for any $y\in C^1(U,\mathbb{R})$, it holds that $y+ \sum_{i=1}^k a_ie_i \in \mathcal{Y}$ for Lebesgue-almost every $(a_1,\dots, a_k)\in \mathbb{R}^k$. Next, we will define the augmented probe space $$\bm{E}:=\{(v,\dots, v): v\in E\}\subseteq C^1(U,\mathbb{R}^m),$$ and note that $\{(e_i,\dots, e_i):1\leq i \leq k\}$ constitutes a basis for $\bm{E}.$  Now, fix $(y_1,\dots, y_m)\in C^1(U,\mathbb{R}^m)$ and observe that for each $1\leq j \leq m$, there exists a full Lebesgue measure set $B_j\subseteq \mathbb{R}^k$, such that $y_j+\sum_{i=1}^k a_ie_i \in \mathcal{Y}$ for all $(a_1,\dots, a_k)\in B_j$. By construction, the set $B:=\bigcap_{j=1}^k B_j$ has full Lebesgue measure in $\mathbb{R}^k$, and for all $(a_1,\dots,a_k)\in B$ it then holds that 
$$(y_1,\dots,y_m)+ \sum_{i=1}^{k}a_i(e_i,\dots, e_i) = \Bigg(y_1 + \sum_{i=1}^k a_i e_i,\dots, y_m + \sum_{i=1}^k a_i e_i\Bigg) \in \bm{\mathcal{Y}}, $$
which completes the proof.

\end{proof}

In our proof of Theorem \ref{thm:2}, we will utilize the generalized Takens theorem (Theorem \ref{thm:takens}) to conclude that each $\Psi_{(y_i,T)}^{(m)}$ is injective on the compact set $\text{supp}(\mu)$, for each $1\leq i \leq m$. We would also like to use the generalized Whitney theorem (Theorem~\ref{thm:whitney}) to conclude that $Y=(y_1,\dots, y_m)$ is injective on $\text{supp}(\mu)$. However, in the generalized Whitney theorem it is assumed that $Y\in C^1(\mathbb{R}^n,\mathbb{R}^m)$, whereas in our statement of Theorem~\ref{thm:2} we have $Y\in C^1(U,\mathbb{R}^m)$, for an arbitrary open set $U \supseteq \text{supp}(\mu)$. The following lemma leverages the Whitney extension theorem (see \cite{whitney1992analytic}) to provide a reformulation of the generalized Whitney embedding theorem in this setting.
\begin{lemma}\label{lemma:extend}
    Let $A\subseteq U \subseteq \mathbb{R}^n,$ where $A$ is compact and $U$ is open, set $d:=\textnormal{boxdim}(A)$, and let $m > 2d$ be an integer. For almost every smooth map $F\in C^1(U,\mathbb{R}^m)$, it holds that $F$ is one-to-one on $A$ and an immersion on each compact subset of a smooth manifold contained in $A$. 
\end{lemma}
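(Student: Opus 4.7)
The plan is to reduce the local statement on $U$ to the global Sauer--Yorke--Casdagli embedding theorem on $\mathbb{R}^n$ (which strengthens Theorem~\ref{thm:whitney} to also guarantee the immersion property on compact subsets of smooth submanifolds of $A$) by means of a smooth cutoff extension. Since $A \subseteq U$ is compact and $U$ is open, I would first fix a smooth bump function $\chi \in C_c^\infty(U)$ with $\chi \equiv 1$ on some open neighborhood $V \supseteq A$ satisfying $\overline{V} \subseteq U$. The multiplication map $F \mapsto \chi F$, with $\chi F$ extended by zero outside $U$, then defines a continuous linear embedding $C^1(U,\mathbb{R}^m) \hookrightarrow C^1(\mathbb{R}^n,\mathbb{R}^m)$ that preserves both the values and the first derivatives of $F$ on the neighborhood $V$ of $A$.

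Next I would invoke the global embedding theorem, which provides a finite-dimensional probe space $E \subseteq C^1(\mathbb{R}^n,\mathbb{R}^m)$ (concretely realized by polynomial maps of sufficiently high degree as in~\cite{sauer1991embedology}) with the property that for every $G \in C^1(\mathbb{R}^n,\mathbb{R}^m)$ and Lebesgue-almost every $p \in E$, the sum $G + p$ is injective on $A$ and an immersion on each compact subset of a smooth submanifold contained in $A$. Setting $\tilde{E} := \{\,p|_U : p \in E\,\} \subseteq C^1(U,\mathbb{R}^m)$, which inherits the dimension of $E$ since polynomials restricted to the open set $U$ remain linearly independent, I would then apply the global result to $G = \chi F$ for arbitrary $F \in C^1(U,\mathbb{R}^m)$. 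Because $\chi F$ and $F$ agree together with their derivatives on the neighborhood $V$ of $A$, the conclusions on $A$ transfer verbatim to $F + p|_U$, so that $\tilde{E}$ serves as a probe space witnessing prevalence of the desired property in $C^1(U,\mathbb{R}^m)$.

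The main obstacle is ensuring that the immersion condition is preserved under the cutoff modification, since multiplication by $\chi$ generally alters derivatives outside $V$. The resolution is to observe that both the injectivity and immersion conditions are evaluated only on the set $A$, which lies entirely inside $V$ where $\chi$ is identically one and $D\chi$ vanishes; by the Leibniz rule this yields equality of all $1$-jets of $\chi F$ and $F$ on $A$. With this observation, injectivity and the immersion property can be handled simultaneously, and Lemma~\ref{lemma:prevalence} allows one to intersect the two prevalent sets if they are established separately. As hinted in the statement, this construction is morally an application of the linear Whitney extension operator $C^1(A,\mathbb{R}^m) \to C^1(\mathbb{R}^n,\mathbb{R}^m)$, which gives a cleaner alternative route to the same conclusion without an explicit choice of bump function.
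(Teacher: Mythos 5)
Your proof is correct and takes a somewhat different route from the paper. The paper invokes the Whitney extension theorem to produce $\tilde{F}\in C^1(\mathbb{R}^n,\mathbb{R}^m)$ with $\tilde{F}|_A = F|_A$, whereas you instead construct the extension by hand via a compactly supported bump function $\chi$ equal to $1$ on a neighborhood $V\supseteq A$. This is more elementary — it avoids citing the Whitney extension theorem — and it is actually more careful than the paper on a crucial point: the lemma asserts both injectivity \emph{and} the immersion property on $A$, and the latter requires agreement of $1$-jets on $A$, not merely of values. Your argument (that $D\chi$ vanishes on $V$, so $\chi F$ and $F$ have identical $1$-jets on $A$ by the Leibniz rule) makes that agreement explicit, while the paper's proof only records that $\tilde{F}|_A = F|_A$ and concludes injectivity, leaving the derivative matching and the immersion transfer implicit. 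You also identify the Whitney extension operator as the alternative route — which is precisely the one the paper takes — so you have essentially captured both.

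One small inaccuracy worth flagging: the probe space for the fractal Whitney theorem (Theorem 2.3 of Sauer–Yorke–Casdagli) is the space of linear maps $\mathcal{L}(\mathbb{R}^n,\mathbb{R}^m)$, not polynomial maps of high degree; polynomial probes arise in the fractal Takens theorem where one must control periodic orbits. This does not affect the validity of your argument — restricting either probe space to $U$ preserves its dimension and the rest goes through unchanged — but the citation should refer to linear maps to match what the embedding theorem actually provides.
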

\begin{proof}
    By Theorem \ref{thm:whitney}, the result holds when $U = \mathbb{R}^n$, and by \cite{sauer1991embedology} a suitable probe is given by space of linear maps between $\mathbb{R}^n$ and $\mathbb{R}^m$. Hereafter, we will denote this space by $\mathcal{L}(\mathbb{R}^n,\mathbb{R}^m)$. In the case when $U \neq \mathbb{R}^n$, we claim that the restricted space of linear maps $E:=\{ L|_U: L\in \mathcal{L}(\mathbb{R}^n,\mathbb{R}^m)\} $ is a suitable probe. We remark that if $W:=\{ L_1,\dots, L_{mn}\}$ is a basis for $\mathcal{L}(\mathbb{R}^n,\mathbb{R}^m)$, which is $nm$-dimensional, then $\hat{W}:=\{L_1|_U,\dots, L_{mn}|_U\}$ is a basis for $E$, which remains $nm$-dimensional. 
    
    To verify that $\hat{W}$ is a basis, let  $\hat{L}\in E$, and note that $\hat{L} = L|_U$ for some $L\in \mathcal{L}(\mathbb{R}^n,\mathbb{R}^m)$. Since $W$ is a basis for $\mathcal{L}(\mathbb{R}^n,\mathbb{R}^m)$, we can write $L = \sum_{i=1}^{nm} a_i L_i$ for some coefficients $a_i\in\mathbb{R}$, $1\leq i \leq nm$, and thus $\hat{L} = \sum_{i=1}^{nm}a_iL_i|_U.$ Therefore, $\hat{W}$ spans $E$. To see that the elements of $\hat{W}$ are linearly dependent, assume that $\sum_{i=1}^{nm}a_iL_i|_U \equiv 0 \in \mathbb{R}^m $ for some coefficients $a_i\in\mathbb{R}$ with $1\leq i \leq nm$. By linearity and the fact that $U$ is open, this implies $\sum_{i=1}^{nm}a_iL_i \equiv 0 \in \mathbb{R}^m $, and since $\{L_i\}_{i=1}^{nm}$ forms a basis, this implies $a_i = 0$ for each $1\leq i \leq nm$.


    To complete the proof, it remains to show that $E$ is a probe space. Towards this, let $F\in C^1(U,\mathbb{R}^m)$ be fixed, and note that by the Whitney Extension Theorem there exists $\tilde{F}\in C^1(\mathbb{R}^n,\mathbb{R}^m)$, such that $\tilde{F}|_A = F|_A$; see \cite{whitney1992analytic}. Then, by Theorem \ref{thm:whitney}, it holds that $\tilde{F}+\sum_{i=1}^{nm}a_i L_i$ is injective on $A$ for Lebesgue almost all $(a_1,\dots, a_{nm})\in \mathbb{R}^{nm}$. Since $\tilde{F}|_A = F|_A$, it follows that $F+\sum_{i=1}^{nm}a_iL_{i}|_U$ is injective on $A$ for Lebesgue almost all $(a_1,\dots, a_{nm})\in \mathbb{R}^{nm}$, which completes the proof.
\end{proof}

We are now ready to present the proof of Theorem \ref{thm:2}. 
\begin{proof}[Proof of Theorem \ref{thm:2}]
By Theorem \ref{thm:1}, the set of $y \in C^1(U,\mathbb{R})$ such that the equality of measures $\hat{\mu}_{(y,S)}^{(m+1)}=\hat{\mu}_{(y,T)}^{(m+1)}$ implies the topological conjugacy of $T|_{\text{supp}(\mu)}$ and $S|_{\text{supp}(\mu)}$ is prevalent. We denote this set by $\mathcal{Y}$. Moreover, by Lemma~\ref{lemma:augment} it holds that the set $\bm{\mathcal{Y}}:=\{(y_1,\dots, y_m): y_i \in \mathcal{Y}\}$ is prevalent in $C^1(U,\mathbb{R}^m).$ By Lemma~\ref{lemma:extend}, it follows that the set $\bm{\mathcal{Z}}\subseteq C^1(U,\mathbb{R}^m)$ of smooth maps that are injective on $\text{supp}(\mu)$ is also prevalent. 
 Since the finite intersection of prevalent sets is prevalent (see Lemma \ref{lemma:prevalence}) it holds that $\bm{\mathcal{W}}:=\bm{\mathcal{Y}}\cap \bm{\mathcal{Z}}$ is a prevalent subset of $C^1(U,\mathbb{R}^m)$. 

We now fix an  element $Y = (y_1,\dots, y_m)\in \bm{\mathcal{W}}$. Note that by the conclusion of Theorem \ref{thm:1}, we have 
    \begin{equation}\label{eq:supps}
        S|_{\textup{supp}(\mu)} = (\Theta_{y_i}^{-1} \circ T|_{\text{supp}(\mu)}\circ \Theta_{y_i}),\qquad 1\leq i \leq m,
    \end{equation}
    where $\Theta_{y_i}:\text{supp}(\mu)\to \text{supp}(\mu)$ is defined in \eqref{eq:theta}. Evaluating \eqref{eq:supps} at $x^*\in B_{\mu,T}\cap \text{supp}(\mu)$ and composing on both sides then yields 
        \begin{equation*}\label{eq:supps2}
        S|_{\textup{supp}(\mu)}^k(x^*) = (\Theta_{y_i}^{-1} \circ T|_{\text{supp}(\mu)}^k\circ \Theta_{y_i})(x^*),\qquad k\in \mathbb{N},\qquad 1\leq i \leq m.
    \end{equation*}
    Then, since $S^k(x^*) = T^k(x^*)$ for $0\leq k \leq m-1$, it holds that $\Theta_{y_i}(x^*) = x^*$ for $1\leq i \leq m$, which is a consequence of the definition of the delay map~\eqref{eq:delay_map} and the construction of $\Theta_{y_i}$; see \eqref{eq:theta}.  
    Therefore,
    \begin{equation}\label{eq:supps3}
        S|_{\textup{supp}(\mu)}^k(x^*) = (\Theta_{y_i} ^{-1}\circ T|_{\text{supp}(\mu)}^k)(x^*),\qquad k\in \mathbb{N},\qquad 1\leq i \leq m.
    \end{equation}
    Using the definition of $\Theta_{y_i}$, we now rearrange \eqref{eq:supps3} to find that 
     \begin{equation}\label{eq:supps4}
        \Psi_{(y_i,S)}^{(m)}(S|_{\textup{supp}(\mu)}^k(x^*)) = \Psi_{(y_i,T)}^{(m)}(T|_{\textup{supp}(\mu)}^k(x^*)),\qquad k\in \mathbb{N},\qquad 1\leq i \leq m.
    \end{equation}
    Again, using the definition of the delay map and equating the first components of the vectors in \eqref{eq:supps4} reveals that $y_i(S^k(x^*)) = y_i(T^k(x^*))$ for all $i=1,\ldots, m$ and any $k\in\mathbb{N}.$ Recall that $Y 
 = (y_1,\ldots,y_m)\in \bm{\mathcal{W}} = \bm{\mathcal{Y}} \cap \bm{\mathcal{Z}}$ is injective.  
 Thus, $S^k(x^*) = T^k(x^*)$ for all $k\in \mathbb{N}$. The conclusion then follows from Lemma~\ref{lemma:7}, which completes the proof.
\end{proof}

 \end{document}